\documentclass{amsart}

\usepackage[margin=1in]{geometry}

\usepackage{tikz}
\usetikzlibrary{shapes}

\newcommand{\stkout}[1]{\ifmmode\text{\sout{\ensuremath{#1}}}\else\sout{#1}\fi}

\def\C{\mathbb{C}}

\def\R{\mathbb{R}}
\def\X{\mbf{X}}
\def\x{\mbf{x}}

\def\Y{\mbf{Y}}
\def\y{\mbf{y}}


\def\z{\mbf{z}}

\def\b{\mbf{b}}

\def\m{\mbf{m}}


\usepackage{color}
\usepackage[capitalize,nameinlink,noabbrev,nosort]{cleveref}
\usepackage{graphicx}



\def\defeq{\mathrel{\mathop:}=}

\newcommand{\w}{\widetilde}

\newcommand{\ZZ}{ \mathbb{Z} }
\newcommand{\RR}{ \mathbb{R} }

\newtheorem{thm}{Theorem}[section]
\newtheorem{theorem}[thm]{Theorem}
\newtheorem{lemma}[thm]{Lemma}
\newtheorem{lem}[thm]{Lemma}
\newtheorem{prop}[thm]{Proposition}

\newtheorem{defn}[thm]{Definition}
\newtheorem{dfn}[thm]{Definition}

\newtheorem{conj}[thm]{Conjecture}

\newtheorem{remark}[thm]{Remark}

\newcommand\cell[3]{
\def\i{#1} \def\j{#2} \def\entry{#3}
\draw (\j-1,-\i)--(\j,-\i)--(\j,-\i+1);
\node at (\j-.5,-\i+.5) {\entry};
}

\newcommand\BLHLL[2]{
  \def\X{#1} \def\Y{#2}
  \foreach \i in {0,...,\X}
  {
\pgfmathsetmacro{\m}{\Y-1/(\i+1)};
    \draw[gray,thick] (\i,0) -- (\i,\m);
  }
  \foreach \j in {0,...,\Y}
  {
  }
\pgfmathsetmacro{\m}{\Y-1};
  \foreach \x in {1,...,\X}
{ \foreach \j in {0,...,\m}
  {
\draw[gray,thick] (\x-1,\j) -- (\x,\j);
}}
 \foreach \i in {2,...,\X}
 \foreach \j in {0,...,\m} 
{\foreach \y in {2,...,\i}
{
\pgfmathsetmacro{\w}{\j+(\y-1)/\i};
\pgfmathsetmacro{\z}{\j+(\y-1)/(\i+1)};
\draw[gray,thick] (\i-1,\w) -- (\i,\z);
}}
}

\newcommand\LHLL[2]{
  \def\X{#1} \def\Y{#2}
  \foreach \i in {0,...,\X}
  {
\pgfmathsetmacro{\m}{\Y-1/(\i+1)};
    \draw[gray,thick] (\i,0) -- (\i,\m);
    \node at (\i,-.3) {\i};
  }
  \foreach \j in {0,...,\Y}
  {
    \node at (-.3,\j) {\j};
  }
\pgfmathsetmacro{\m}{\Y-1};
  \foreach \x in {1,...,\X}
{ \foreach \j in {0,...,\m}
  {
\draw[gray,thick] (\x-1,\j) -- (\x,\j);
}}
 \foreach \i in {2,...,\X}
 \foreach \j in {0,...,\m} 
{\foreach \y in {2,...,\i}
{
\pgfmathsetmacro{\w}{\j+(\y-1)/\i};
\pgfmathsetmacro{\z}{\j+(\y-1)/(\i+1)};
\draw[gray,thick] (\i-1,\w) -- (\i,\z);
}}
\node at (\X+1,1) {$\cdots$};
\node at (\X+1,\Y-1) {$\cdots$};
}

\newcommand\DLL[2]{
  \def\X{#1} \def\Y{#2}
  \foreach \i in {0,...,\X}
  {
\pgfmathsetmacro{\m}{\Y-1/(\i+1)};
  }
  \foreach \j in {0,...,\Y}
  {
  }
\pgfmathsetmacro{\m}{\Y-1};
  \foreach \x in {1,...,\X}
{ \foreach \j in {0,...,\m}
  {
\node at (\x,\j){$\circ$};
\node at (\x-.1,\j+.1){$\bullet$};
\draw[gray,thick] (\x-1,\j) -- (\x-.1,\j+.1);
}}
  
 \foreach \j in {0,...,\m}
  {
\node at (0,\j){$\circ$};
\node at (0-.1,\j+.1){$\bullet$};
\draw[gray,thick] (0,\j) -- (-.1,\j+.1);
}

\foreach \i in {1,...,\X}
 \foreach \j in {0,...,\m} 
{\foreach \y in {1,...,\i}
{
\pgfmathsetmacro{\w}{\j+(\y)/(\i+1)};
\node at (\i,\w){$\circ$};
\node at (\i-.1,\w+.1){$\bullet$};
\draw[gray,thick]  (\i,\w)-- (\i-.1,\w+.1);
}}
\pgfmathsetmacro{\v}{\m-1};
\foreach \i in {1,...,\X}
 \foreach \j in {0,...,\v} 
{\foreach \y in {0,...,\i}
{
\pgfmathsetmacro{\w}{\j+(\y)/(\i+1)};
\pgfmathsetmacro{\q}{\j+(\y+1)/(\i+1)};
\draw[gray,thick]  (\i-.1,\w+.1)-- (\i,\q);
}}
\foreach \i in {1,...,\X}
 \foreach \j in {\m,...,\m} 
{\foreach \y in {1,...,\i}
{
\pgfmathsetmacro{\w}{\j+(\y-1)/(\i+1)};
\pgfmathsetmacro{\q}{\j+(\y)/(\i+1)};
\draw[gray,thick]  (\i-.1,\w+.1)-- (\i,\q);
}}
\foreach \i in {0,...,0}
 \foreach \j in {1,...,\m} 
{
{
\draw[gray,thick]  (\i-.1,\j-1+.1)-- (\i,\j);
}}

 \foreach \i in {2,...,\X}
 \foreach \j in {0,...,\m} 
{\foreach \y in {2,...,\i}
{

\pgfmathsetmacro{\w}{\j+(\y-1)/\i};
\pgfmathsetmacro{\z}{\j+(\y-1)/(\i+1)};
\draw[gray,thick] (\i-1,\w) -- (\i-.1,\z+.1);
}}
}

\newcommand\SHLL[2]{
  \def\X{#1} \def\Y{#2}
  \foreach \i in {0,...,\X}
  {
\pgfmathsetmacro{\m}{\Y-1/(\i+1)};
    \draw[gray,thick] (\i,0) -- (\i,\m);
    \node at (\i,-.3) {\i};
  }
  \foreach \j in {0,...,\Y}
  {
    \node at (-.3,\j) {\j};
  }
\pgfmathsetmacro{\m}{\Y-1};
  \foreach \x in {1,...,\X}
{ \foreach \j in {0,...,\m}
  {
}}
 \foreach \i in {1,...,\X}
 \foreach \j in {0,...,\m} 
{\foreach \y in {1,...,\i}
{
\pgfmathsetmacro{\w}{\j+(\y-1)/\i};
\pgfmathsetmacro{\z}{\j+(\y)/(\i+1)};
\draw[gray,thick] (\i-1,\w) -- (\i,\z);
}}
\node at (\X+1,1) {$\cdots$};
\node at (\X+1,\Y-1) {$\cdots$};
}

\newtheoremstyle{definition}
{3pt} 
{3pt} 
{} 
{} 
{\bfseries} 
{.} 
{.5em} 
{} 


\title{Arctic curves phenomena for bounded lecture hall Tableaux}

\date{\today}
\author{Sylvie Corteel}
\address{Department of Mathematics, UC Berkeley, USA}
\email{corteel@berkeley.edu}

\author{David Keating}
\address{Department of Mathematics, UC Berkeley, USA}
\email{dkeating@berkeley.edu}

\author{Matthew Nicoletti}
\address{Department of Mathematics, MIT, USA}
\email{mnicolet@mit.edu}

\begin{document}

\begin{abstract}
 Recently the first author and Jang Soo Kim introduced lecture hall tableaux in their study of multivariate
little $q$-Jacobi polynomials. They then enumerated bounded lecture hall tableaux and showed
that their enumeration is closely related to standard and semistandard Young tableaux. In this paper we 
study the asymptotic behavior of these bounded tableaux thanks to two other combinatorial models:
non-intersecting paths on a graph whose faces are squares and pentagons
and dimer models on a lattice whose faces are hexagons and octagons.
We use the tangent method to investigate the arctic curve in the model of non-intersecting lattice paths with 
fixed starting points and ending points distributed according to some arbitrary piecewise differentiable 
function. We then study the dimer model and use an ansatz to guess the asymptotics 
of the inverse of the Kasteleyn, which
confirm the arctic curve computed with the tangent method for two examples.
\end{abstract}

\maketitle

\section{Introduction}
\label{intro}

Recently the first author and Jang Soo Kim introduced lecture hall tableaux in their study of multivariate
little $q$-Jacobi polynomials \cite{CK}. They then enumerated bounded lecture hall tableaux and showed
that their enumeration is closely related to standard and semistandard Young tableaux \cite{CK2}.

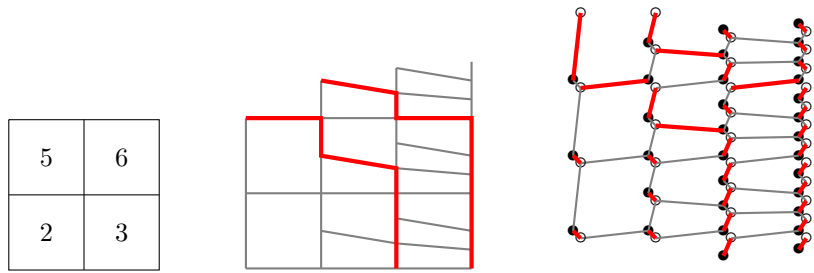
\begin{figure}
  \centering
\begin{tikzpicture}
\draw(0,0)--(2,0)--(2,-2)--(0,-2)--(0,0);
\draw(0,-1)--(2,-1);
\draw(1,-2)--(1,0);
\node at (0.5,-0.5){5};
\node at (1.5,-0.5){6};
\node at (1.5,-1.5){3};
\node at (0.5,-1.5){2};
\end{tikzpicture}\hspace{1cm}
\begin{tikzpicture}
\BLHLL{3}3
\draw [red,ultra thick](0,2)--(1,2)--(1,3/2)--(2,4/3)--(2,0);
\draw [red,ultra thick](1,5/2)--(2,7/3)--(2,2)--(3,2)--(3,0);
\end{tikzpicture}\hspace{1cm}
\begin{tikzpicture}
\DLL{3}3
\draw [red,ultra thick](0,2)--(1-.1,2.1);
\draw [red,ultra thick](1,2)--(1-.1,1.6);;
\draw [red,ultra thick](1,1.5)--(2-.1,4/3+.1);;
\draw [red,ultra thick](2,4/3)--(2-.1,1+.1);;
\draw [red,ultra thick](2,1)--(2-.1,2/3+.1);;
\draw [red,ultra thick](2,2/3)--(2-.1,.1+1/3);;
\draw [red,ultra thick](2,1/3)--(2-.1,.1);;
\draw [red,ultra thick](1,5/2)--(2-.1,7/3+.1);
\draw [red,ultra thick](2,7/3)--(2-.1,6/3+.1);
\draw [red,ultra thick](2,6/3)--(3-.1,6/3+.1);
\draw [red,ultra thick](3,8/4)--(3-.1,7/4+.1);
\draw [red,ultra thick](3,7/4)--(3-.1,6/4+.1);
\draw [red,ultra thick](3,6/4)--(3-.1,5/4+.1);
\draw [red,ultra thick](3,5/4)--(3-.1,4/4+.1);;
\draw [red,ultra thick](3,4/4)--(3-.1,3/4+.1);;
\draw [red,ultra thick](3,3/4)--(3-.1,2/4+.1);;
\draw [red,ultra thick](3,2/4)--(3-.1,1/4+.1);;
\draw [red,ultra thick](3,1/4)--(3-.1,0/4+.1);;
\draw [red,ultra thick](0,1)--(0-.1,1.1);
\draw [red,ultra thick](0,0)--(0-.1,0.1);
\draw [red,ultra thick](1,0)--(1-.1,0.1);
\draw [red,ultra thick](1,0.5)--(1-.1,0.6);
\draw [red,ultra thick](1,1)--(1-.1,1.1);
\draw [red,ultra thick](2,5/3)--(2-.1,5/3+.1);
\draw [red,ultra thick](2,8/3)--(2-.1,8/3+.1);
\draw [red,ultra thick](3,2+1/4)--(3-.1,2+.1+1/4);
\draw [red,ultra thick](3,2+2/4)--(3-.1,2+.1+2/4);
\draw [red,ultra thick](3,2+3/4)--(3-.1,2+.1+3/4);
\node at (0,3){$\circ$};
\node at (1,3){$\circ$};
\node at (2-.1,-1/3+.1){$\bullet$};
\node at (3-.1,-1/4+.1){$\bullet$};
\draw [red,ultra thick](-.1,2.1)--(0,3);
\draw [red,ultra thick](1-.1,2.6)--(1,3);
\draw [red,ultra thick](2-.1,-1/3+.1)--(2,0);
\draw [red,ultra thick](3-.1,-1/4+.1)--(3,0);
\end{tikzpicture}
\caption{Tableau, non-intersecting paths, and dimers}
\label{tabpathdim}
\end{figure}


Given a positive integer $t$ and a partition $\lambda=(\lambda_1,\ldots ,\lambda_n)$
with $\lambda_1\ge \ldots \ge \lambda_n\ge 0$, the bounded lecture hall tableaux
are  fillings of the diagram of $\lambda$ with integers $T_{i,j}$ such that
\begin{enumerate}
\item $T_{i,j}<t(n-i+j)$
\item $T_{i,j}/(n-i+j)\ge T_{i,j+1}/(n-i+j+1)$
\item $T_{i,j}/(n-i+j)> T_{i+1,j}/(n-i-1+j)$
\end{enumerate}
We call them bounded lecture hall tableaux (BLHT)
of shape $\lambda$ and bounded by $t$.
On the left of Figure \ref{tabpathdim}, we give an example of such a tableau for $t=3$
and $\lambda=(2,2)$.
 In this paper we 
study the asymptotic behavior of these bounded tableaux thanks to two other combinatorial models:
the non-intersecting paths on a graph whose faces are squares and pentagons
and the dimer models on a lattice whose faces are hexagons and octagons.
An example of the path model and the dimer model is given on the middle and the right of Figure \ref{tabpathdim}.
Detailed definitions will be given in Section \ref{combi}.


One special quality of this model is that the number of configurations is relatively 
easy to compute \cite{CK2}.
Given $t,n$ and $\lambda=(\lambda_1,\ldots ,\lambda_n)$, the number $Z_\lambda(t)$ 
of bounded lecture hall tableaux
of shape $\lambda$ bounded by $t$ is
\begin{equation*}
Z_\lambda(t)=t^{|\lambda|}\prod_{1\le i<j\le n}\frac{\lambda_i-i-\lambda_j+j}{j-i},
\end{equation*}
where $|\lambda|=\lambda_1+\ldots +\lambda_n$.

Our main interest here is to compute their asymptotic behavior.
Given $n\gg 0$ and a function $\alpha:[0,1]\rightarrow [0,k]$ with $k\in\mathbb {R}$ satisfying some conditions
that will be given in a later section
and $\tau\in \mathbb {R}$, our  main question is to understand the asymptotic
behavior of bounded lecture hall tableaux of shape $\lambda=(\lambda_1,\ldots ,\lambda_n)$ with $\lambda_i+n-i=\lfloor n\alpha\left(\frac{i}{n}\right)\rfloor$
bounded by $t=\tau n$.
The function $\alpha$ describes the limiting profile of $\lambda$.

In this paper, we will detail two examples 
\begin{itemize}
\item The staircase: $\lambda=(n,n-1,\ldots ,2,1)$. In this case $\alpha(u)=2-2u$.
\item The square:  $\lambda=(n,n,\ldots ,n)$. In this case $\alpha(u)=2-u$.
\end{itemize}
We will also present a general result that computes a parameterization of the arctic curve in the general case.
\begin{thm} \label{thm:main}
Assuming the tangent method holds,
as $n\rightarrow \infty$, lecture hall tableaux of shape $\lambda=(\lambda_1,\ldots ,\lambda_n)$ 
bounded by $\tau n$ exhibit the arctic curve phenomenon. The arctic curve can be parameterized by
\begin{align}
\begin{split}
X(x) = \frac{x^2I'(x)}{I(x) + x I'(x)} \\
Y(x) = \tau \frac{1}{I(x) + x I'(x)}
\end{split}
\label{thm:main}
\end{align}
for an appropriate range of $x$. Here $I(x) = e^{-\int_0^1 du\; \frac{1}{x-\alpha(u)}}$
and  $\lambda_i+n-i=\lfloor n\alpha(i/n)\rfloor$.
\end{thm}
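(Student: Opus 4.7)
The plan is to implement the tangent method of Colomo and Sportiello in the non-intersecting lattice paths model of Section \ref{combi}. Under the bijection with BLHT, a shape $\lambda$ bounded by $\tau n$ corresponds to $n$ non-intersecting paths on the square-pentagon graph with fixed starting points on one boundary and ending points on the opposite boundary at heights prescribed by $\mu_i := \lambda_i + n - i = \lfloor n\alpha(i/n)\rfloor$. The explicit product formula $Z_\lambda(\tau n) = (\tau n)^{|\lambda|}\prod_{i<j}(\mu_i-\mu_j)/(j-i)$ recalled above is the essential combinatorial input.

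To apply the tangent method, I would free the last path by removing one of its endpoints and requiring instead that the path exit the lattice at some prescribed position and then continue as a straight external segment to a distant fixed point, with the parameter $x$ tracking either the exit position on the axis or equivalently the slope of the external segment. Writing $Z_\lambda^{(x)}(\tau n)$ for the resulting partition function, one uses the Lindström--Gessel--Viennot lemma together with the product formula to express $Z_\lambda^{(x)}/Z_\lambda$ as a ratio of product formulas $Z_{\lambda'}/Z_\lambda$ (where $\lambda'$ is a partition with one part shifted in accordance with the freed endpoint) multiplied by the explicit generating function of a single directed walk on the square-pentagon graph. The product-formula ratio is what collects the dependence of the answer on the profile $\alpha$.

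In the scaling limit $n\to\infty$ with $\mu_j\approx n\alpha(j/n)$, the logarithm of $Z_{\lambda'}/Z_\lambda$ converges to a Riemann sum for
\begin{equation*}
n \int_0^1 \log\bigl(x-\alpha(u)\bigr)\, du,
\end{equation*}
whose derivative in the exit parameter $x$ is the logarithmic potential $\int_0^1 du/(x-\alpha(u)) = -\log I(x)$, explaining why the kernel $I(x)=\exp(-\int_0^1 du/(x-\alpha(u)))$ appears. The external straight segment contributes an additional explicit factor depending on $x$ and the exit height. Balancing the two contributions via saddle point analysis produces the identification $Y = \tau/(I(x)+xI'(x))$, and each value of $x$ thereby corresponds to a tangent line to the arctic curve through the exit point.

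The arctic curve is then recovered as the envelope of this one-parameter family: differentiating the tangent-line equation in $x$ and solving simultaneously with it yields the second coordinate $X(x)=x^{2}I'(x)/(I(x)+xI'(x))$ stated in the theorem. The main obstacle will be handling the singularity of the logarithmic kernel on the range $\alpha([0,1])$: the relevant expansions only make sense when $x$ lies in the complement of this range, and one must carefully verify that the saddle point equation has a unique admissible solution and that the limit exchange implicit in the Riemann sum approximation is legitimate for merely piecewise differentiable $\alpha$. A secondary subtlety is computing the exact combinatorial weight of the external straight segment on the pentagon-square lattice, which must be determined from the local transfer matrix before the saddle point equation can be written explicitly; this is also the step where the implicit assumption ``the tangent method holds'' enters, since identifying the most likely exit position with the actual tangent point to the arctic curve is not rigorously justified.
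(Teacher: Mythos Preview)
Your proposal is correct and follows essentially the same approach as the paper: free an extremal path, compute the ratio $Z_r/Z$ via the product formula, take the scaling limit to obtain the action with the logarithmic potential $\int_0^1\log(x-\alpha(u))\,du$, solve the saddle point to get the family of tangent lines $\frac{x}{\tau}I(x)Y+X-x=0$, and recover the parameterization as its envelope. Two points worth flagging: first, the paper verifies explicitly (Section~\ref{subsec:sp}) that a single path on the lecture hall graph has straight-line geodesics despite the graph not being translation-invariant, which is the content behind your ``external straight segment'' assumption and is not automatic here; second, the paper treats separately the first path, the last path, and the first and last \emph{dual} paths (as well as internal freezing boundaries) to cover all portions of the arctic curve, showing in each case that the same parameterization \eqref{thm:main} emerges for a different range of $x$---your sketch handles only one of these portions.
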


Even though this model seems more complicated than the typical systems 
coming from the square grid graphs \cite{DG}, we will discover that they have some 
surprising properties and lots of similarities with non-intersecting paths on the grid (or equivalently
dimer models on the hexagonal lattice, semi standard young tableaux or lozenge tilings). 
Numerous asymptotic results exist for non-intersecting paths, or equivalently tilings models 
or  dimer models on graphs that are regular and ${\mathbb Z}^2$ invariant \cite{K}.
The arctic curve phenomenon was named about twenty years ago when Cohn, Elkies and Propp
studied the tilings of a large Aztec diamond with dominoes \cite{CEP96, JPS98}.
Indeed the ``typical" tiling of the Aztec diamond with dominoes is known to display an arctic circle
separating frozen phases in the corners which are regularly tiled from a liquid phase in the
center which is disordered.
Many tiling problems of finite plane domains of large size are known to exhibit the same phenomenon.
Typically, one studies the asymptotics of tilings of scaled domains
whose limits can be nicely characterized. Dimer models on regular graphs, which are the
dual version of tiling problems, exhibit the same phenomenon \cite{KO06, KO07}. 
The general method to obtain the arctic curve location is the asymptotic study of bulk
expectation values, which requires the computation of the inverse of the Kasteleyn matrix
or at least its asymptotics. Other rigorous methods use for example the machinery of
cluster integrable systems of dimers \cite{DFSG14,KP,PS}. Recently several papers use the
method of Colomo and Sportiello \cite{CS,CPS} called the {\em tangent method} to
compute (non rigorously) the arctic curves  \cite{DG,DG1,DG2,DL,DR}.
A very recent preprint of Aggarwal builds a method to make this heuristic
rigorous in the case of the 6-vertex model \cite{A}.

As our model is not $\mathbb Z^2$ invariant we can not apply directly all the methods elaborated
for the  $\mathbb Z^2$ invariant models.
In Section \ref{combi}
we will define the path model, the dimer model and explain the connections between these models and bounded lecture hall tableaux.
In Section \ref{simu} we explain how we randomly generate the tableaux and
present some simulations. In Section \ref{tangent}
we use the tangent method to compute the arctic curve for any function $\alpha$. 
 Using the dimer model we compute (non rigorously) the arctic curve
for our two running examples using an ansatz to guess the asymptotic behavior 
of the inverse of the Kasteleyn matrix. This will be presented in Section \ref{dimer}.
We end this paper in Section \ref{conclu}
with open questions and concluding remarks.\\

{\bf Acknowledgments.} The authors want to thank Nicolai Reshetikhin, Ananth Sridhar and Andrea Sportiello
for their precious comments and advice during the elaboration of this work.
They also acknowledge the constructive comments of the referees that helped to
improve the quality of the manuscript.
SC was in residence at MSRI in Berkeley (NSF grant DMS-1440140) 
during the fall of 2018. SC is partially funded by the ANR grant ANR-18-CE40-0033
and the UC Berkeley start up funds. DK is partially supported by the NSF grant DMS-1902226 and the FRG grant DMS-1664521.

\section{Combinatorics and counting}
\label{combi}

In this section , we give definitions and basic properties of our three combinatorial models: the tableaux,
the path model and the dimer model.

\subsection{Lecture hall tableaux}

Lecture hall partitions were studied by Bousquet-M\'elou and Eriksson \cite{BME1,BME2,BME3} in the context of the combinatorics of affine Coxeter groups.
They are sequences $(T_1,\ldots ,T_n)$ such that
$$
\frac{T_1}{n}\ge \frac{T_2}{n-1}\ge \ldots \ge \frac{T_n}{1}\ge 0.
$$
They have been studied extensively in the last two decades. See the recent survey written by Savage \cite{LHPSavage}.  
In \cite{CK} the first author and Jang Soo Kim showed 
that these objects are closely related to the little $q$-Jacobi polynomials.
Thanks to this approach they defined lecture hall tableaux related to the multivariate  little $q$-Jacobi polynomials.

Given a partition $\lambda=(\lambda_1,\ldots ,\lambda_n)$ such that $\lambda_1\ge \ldots \ge \lambda_n\ge 0$,
the Young diagram of $\lambda$ is a left justified
union of cells such that the $i^{th}$ row contains $\lambda_i$ cells.
The cell in row $i$ and column $j$ is denoted by $(i,j)$.

\begin{dfn}\cite{CK}
For an integer $n$ and a partition $\lambda=(\lambda_1,\ldots ,\lambda_n)$ with $n$ non-negative parts, a \emph{lecture hall tableau} of shape $\lambda$ 
 is a filling $T$ of the cells in the Young diagram of $\lambda$ with non-negative integers satisfying the following conditions:
\[
\frac{T(i,j)}{n-i+j}  \ge \frac{T(i,j+1)}{n-i+j+1}, \qquad
\frac{T(i,j)}{n-i+j} > \frac{T(i+1,j)}{n-i-1+j}.
\]
where $T(i,j)$ is the filling of the cell in row $i$ and column $j$.
\end{dfn}
 See Figure~\ref{fig:LHT} for an example of a lecture hall tableau on the left of the Figure. 
On the right of the Figure, we show that this tableau is ``lecture hall" by exhibiting $T_{i,j}/(n-i+j)$ for all $i,j$.

\begin{figure}
  \centering
\begin{tikzpicture}[scale=.6]
\cell11{16} \cell12{16} \cell13{9} \cell144 
\cell21{12}\cell22{13} \cell236 
\cell312 
\draw (0,-3)--(0,0)--(4,0);
\end{tikzpicture} \qquad \qquad
\begin{tikzpicture}[scale=.6]
\cell11{$\frac{16}{5}$} \cell12{$\frac{16}{6}$} \cell13{$\frac97$} \cell14{$\frac48$} 
\cell21{$\frac{12}{4}$}\cell22{$\frac{13}{5}$} \cell23{$\frac{6}{6}$} 
\cell31{$\frac{2}{3}$} 
\draw (0,-3)--(0,0)--(4,0);
\end{tikzpicture}
  \caption{On the left is a lecture hall tableau $T$ for $n=5$ and $\lambda=(4,3,1,0,0)$. 
The diagram on the right shows the number $T(i,j)/(n-i+j)$.}
  \label{fig:LHT}
\end{figure}

In this paper we study lecture hall tableaux with an extra condition. We impose that each entry $T(i,j)$ is striclty
less then $t(n+j-i)$. We say that the tableau are bounded by $t$.
These tableaux are called {\em bounded} lecture hall tableaux and were enumerated  in \cite{CK2}.
\begin{prop}\cite{CK2}
Given $t,n$ and $\lambda$, the number of bounded lecture hall tableaux is
\begin{equation}
Z_\lambda(t)=t^{|\lambda|}\prod_{1\le i<j\le n}\frac{\lambda_i-i-\lambda_j+j}{j-i},
\end{equation}
where $|\lambda|=\lambda_1+\ldots +\lambda_n$.
\end{prop}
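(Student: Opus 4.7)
The plan is to follow the Lindström--Gessel--Viennot (LGV) paradigm and reduce the count to a Vandermonde-like determinant. First, I would invoke the bijection between bounded lecture hall tableaux of shape $\lambda$ and $n$-tuples of non-intersecting paths on the square-and-pentagon graph that will be described in Section \ref{combi}. Under this bijection, row $i$ of the tableau encodes path $i$: the denominator $n-i+j$ controls how many rungs are available at column $j$, the strict inequality $T(i,j)/(n-i+j) > T(i+1,j)/(n-i-1+j)$ translates into the non-intersection condition, and the bound $T(i,j) < t(n-i+j)$ confines every path to a vertical strip of height $t$. The $i$-th source is a fixed point depending only on $n$, while the $i$-th sink is determined by $\lambda_i + n - i$.

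Second, by LGV the count $Z_\lambda(t)$ equals $\det(P_{ij})_{i,j=1}^n$, where $P_{ij}$ enumerates weighted single (possibly intersecting) paths from source $i$ to sink $j$ under the step weights induced by the model. Because each horizontal step within a strip of height $t$ contributes a multiplicative factor of $t$ while vertical steps are free, $P_{ij}$ factors as an explicit $t$-power times a binomial polynomial in $x_j := \lambda_j - j$ of degree at most $n-i$.

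Third, I would evaluate the determinant. Pulling the scalar $t$-factors out of the columns — and observing that their total exponent is $\sum_j \lambda_j = |\lambda|$ — produces the prefactor $t^{|\lambda|}$. The residual matrix has entries polynomial in $x_j$, and column-reducing against the monomial basis in $x_j$ collapses it to a pure Vandermonde $\prod_{i<j}(x_i - x_j)$; dividing by the normalization $\prod_{i<j}(j-i) = \prod_{i<j}((-i) - (-j))$ coming from the change of basis yields exactly $\prod_{i<j}(\lambda_i - i - \lambda_j + j)/(j-i)$, as claimed. The step I expect to be most delicate is the third: identifying $P_{ij}$ in a form clean enough that the residual determinant is manifestly a Vandermonde, with no correction terms arising from the non-uniform $1/(n-i+j)$ rung spacings inherent to the lecture hall conditions. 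Since the full derivation already appears in \cite{CK2}, my proposal here is really a sketch of the conceptual route rather than a replacement.
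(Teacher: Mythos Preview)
Your proposal is correct and follows the same route as the paper: bijection to non-intersecting paths, LGV, the explicit single-path count $P_{ij}=t^{\lambda_j-j+i}\binom{\lambda_j+n-j}{n-i}$, and evaluation of the resulting determinant as a Vandermonde (the paper phrases this last step as induction on $n$, but it is the same computation). Two small slips worth fixing: the $t$-exponent $\lambda_j-j+i$ depends on both indices, so you must pull powers from rows as well as columns (the total is still $|\lambda|$); and since the degree of $\binom{x_j+n}{n-i}$ in $x_j$ varies with the \emph{row} index $i$, it is row reduction, not column reduction, that collapses the residual matrix to the monomial Vandermonde.
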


\subsection{Paths on the lecture hall graph and the dual graph}

In this Section we give a bijection between bounded lecture hall tableaux and non-intersecting paths on a graph.
Let us give a detailed definition of our graph.
\begin{dfn}
Given a positive integer $t$, the {\em lecture hall graph}
is a graph ${\mathcal G}_t=(V_t,E_t)$. This graph is conveniently described through an embedding
in the plane, in which the vertices are:
\begin{itemize}
\item $(i,j/(i+1))$ for $i\ge 0$ and $0\le j<t(i+1)$.
\end{itemize}
and the directed edges are
\begin{itemize}
\item from $(i,k+r/(i+1))$ to  $(i+1,k+r/(i+2))$  for $i\ge 0$ and $0\le r\le i$ and $0\le k<t$.
\item from $(i,k+(r+1)/(i+1))$ to $(i,k+r/(i+1))$ for $i\ge 0$ and $0\le r\le i$ and $0\le k<t-1$
or for $i\ge 0$ and $0\le r< i$ and $k=t-1$.
\end{itemize}
\end{dfn}
This graph was defined in \cite{CK2}.
An example of the graph ${\mathcal G}_3$ is given on Figure \ref{t3}.
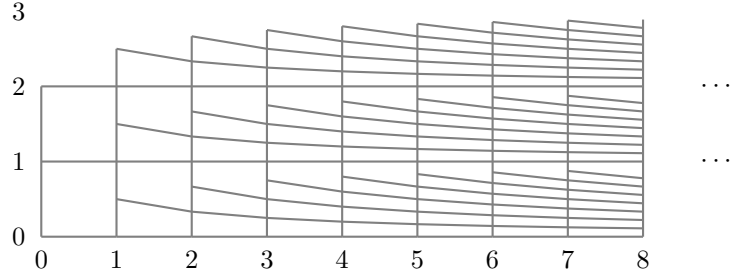
\begin{figure}
  \centering
\begin{tikzpicture}
\LHLL{8}3
\end{tikzpicture}
\caption{The graph ${\mathcal G}_t$ for $t=3$}
\label{t3}
\end{figure}

Given a positive integer $t$ and a partition $\lambda=(\lambda_1,\ldots ,\lambda_n)$
with $\lambda_1\ge \ldots \ge \lambda_n\ge 0$,
the non-intersecting path system is a system of $n$ paths on the graph ${\mathcal G}_t$. The $i^{th}$ path
starts at $(n-i,t-1/(n-i+1))$ and ends at $(\lambda_i+n-i,0)$. The paths are said to be not intersecting if they 
do not share a vertex.
On Figure \ref{fig:t4} we give an example of non-intersecting paths on ${\mathcal G}_4$
for $n=5$ and $\lambda=(4,3,1,0,0)$.
Note that the paths are on a finite portion of  ${\mathcal G}_t$ and we can delete all the vertices
$(x,y)$ with $x>\lambda_1+n-1$.

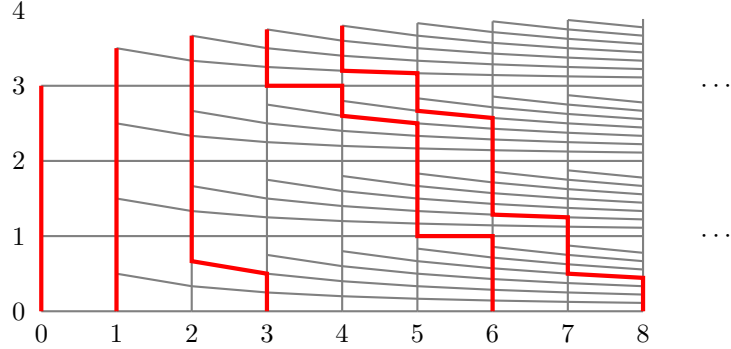
\begin{figure}
  \centering
\begin{tikzpicture}
\LHLL{8}4
\draw [red,ultra thick] (0,3)--(0,0);
\draw [red,ultra thick](1,7/2)--(1,0);
\draw [red,ultra thick](2,11/3)--(2,2/3)--(3,2/4)--(3,0);
\draw [red,ultra thick](3,15/4)--(3,12/4)--(4,15/5)--(4,13/5)--(5,15/6)--(5,6/6)--(6,1)--(6,0);
\draw [red,ultra thick] (4,19/5) -- (4,16/5) -- (5,19/6) -- (5,16/6) -- (6,18/7) -- (6,9/7) -- (7,10/8)-- (7,4/8) -- (8,4/9) -- (8,0);
\end{tikzpicture}
\caption{non-intersecting lattice paths on ${\mathcal G}_4$
for $n=5$ and $\lambda=(4,3,1,0,0)$.}
\label{fig:t4}
\end{figure}

We give a sketch of the proof of their enumeration:
\begin{theorem}\cite{CK2}
Given $t,n$ and $\lambda$, the number $Z_\lambda(t)$ of non-intersecting path configurations that
start at $(n-i,t-1/(n-i+1))$ and end at $(\lambda_i+n-i,0)$ for $i=1,\ldots ,n$
is
\begin{equation*}
t^{|\lambda|}\prod_{1\le i<j\le n}\frac{\lambda_i-i-\lambda_j+j}{j-i},
\end{equation*}
where $|\lambda|=\lambda_1+\ldots +\lambda_n$.
\end{theorem}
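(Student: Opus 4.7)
The plan is to construct a bijection between bounded lecture hall tableaux of shape $\lambda$ bounded by $t$ and non-intersecting path systems on $\mathcal{G}_t$ with the stated sources and sinks, and then invoke the preceding proposition, which gives the enumeration $t^{|\lambda|} \prod_{1 \le i < j \le n}(\lambda_i - i - \lambda_j + j)/(j - i)$.

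The bijection sends the $i$-th row $T(i,1), \ldots, T(i,\lambda_i)$ of a tableau to the $i$-th path. The key observation is that the vertices of $\mathcal{G}_t$ in column $x$ have heights with denominator $x+1$, so the rational $T(i,j)/(n - i + j)$ sits naturally at column $x = n - i + j - 1$. I would therefore declare that the $i$-th path exits column $n - i + j - 1$ at height $T(i,j)/(n - i + j)$, for $j = 1, \ldots, \lambda_i$, with the source $(n - i,\, t - 1/(n - i + 1))$ and the sink $(\lambda_i + n - i,\, 0)$ supplying boundary conditions. Between columns the path takes the unique up-right edge, which preserves the integer part of the height and shifts the denominator from $x + 1$ to $x + 2$; within a column it takes the requisite number of downward vertical edges; so the exit heights determine the path uniquely. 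The row inequality $T(i, j)/(n - i + j) \ge T(i, j + 1)/(n - i + j + 1)$ is then exactly the condition that the number of vertical descents needed between consecutive columns is nonnegative, and the bound $T(i, j) < t(n - i + j)$ is exactly the condition that the path stays inside the strip $[0, t)$ in which $\mathcal{G}_t$ lives.

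The main obstacle is to show that the column lecture hall inequality $T(i, j)/(n - i + j) > T(i + 1, j)/(n - i - 1 + j)$ is equivalent to non-intersection of the paths indexed by $i$ and $i + 1$. These two paths share the horizontal range $\{n - i,\, n - i + 1,\, \ldots,\, n - i + \lambda_{i+1} - 1\}$, and at each shared column $x$ their entry and exit heights are rationals with common denominator $x + 1$ determined explicitly by the corresponding tableau entries. After clearing denominators, vertex-disjointness at column $x$ reduces to an integer strict inequality that matches, term by term, the column lecture hall inequality for the appropriate index $j$. Care is needed to verify both the entry and exit heights at shared columns and to confirm that the collection of column inequalities (taken over all $j$) covers all possible vertex coincidences, including those near the boundary of the overlapping range where one path is at its source or the other is approaching its sink.

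Once the bijection is verified, the uniform count on non-intersecting path systems equals the number of BLHT, and the result follows from the preceding proposition. A complementary check would be to apply the Lindström--Gessel--Viennot lemma to express $Z_\lambda(t)$ as a determinant of single-path counts in $\mathcal{G}_t$ and evaluate directly; the single-path counts admit a closed form in this graph, and the resulting determinant is a Vandermonde-type identity yielding the same product formula, but the bijective route is more direct and more informative for later use.
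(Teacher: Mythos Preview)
Your approach is correct but differs from the paper's. The paper proves this theorem directly via the Lindstr\"om--Gessel--Viennot lemma: it writes $Z_\lambda(t)=\det\bigl(P(u_i,v_j)\bigr)_{1\le i,j\le n}$, computes the single-path count $P(u_i,v_j)=\binom{\lambda_j+n-j}{n-i}t^{\lambda_j-j+i}$ by induction on $t$, and then evaluates the determinant by induction on $n$. The bijection with bounded lecture hall tableaux is established in the \emph{next} theorem, as a separate statement.

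You have inverted this logical order: you first construct the bijection with BLHT and then invoke the preceding proposition on the BLHT count. This is a valid and self-contained route, provided the BLHT enumeration in that proposition has been proved independently (which it has, in \cite{CK2}). Your approach buys the explicit combinatorial correspondence at the same time as the count, which is indeed informative for later sections; the paper's LGV route is shorter for the count alone and keeps the two statements modular. What you describe at the end as a ``complementary check'' is in fact exactly what the paper does as its main proof.
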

\begin{proof}
Using the Lindstr\"om-Gessel-Viennot Lemma \cite{GV}, we know that
the number of configurations is equal to
$$
\det(P(u_i,v_j))_{1\le i,j\le n}
$$
where $P(u_i,v_j)$ is the number of paths from $(n-i,t-1/(n-i+1))$
$(\lambda_j+n-j,0)$. It is easy to prove that if $t=1$ the number of paths is
${\lambda_j+n-j\choose n-i}$. Using induction on $t$ one can easily check that
$$
P(u_i,v_j)={\lambda_j+n-j\choose n-i}t^{\lambda_j-j+i}.
$$
The result follows. Indeed computing the determinant 
can be done using induction on $n$ and the fact that 
$$
t^{\lambda_j-j+i}{\lambda_j+n-j\choose n-i}=\frac{t^{\lambda_j-j}(\lambda_j+n-j)}{t^{-i}(n-i)}{\lambda_j+n-j-1\choose n-i-1}.
$$
Details and generalizations can be found in \cite{CK2}.
\end{proof}

In the case $t=1$, we get back a very classical result \cite[Chapter 7]{stanley}. The Schur polynomial
$s_\lambda(x_1,\ldots ,x_n)$ specialized at $x_i=1$ for all $i$ is equal to
$$
s_\lambda(1,\ldots ,1)=\prod_{1\le i<j\le n}\frac{\lambda_i-i-\lambda_j+j}{j-i}.
$$
Now let us present the link between the paths and the tableaux:
\begin{theorem}\cite{CK2}
There exists a bijection between the bounded lecture tableaux of shape $\lambda$ and bounded by $t$
and non-intersecting paths on ${\mathcal G}_t$ starting at $(n-i,t-1/(n-i+1)$ and ending at
$(n-i+\lambda_i,0)$ for $i=1,\ldots, n$.
\end{theorem}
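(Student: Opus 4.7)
The plan is to exhibit the bijection explicitly and then verify that the three tableau axioms match the structural features of the path model. Given a non-intersecting path system with the prescribed endpoints, I define $T(i,j)$, for $j = 1, \dots, \lambda_i$, to be the unique integer for which path $i$ sits at the vertex $(n-i+j-1,\, T(i,j)/(n-i+j))$ immediately before its $j$-th rightward step. The inverse map takes a tableau and reconstructs, for each $i$, the unique path from $(n-i, t-1/(n-i+1))$ to $(n-i+\lambda_i, 0)$ that visits these prescribed vertices. The reconstruction is forced because between two consecutive rightward positions only down-edges are available within a column, so the intermediate vertices are uniquely determined.

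To show that the forward map produces a lecture hall tableau, I verify the three conditions. Condition (1), $T(i,j) < t(n-i+j)$, is immediate from the list of allowed heights at column $n-i+j-1$ in ${\mathcal G}_t$. For condition (2), I write $T(i,j) = k(n-i+j)+r$ with $0\le r<n-i+j$ and $0\le k<t$, so that the rightward edge carries the path to the height $T'(i,j)/(n-i+j+1)$ where $T'(i,j) := k(n-i+j+1)+r$. A short manipulation of fractions shows that $T(i,j)/(n-i+j) \ge T(i,j+1)/(n-i+j+1)$ is equivalent to the integer inequality $T'(i,j) \ge T(i,j+1)$, which is precisely the statement that the path can descend by down-edges in column $n-i+j$ from its entry height to the next pre-right height.

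The heart of the argument is condition (3), which encodes non-intersection. At the shared column $c = n-i+j-1$, path $i$ occupies a contiguous vertical segment whose lowest vertex has height $T(i,j)/(n-i+j)$, while path $i+1$ occupies another contiguous segment whose highest vertex (the arrival of its $j$-th rightward step) has height $T'(i+1,j)/(n-i+j)$. The two segments are disjoint if and only if $T(i,j) > T'(i+1,j)$, and writing $T(i+1,j) = k(n-i+j-1)+r$ with $T'(i+1,j) = k(n-i+j)+r$, a direct comparison (using $r < n-i+j-1$) shows that this integer inequality is equivalent to $T(i,j)/(n-i+j) > T(i+1,j)/(n-i+j-1)$, i.e., condition (3). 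Since the paths are linearly ordered by construction, non-intersection for adjacent pairs forces non-intersection of the full system.

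The main technical obstacle will be the careful bookkeeping across the three different denominators $(n-i+j-1)$, $(n-i+j)$, and $(n-i+j+1)$ that appear when a rightward edge is crossed and when adjacent paths are compared. The observation that makes the arithmetic transparent is that the pair $(k,r)$ is preserved by every rightward edge, so each tableau axiom becomes a simple inequality between integer numerators once the heights are expressed with the correct common denominator; from there the bijection is immediate in both directions.
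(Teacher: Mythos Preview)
Your bijection is exactly the one the paper uses: the paper says ``the number of cells under the $j^{\text{th}}$ horizontal step of the $i^{\text{th}}$ path is exactly $T_{i,j}$,'' which is the same as your definition of $T(i,j)$ as the numerator of the height at the source of that step. The paper's proof is literally those two sentences, with no verification; you carry out the verification of conditions (1)--(3) in full, and your arguments are correct (in particular, the key observation that a rightward edge preserves the pair $(k,r)$ in the decomposition $T=k\cdot(\text{denominator})+r$ is exactly what makes the integer inequalities line up with the fraction inequalities).
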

\begin{proof}
The $i^{th}$ path starts at $(n-i,t-1/(n-i+1)$ and ends at $(\lambda_i+n-i,0)$. It is in bijection with the $i^{th}$ row
of the tableau. The number of cells under the $j^{th}$ horizontal step of the $i^{th}$
path is exactly $T_{i,j}$.
\end{proof}
The tableau on the left of Figure \ref{fig:LHT} is in bijection with the paths on Figure \ref{fig:t4}.

The graph ${\mathcal G}_t$ has a dual graph that we denote by ${\mathcal D}_t$.
The duality here is {\emph not} the duality of planar graphs but the duality of
paths on these graphs. The concept of dual paths is an idea due
to Gessel and Viennot \cite[Section4]{GesselViennot} which we generalize to our context.
\begin{defn}
Given a positive integer $t$, the {\em dual lecture hall graph}
is a graph ${\mathcal D}_t=(V_t,E_t)$.
The vertices of the  ${\mathcal D}_t$ are:
\begin{itemize}
\item $(i,j/(i+1))$ for $i\ge 0$ and $0\le j<t(i+1)$.
\end{itemize}
and the directed edges are
\begin{itemize}
\item from $(i,k+r/(i+1))$ to  $(i+1,k+(r+1)/(i+2))$  for $i\ge 0$ and $0\le r\le i$ and $0\le k<t$.
\item from $(i,k+r/(i+1))$ to $(i,k+(r+1)/(i+1))$ for $i\ge 0$ and $0\le r\le i$ and $0\le k<t-1$ or for $i\ge 0$ and $0\le r\le i$ and $k=t-1$.
\end{itemize}
\end{defn}
Note that the vertices of ${\mathcal G}_t$ and ${\mathcal D}_t$ are embedded in the plane in the same way.
The only difference is that the edges $(i,k+r/(i+1))$ to  $(i+1,k+r/(i+2))$ in ${\mathcal G}_t$
are replaced by $(i,k+r/(i+1))$ to  $(i+1,k+(r+1)/(i+2))$  in ${\mathcal D}_t$
and the direction of the vertical edges is reversed.

An example of the graph ${\mathcal D}_3$ is given on Figure \ref{d3}.
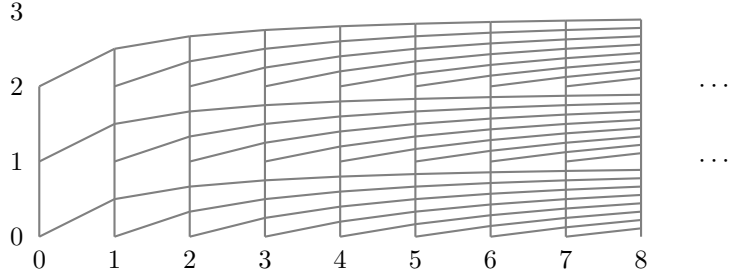
\begin{figure}
  \centering
\begin{tikzpicture}
\SHLL{8}3
\end{tikzpicture}
\caption{The graph ${\mathcal D}_t$ for $t=3$}
\label{d3}
\end{figure}

Let us now explain why
the paths on the graphs ${\mathcal D}_t$ and 
${\mathcal G}_t$ are dual.
Given $\lambda=(\lambda_1,\ldots ,\lambda_n)$ with $\lambda_1\le m$,
let $\lambda'=(\lambda'_1,\ldots \lambda'_m)$ be such that
$$
\lambda'_i=\#\{j\ | \ \lambda_j\ge i\}.
$$
We call $\lambda'$ the conjugate of $\lambda$.
\begin{prop}
There exists a bijection between
\begin{itemize}
\item Systems of $n$ non-intersecting paths on ${\mathcal G}_t$ that start at 
$(n-i,t-1/(n-i+1))$ and end at $(\lambda_i+n-i,0)$ for $i=1,\ldots ,n$.
and 
\item  Systems of $m$ non-intersecting paths on ${\mathcal D}_t$ that start at 
$(n+i-1-\lambda'_i,0)$ and end at $(n+i-1,t-1/(n+i))$
for $i=1,\ldots ,m$.
\end{itemize}
\end{prop}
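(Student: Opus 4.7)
The plan is to reduce the statement to the previous theorem: I would show that non-intersecting paths on $\mathcal{D}_t$ with the stated endpoints are themselves in bijection with bounded lecture hall tableaux (BLHT) of shape $\lambda$ bounded by $t$, and then compose with the inverse of the primal-side bijection from the previous theorem. Whereas that theorem encoded the $i$-th row of the tableau by the primal path $P_i$ on $\mathcal{G}_t$ (with the ``height'' of its $j$-th diagonal step recording $T(i,j)$), I would show that the $j$-th dual path $Q_j$ on $\mathcal{D}_t$ encodes the $j$-th \emph{column} of the tableau, read from its bottom cell $(\lambda'_j, j)$ upward to its top cell $(1, j)$.

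The first step is to verify the boundary-matching identity
\[
\{\lambda_i + n - i : 1 \le i \le n\} \sqcup \{n + j - 1 - \lambda'_j : 1 \le j \le m\} = \{0, 1, \ldots, n+m-1\},
\]
which is the classical bijection between a partition and its conjugate via the beads/Frobenius parametrization. This shows that the primal sinks and the dual sources together partition the bottom-row columns, and an analogous identity at the top row confirms that primal sources and dual sinks partition the topmost vertices of each column. Next, I would note that the $i$-th diagonal step of $Q_j$ (counting from the bottom) lies at column $x = n - I + j$ where $I = \lambda'_j - i + 1$, and show that its $y$-height equals $T(I,j)/(n-I+j)$. The column inequality $T(I,j)/(n-I+j) > T(I+1,j)/(n-I-1+j)$ of the BLHT then translates into the local monotonicity of $Q_j$ through consecutive columns on $\mathcal{D}_t$, while the row inequality $T(I,j)/(n-I+j) \ge T(I,j+1)/(n-I+j+1)$ translates into non-intersection between $Q_j$ and $Q_{j+1}$.

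The main obstacle is the offset in the definition of $\mathcal{D}_t$ relative to $\mathcal{G}_t$: the diagonals of $\mathcal{D}_t$ go from $(i, k + r/(i+1))$ to $(i+1, k + (r+1)/(i+2))$ rather than to $(i+1, k + r/(i+2))$, and the vertical edges are reversed. The translation between path step heights and tableau entries therefore carries a shift relative to the primal case, and the lecture hall inequalities must be reworked with this shift to obtain the correct $\mathcal{D}_t$-edge constraints. Once this local identification is verified, composing the inverse of the primal bijection with the new dual bijection yields the claimed bijection. An alternative route, avoiding the tableau intermediate, would be to construct the bijection directly via Gessel-Viennot complementary paths: at each vertex of the shared vertex set, the incident $\mathcal{G}_t$- and $\mathcal{D}_t$-edges are assigned to primal or dual paths according to a local case analysis whose consistency is forced by the boundary-matching identity above.
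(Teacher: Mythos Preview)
Your proposal is correct, but your main route differs from the paper's. The paper does not factor through the tableau at all; it constructs the bijection directly path-to-path, which is precisely what you describe at the end as your ``alternative route.'' Concretely, the paper's map is: delete all vertical steps of the primal paths; replace each primal horizontal edge $(i,k+r/(i+1))\to(i+1,k+r/(i+2))$ by the dual diagonal edge $(i,k+r/(i+1))\to(i+1,k+(r+1)/(i+2))$; then fill in the unique vertical steps on $\mathcal{D}_t$ that connect these diagonals into $m$ paths with the stated endpoints. Reversibility is immediate. No boundary-matching identity or tableau reinterpretation is needed, because the construction is purely local on the shared vertex set.

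Your main route---showing that the $j$-th dual path encodes the $j$-th column of the BLHT, with the column inequality giving monotonicity of $Q_j$ and the row inequality giving non-intersection of $Q_j$ and $Q_{j+1}$---is a genuinely different and perfectly valid argument. It buys you an independent combinatorial meaning for the dual paths (columns rather than rows), which is conceptually pleasant and could be reused elsewhere. The cost is that you must redo the height/offset bookkeeping for $\mathcal{D}_t$ that you flag as the ``main obstacle,'' whereas the paper's direct construction sidesteps that entirely. If you want the shortest proof, take your alternative route; it is exactly what the paper does in three sentences.
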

\begin{proof}
Starting from a system of $n$ non-intersecting paths on ${\mathcal G}_t$ (see Figure \ref{fig:t4}),
we delete all the vertical steps on ${\mathcal G}_t$. See Figure \ref{bijection}.
We then replace each horizontal step of the form $(i,k+r/(i+1))$ to  $(i+1,k+r/(i+2))$ on ${\mathcal G}_t$
by a step  $(i,k+r/(i+1))$ to  $(i+1,k+(r+1)/(i+2))$  on ${\mathcal D}_t$. See Figure \ref{bijection2}.
We  add the vertical steps  on ${\mathcal D}_t$
so that the paths start at $(\lambda_1+i+1-\lambda'_i,0)$ and ending at $(\lambda_1+i+1,t-1/(\lambda_1+i+2))$. See Figure \ref{fig:d4}.
This is easily reversible.
\end{proof}
An example is given on Figures \ref{bijection} and \ref{bijection2}. We start from the paths from Figure \ref{fig:t4}
for $\lambda=(4,3,1,0,0)$ and $n=5 $ and end with the paths on Figure \ref{fig:d4}
for $\lambda'=(3,2,2,1)$ and $m=4$. We draw the horizontal edges
in blue and the vertical edges in red to illustrate the construction.

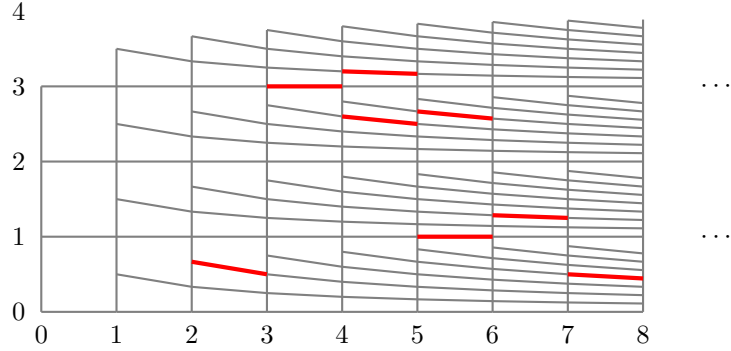
\begin{figure}
  \centering
\begin{tikzpicture}
\LHLL{8}4
\draw [red,ultra thick](2,2/3)--(3,2/4);
\draw [red,ultra thick](3,12/4)--(4,15/5);
\draw [red,ultra thick](4,13/5)--(5,15/6);
\draw[red,ultra thick](5,6/6)--(6,1);
\draw [red,ultra thick](4,16/5) -- (5,19/6);
\draw [red,ultra thick](5,16/6) -- (6,18/7);
\draw [red,ultra thick] (6,9/7) -- (7,10/8);
\draw [red,ultra thick](7,4/8) -- (8,4/9);
\end{tikzpicture}
\caption{Horizontal steps of the non-intersecting lattice paths on Figure \ref{fig:t4}}
\label{bijection}
\end{figure}

\begin{figure}
  \centering
\begin{tikzpicture}
\SHLL{8}4
\draw [blue,ultra thick](2,2/3)--(3,3/4);
\draw [blue,ultra thick](3,12/4)--(4,16/5);
\draw [blue,ultra thick](4,13/5)--(5,16/6);
\draw[blue,ultra thick](5,6/6)--(6,1+1/7);
\draw [blue,ultra thick](4,16/5) -- (5,20/6);
\draw [blue,ultra thick](5,16/6) -- (6,19/7);
\draw [blue,ultra thick] (6,9/7) -- (7,11/8);
\draw [blue,ultra thick](7,4/8) -- (8,5/9);
\end{tikzpicture}
\caption{Horizontal steps of the dual graph}
\label{bijection2}
\end{figure}
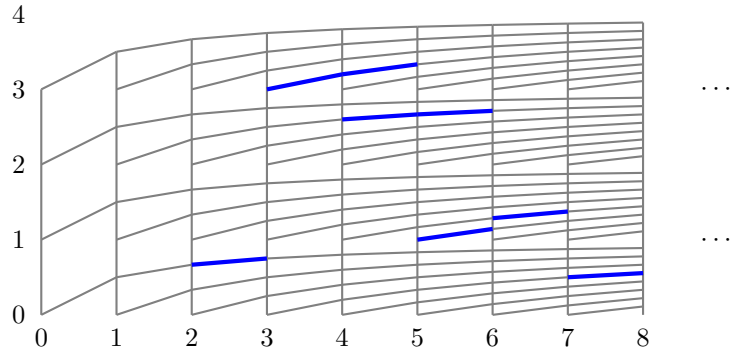

\begin{figure}
  \centering
\begin{tikzpicture}
\SHLL{8}4
\draw [red,ultra thick](2,0)--(2,2/3);
\draw [blue,ultra thick](2,2/3)--(3,3/4);
\draw [red,ultra thick](3,3/4)--(3,3);
\draw [blue,ultra thick](3,12/4)--(4,16/5);
\draw [red,ultra thick](4,0)--(4,13/5);
\draw [blue,ultra thick](4,13/5)--(5,16/6);
\draw[red,ultra thick](5,0)--(5,6/6);
\draw[blue,ultra thick](5,6/6)--(6,1+1/7);
\draw[red,ultra thick](6,1+1/7)--(6,9/7);
\draw [blue,ultra thick](4,16/5) -- (5,20/6);
\draw[red,ultra thick](5,20/6)--(5,23/6);
\draw [blue,ultra thick](5,16/6) -- (6,19/7);
\draw[red,ultra thick](6,19/7)--(6,27/7);
\draw [blue,ultra thick] (6,9/7) -- (7,11/8);
\draw[red,ultra thick](7,11/8)--(7,31/8);
\draw[red,ultra thick](7,0)--(7,4/8);
\draw [blue,ultra thick](7,4/8)--(8,5/9);
\draw[red,ultra thick](8,5/9)--(8,35/9);
\end{tikzpicture}
\caption{Non-intersecting lattice paths on ${\mathcal D}_4$
for $m=4$ and $\lambda'=(3,2,2,1)$.} 
\label{fig:d4}
\end{figure}
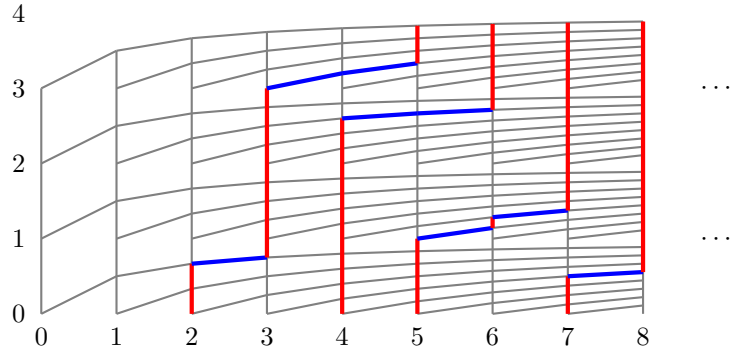

\subsection{Dimer models}

In this section we give a bijection between lecture hall tableaux of shape $\lambda$
bounded by $t$ and the dimer model on an embedded bipartite graph whose faces are hexagons
and octagons. To do so we first replace each vertex $(i,k+r/(i+1))$  of the lecture Hall graph ${\mathcal
G}_t$ by a white vertex $w(i,k+r/(i+1))$ and a black vertex $b(i,k+r/(i+1))$ joined by an edge.
Then we replace
\begin{itemize}
\item the edge from $(i,k+r/(i+1))$ to  $(i+1,k+r/(i+2))$  for $i\ge 0$ and $0\le r\le i$ and $0\le k<t$;
by the edge from the black vertex  $b(i+1,k+r/(i+2))$  to the  white vertex  $w(i,k+r/(i+1))$.
\item the edge from $(i,k+(r+1)/(i+1))$ to $(i,k+r/(i+1))$ for $i\ge 1$ and $0\le r\le i$ and $0\le k<t$;
by the edge from the weight vertex  $w(i,k+(r+1)/(i+1))$ to the black vertex  $b(i,k+r/(i+1))$.
\end{itemize}
We call this new graph the {\em lecture hall lattice} and denote it by ${\mathcal H}_t$. An example 
for $t=3$ is given on Figure \ref{t33}. 
To simplify the notation, we now write the black vertex $(i,k+r(i+1))$ for the black vertex $b(i,k+r(i+1))$.
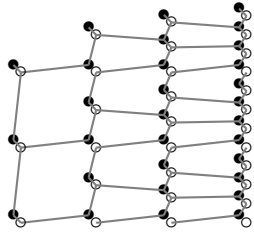
\begin{figure}
  \centering
\begin{tikzpicture}
\DLL{3}3
\end{tikzpicture}
\caption{The lecture hall lattice  ${\mathcal H}_3$}
\label{t33}
\end{figure}

Now to build a bijection from lecture Hall tableaux of shape $\lambda$
bounded by $t$ and dimer models on  ${\mathcal H}_t$, we look at dimer configurations where
\begin{itemize}
\item we add a white vertex $(n-i,t)$ and an edge from this vertex to the black vertex
$(n-i,t-1/(n-i+j)$;
\item we add a black vertex $(n-i+\lambda_i,-1/(n-i+\lambda_i+1))$ and an edge from the white vertex
$(n-i+\lambda_i,0)$ to  this vertex;
\end{itemize}
for $1\le i\le n$. We call this graph the decorated lecture hall lattice  ${\mathcal H}_t(\lambda)$.
On Figure \ref{t32} we give an example of ${\mathcal H}_3(2,2)$.

\begin{figure}
  \centering
\begin{tikzpicture}
\DLL{3}3
\node at (0,3){$\circ$};
\node at (1,3){$\circ$};
\node at (2-.1,-1/3+.1){$\bullet$};
\node at (3-.1,-1/4+.1){$\bullet$};
\draw [gray,thick](-.1,2.1)--(0,3);
\draw [gray,thick](1-.1,2.6)--(1,3);
\draw [gray,thick](2-.1,-1/3+.1)--(2,0);
\draw [gray,thick](3-.1,-1/4+.1)--(3,0);
\end{tikzpicture}
\caption{The decorated lecture hall lattice ${\mathcal H}_3(2,2)$}
\label{t32}
\end{figure}
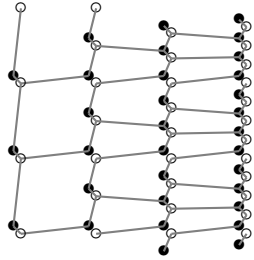

We now give the bijection from the non-intersecting paths to the dimer model. The bijection is 
a specialization to our graphs of a classical technique in non-intersecting lattice 
paths and dimer models. 
Whenever a path uses an edge from vertex $(x,y)$ to vertex $(w,z)$ on the lecture hall graph, we put a dimer
on the edge from the white vertex $(x,y)$ to the black vertex $(w,z)$ on the lecture hall lattice.
Whenever a vertex $(x,y)$ is not used by any path,  we put a dimer
on the edge from the black vertex $(x,y)$ to the white vertex $(x,y)$.
Then we add dimers on the edge from the white vertex $(n-i,t)$ to the  black vertex
$(n-i,t-1/(n-i+j)$ and on the edge from the white vertex
$(n-i+\lambda_i,0)$ to  the black  vertex $(n-i+\lambda_i,-1/(n-i+\lambda_i+1))$ for $1\le i\le n$.
An example is given on Figure \ref{t322}. Here $n=2$ and $\lambda=(2,2)$.

\begin{figure}
  \centering
\begin{tikzpicture}
\LHLL{3}3
\draw [red,ultra thick](0,2)--(1,2)--(1,3/2)--(2,4/3)--(2,0);
\draw [red,ultra thick](1,5/2)--(2,7/3)--(2,2)--(3,2)--(3,0);
\end{tikzpicture}\hspace{1cm}
\begin{tikzpicture}
\DLL{3}3
\draw [red,ultra thick](0,2)--(1-.1,2.1);
\draw [red,ultra thick](1,2)--(1-.1,1.6);;
\draw [red,ultra thick](1,1.5)--(2-.1,4/3+.1);;
\draw [red,ultra thick](2,4/3)--(2-.1,1+.1);;
\draw [red,ultra thick](2,1)--(2-.1,2/3+.1);;
\draw [red,ultra thick](2,2/3)--(2-.1,.1+1/3);;
\draw [red,ultra thick](2,1/3)--(2-.1,.1);;
\draw [red,ultra thick](1,5/2)--(2-.1,7/3+.1);
\draw [red,ultra thick](2,7/3)--(2-.1,6/3+.1);
\draw [red,ultra thick](2,6/3)--(3-.1,6/3+.1);
\draw [red,ultra thick](3,8/4)--(3-.1,7/4+.1);
\draw [red,ultra thick](3,7/4)--(3-.1,6/4+.1);
\draw [red,ultra thick](3,6/4)--(3-.1,5/4+.1);
\draw [red,ultra thick](3,5/4)--(3-.1,4/4+.1);;
\draw [red,ultra thick](3,4/4)--(3-.1,3/4+.1);;
\draw [red,ultra thick](3,3/4)--(3-.1,2/4+.1);;
\draw [red,ultra thick](3,2/4)--(3-.1,1/4+.1);;
\draw [red,ultra thick](3,1/4)--(3-.1,0/4+.1);;
\draw [red,ultra thick](0,1)--(0-.1,1.1);
\draw [red,ultra thick](0,0)--(0-.1,0.1);
\draw [red,ultra thick](1,0)--(1-.1,0.1);
\draw [red,ultra thick](1,0.5)--(1-.1,0.6);
\draw [red,ultra thick](1,1)--(1-.1,1.1);
\draw [red,ultra thick](2,5/3)--(2-.1,5/3+.1);
\draw [red,ultra thick](2,8/3)--(2-.1,8/3+.1);
\draw [red,ultra thick](3,2+1/4)--(3-.1,2+.1+1/4);
\draw [red,ultra thick](3,2+2/4)--(3-.1,2+.1+2/4);
\draw [red,ultra thick](3,2+3/4)--(3-.1,2+.1+3/4);
\node at (0,3){$\circ$};
\node at (1,3){$\circ$};
\node at (2-.1,-1/3+.1){$\bullet$};
\node at (3-.1,-1/4+.1){$\bullet$};
\draw [red,ultra thick](-.1,2.1)--(0,3);
\draw [red,ultra thick](1-.1,2.6)--(1,3);
\draw [red,ultra thick](2-.1,-1/3+.1)--(2,0);
\draw [red,ultra thick](3-.1,-1/4+.1)--(3,0);
\end{tikzpicture}
\caption{From paths on the lecture hall graph ${\mathcal G}_3$ to dimers on the lecture hall lattice  ${\mathcal H}_3(2,2)$}
\label{t322}
\end{figure}
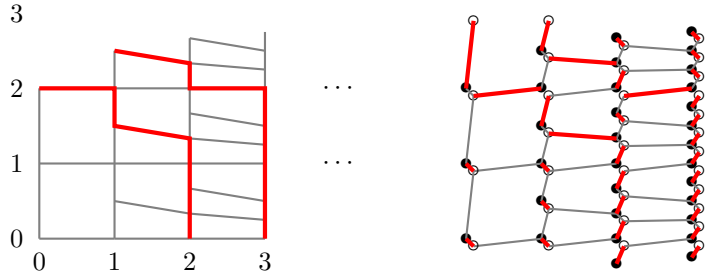

\section{Simulations}
\label{simu}

In order to validate numerically our findings on the Arctic Curve of the model,
we have performed the uniform sampling of large bounded lecture hall tableaux.
In this section we explain the algorithm by which  we randomly generated the bounded lecture hall tableaux. We use an algorithm called ``coupling from the past" \cite{PW}. We adapted a
parallel implementation of coupling from the past due to the second author and Sridhar 
that generates random tilings using GPU \cite{KS}.

Given a partition $\lambda=(\lambda_1,\ldots ,\lambda_n)$ and an integer $t$ we define a partial order on the bounded lecture hall tableaux of shape $\lambda$.
\begin{dfn}
Given two lecture hall tableau $T$ and $U$ of shape $\lambda$ bounded by $t$ then $T\le U$ if and only if 
$$
T(i,j)\le U(i,j)\ \ {\rm for}\ {\rm all}\ i,j.
$$
\end{dfn}
This partial order has a unique minimum $T_{min}$ and maximum $T_{max}$ where
\begin{eqnarray*}
T_{min}(i,j)&=&n-i;\\
T_{max}(i,j)&=&t(n-i+j)-j; \ \ {\rm for}\ {\rm all}\ i,j.
\end{eqnarray*}

We build the Markov chain $MLHT$ on the set of lecture hall tableaux of shape $\lambda$ bounded by $t$.
The vertices of the chain 
are all the tableaux counted by $Z_\lambda(t)$ 
and there is a transition from a tableau $T$ to a tableau $U$
with transition probability $\pi(T,U)=1/|\lambda|$ with $|\lambda|=\lambda_1+\ldots +\lambda_n$; if
$$
T\le U; \ \ {\rm and}\  1+\sum_{i,j}T(i,j)=\sum_{i,j}U(i,j);
$$
or
$$
U\le T; \ \ {\rm and}\  1+\sum_{i,j}U(i,j)=\sum_{i,j}T(i,j).
$$
Then we add the transitions $\pi(T,T)=1-\sum_{U\neq T}\pi(T,U)$ for all tableaux $T$.
This Markov chain is reversible and symmetric. Thus its stationary distribution is uniform.

To sample a random bounded lecture hall tableau, we would like to perform a random walk on $MLHT$.
Nevertheless we do not know the mixing time of this Markov chain. So we use a celebrated technique
due to Propp and Wilson \cite{PW} called coupling from the past.
The coupling-from-the-past algorithm effectively simulates running the Markov chain for an infinite
time. It works as follows: we will run two backward  walks  one starting at $T_{min}$ and
the other one at $T_{max}$. Let $T_{min}^m$ and $T_{max}^m$ be the tableaux after $m$ steps.
These walks will be such that for each $m$, $T_{min}^m\le T_{max}^m$. The algorithm stops
when $T_{min}^m=T_{max}^m$. 

Let us now explain how one step of the algorithm is performed. We change slightly the Markov chain
to speed up the generation.
We number the cells of $\lambda$ from 1 to $|\lambda|$. 
Given two tableaux $T_{min}^m$ and $T_{max}^m$. 
We pick two numbers uniformly in the interval $[0,1]$. Let us call then $k$
and $\ell$. Then we pick the  cell $\lfloor k|\lambda|\rfloor$ of the diagram of $\lambda$.
Let us call this cell $(x,y)$. If the cell $(x,y)$ of $T_{min}^m$ could contain the integers $\{a,a+1,\ldots ,b\}$
without violating the condition for $T_{min}^m$ to be lecture hall then
we change the cell $(x,y)$ to the value $\lfloor a+(b-a+1)\ell\rfloor$.
If the cell $(x,y)$ of $T_{max}^m$ could contain the integers $\{c,c+1,\ldots ,d\}$
without violating the condition for $T_{max}^m$ to be lecture hall then
we change the cell $(x,y)$ to the value $\lfloor c+(d-c+1)\ell\rfloor$. 
We do not change the value of the other cells.
The result is a pair of tableaux $T_{min}^{m+1},T_{max}^{m+1}$ and we denote this by
$(T_{min}^{m+1},T_{max}^{m+1})=R_{k,\ell}(T_{min}^m,T_{max}^m)$.
As $a\le c$ and $b\le d$, then $a+(b-a+1)\ell\le c+(d-c+1)\ell$ and
$T_{min}^{m+1}\le T_{max}^{m+1}$.

To get to $T_{min}^m$ and  $T_{max}^m$, from $T_{min}$ and $T_{max}$
we need to pick $k_1,\ldots ,k_m$ and $\ell_1,\ldots ,\ell_m$
and as we run the walks backwards, then
$$
(T_{min}^m,T_{max}^m)=R_{k_1,\ell_1}(R_{k_2,\ell_2}(\ldots (R_{k_m,\ell_m}(T_{min},T_{max})\ldots ).
$$
In practice we stop when $T_{min}^m$ and $T_{max}^m$
are ``close".
This is very similar to what was done for lozenge and domino tilings. See \cite{KS} for example.

Here we present the simulations for our two running examples. We will always present the non-intersecting paths on the lecture hall graph
starting at $(n-i,t-1/(n-i+1))$ and ending at $(\lambda_i+n-i,0)$ or on the dual lecture hall graph.
We give the example for $\lambda=(n,n-1,\ldots ,2,1)$ on Figure \ref{staircase} and for 
$\lambda=(n,\ldots ,n)$ on Figure \ref{square}. Here we set $t=n$.
As we can see on these figures, we have several types of behavior: some regions are {\em frozen}, i.e. are empty or are filled with vertical paths
and some regions are {\em liquid}, i.e. seem random. Some other things that we can see is that the separation from the liquid to the the frozen
region is always sharp and the shape of this separation is always given by the trajectory of one of the paths on the lecture hall graph or on the dual lecture hall graph.
We will  use this observation to compute the curves separating the regions in Section \ref{tangent}.
On Figures \ref{staircase} and \ref{square} we draw the conjectural curve that separates the frozen and liquid regions.
As one can guess, these seem to be a semicircle and an ellipse. In Sections \ref{tangent} and \ref{dimer}
we will explain how to compute  these curves.

\begin{figure}
\includegraphics[width=6cm]{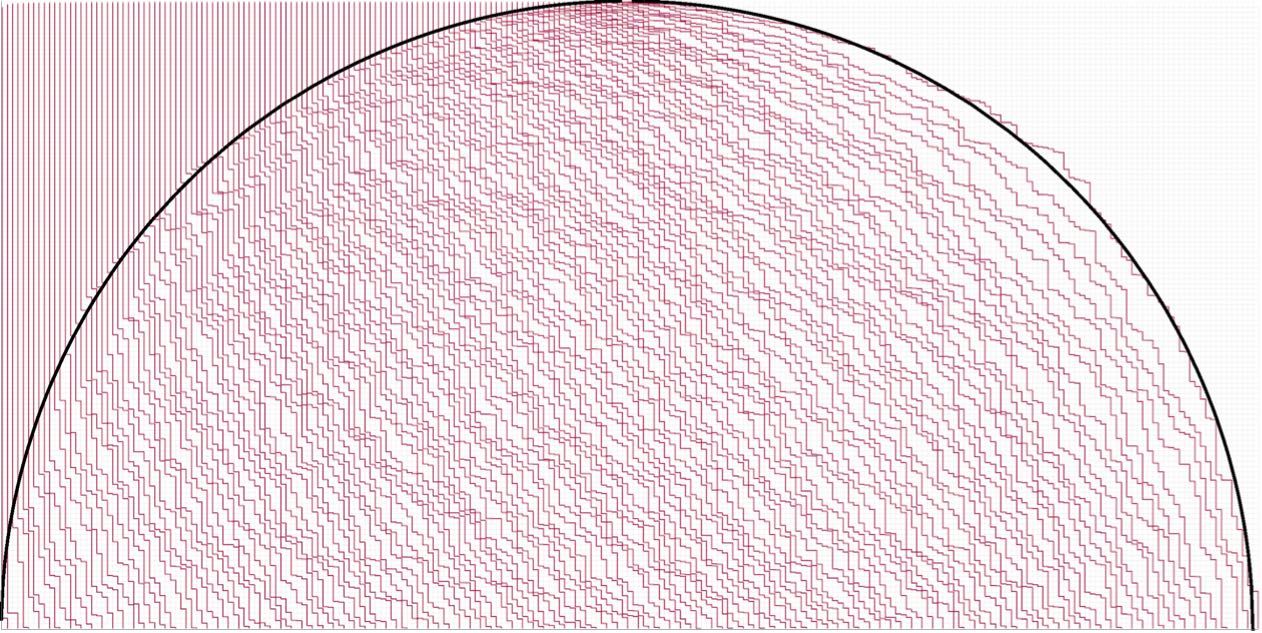}
\caption{Simulation for $\lambda=(n,n-1,\ldots ,2,1)$, $t=n=120$}
\label{staircase}
\end{figure}

\begin{figure}
\includegraphics[width=6cm]{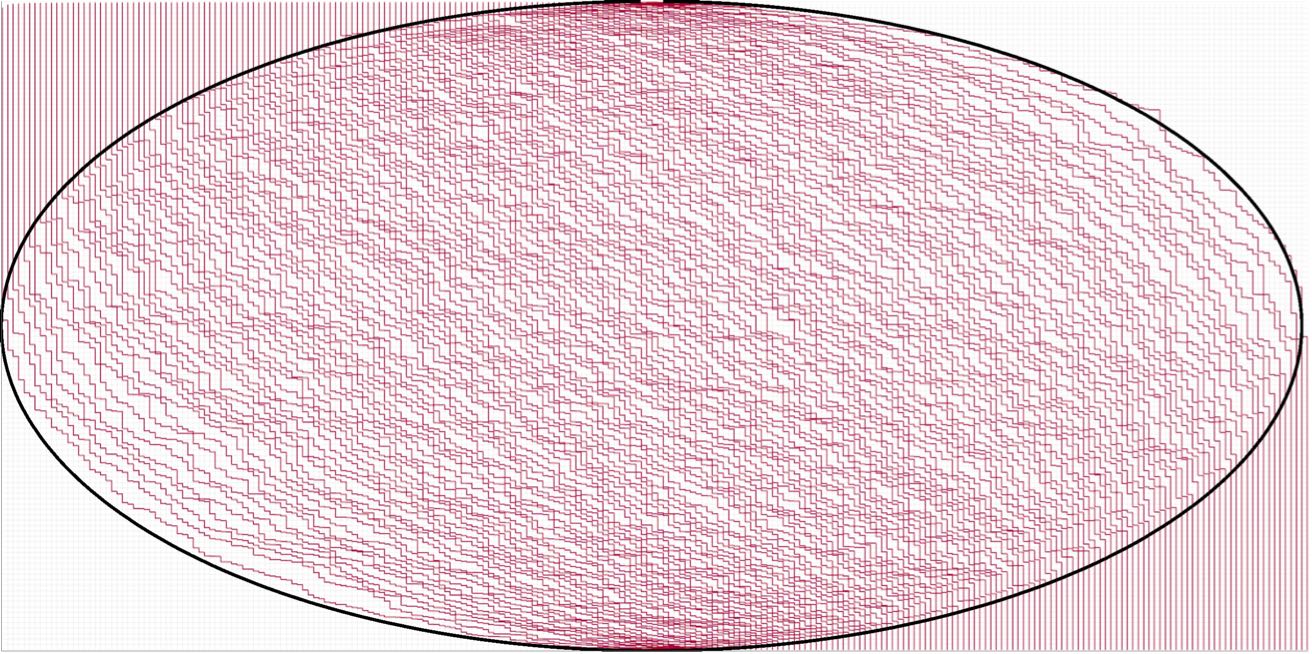}
\caption{Simulation for $\lambda=(n,n,\ldots ,n,n)$, $t=n=120$}
\label{square}
\end{figure}

\section{Tangent Method}
\label{tangent}

Developed in \cite{CS} the tangent method provides a simple way to compute arctic curves for models that can be described as a configuration of non-intersecting paths. Rather than computing bulk correlation functions to determine the boundary between the ordered and disordered phases, the tangent method requires only the computation of a boundary one-point function. 

We will explain the procedure in the framework of lecture hall tableaux. Consider a lecture hall tableaux configuration drawn as a set of non-intersecting paths. Suppose that the boundary between the frozen and liquid regions is given by the trajectory of one of the paths. That is, in the thermodynamic limit a portion of the arctic curve follows the expected value of the position of this boundary path. Call the endpoint of this path along the bottom boundary $p_0$. The assumption behind the tangent method states that as we vary $p_0$, the path will follow its original trajectory, along the arctic curve, until it can travel in a straight line to its endpoint. Moreover, this line will be tangent to the arctic curve. Together these are known as the {\em tangency assumption} of the tangent method.

In practice, we extend our domain to allow our path to end at a new point shifted horizontally by $q$ and lowered by $s$ from its original endpoint.  We then calculate the  expected location at which the path exited the original domain. Between these two points (the end point and the exit location) the path travels in a region empty of all other paths and its most probable free trajectory (or ``geodesic") is determined by the underlying graph. If the underlying graph is translationally invariant, the geodesics are straight lines. As a potentially surprising further analogy between lecture-hall tableaux and ordinary SSYT, and a justification \emph{a posteriori} of our choice of embedding of the graph, it turns out that the geodesics are straight lines also in our setting. The tangency assumption then states that the path will continue along this geodesic until meeting the arctic curve, and that the geodesic will in fact be tangent to the arctic curve.

If the tangent method holds (that is, the tangency assumption is true), then as we vary $q$ we obtain a family of lines forming an envelope of the arctic curve. From this envelope one can obtain a parameterization  of the arctic curve itself. In what follows, we will employ this method to derive parameterizations for the arctic curve for BLHT.

In this section, we first compute the asymptotic shape of a single path as a warm-up (and justification of the claim above). We then compute a parameterization the arctic curve determined by the outer most paths. Finally, we consider the arctic curve at a so called ``freezing boundary". The main result of this section is equation (\ref{thm:main}).

\subsection{Single Path}\label{subsec:sp} 
Suppose we have a single path starting at $(n-1, t-\frac{1}{n})$ and ending at $(n+k-1,0)$, corresponding to a single lecture hall partition. Recall that $Z_{n,k}(t) =  \binom{n+k-1}{k} t^k$ gives the number of paths with the given starting point and ending point. We want to know where the path crosses a horizontal slice at height $s$. To this end we divide the full path into two paths, one starting at $(n-1, t-\frac{1}{n})$ and ending at $(n+l-1,s)$ and the other one starting at $(n+l-1, s-\frac{1}{n+l})$ and ending at $(n+k-1,0)$. We rewrite the partition function for the full path as the product of the partition functions for the two partial paths, summing over all possible intermediate points $l$. We have
\begin{equation} \label{eq:singlepath}
\begin{aligned}
Z_{n,k}(t)  =&  \sum_{l=0}^k Z_{n,l}(t-s)\; Z_{n+l,k-l}(s) \\
=& \sum_{l=0}^k \binom{n+l-1}{l}(t-s)^l \binom{n+k-1}{k-l} s^{k-l}.
\end{aligned}
\end{equation} 
As an exercise in the type of calculations that will follow we prove the following:
\begin{prop}
Consider the limit $N\to \infty$, for parameters scaling as $n=N \nu$, $k = N \kappa$, $t =N \tau$, $s=N\sigma$, $l= N \lambda$, in which all Greek letters are $O(1)$.  Asymptotically the single path travels in a straight line between its starting and ending points.
\end{prop}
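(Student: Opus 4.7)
The plan is to apply Laplace's method to the exact sum identity
\[
Z_{n,k}(t) \;=\; \sum_{l=0}^k \binom{n+l-1}{l}(t-s)^l \binom{n+k-1}{k-l} s^{k-l},
\]
interpreting the summand (once normalized by $Z_{n,k}(t)$) as the probability that the path crosses the horizontal slice at height $s$ at horizontal position $n+l-1$. Showing the path is asymptotically straight then amounts to showing that this probability concentrates, as $N\to\infty$, on the value $l^\ast$ such that the corresponding crossing point lies on the segment joining $(n-1,t)$ to $(n+k-1,0)$.

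First, I would apply Stirling's formula to each binomial, using the scaling $n=N\nu$, $k=N\kappa$, $t=N\tau$, $s=N\sigma$, $l=N\lambda$, to write the summand as $\exp(N\, F(\lambda)+O(\log N))$ where a short calculation (using that $(\nu+\lambda)\log(\nu+\lambda)$ appears with opposite signs in two of the four factors and cancels) gives
\begin{align*}
F(\lambda) \;=\; &-\lambda\log\lambda - (\kappa-\lambda)\log(\kappa-\lambda) + \lambda\log(\tau-\sigma) + (\kappa-\lambda)\log\sigma \\
&+ (\nu+\kappa)\log(\nu+\kappa) - \nu\log\nu.
\end{align*}
The last two terms are $\lambda$-independent, so only the first line matters for locating the maximum.

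Next, I would compute
\[
F'(\lambda) \;=\; \log\!\left(\frac{(\tau-\sigma)(\kappa-\lambda)}{\sigma\,\lambda}\right),
\]
set it equal to zero, and solve to obtain the unique critical point
\[
\lambda^\ast \;=\; \frac{\kappa(\tau-\sigma)}{\tau}.
\]
A direct check shows $F''(\lambda^\ast)<0$, so $\lambda^\ast$ is a genuine maximum. Interpreting this geometrically, the horizontal coordinate of the exit point is $n+l^\ast-1\approx N(\nu+\kappa(\tau-\sigma)/\tau)$, which is precisely the $x$-coordinate at which the straight line from the starting point $(N\nu,N\tau)$ to the ending point $(N(\nu+\kappa),0)$ crosses the height $y=N\sigma$. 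Thus the concentration point is the straight-line prediction.

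Finally, to turn concentration at one value of $\sigma$ into the statement that the path as a whole is asymptotically straight, I would iterate this argument for any finite collection of heights $\sigma_1,\dots,\sigma_r\in(0,\tau)$: the joint distribution of the exit positions is again obtained by splitting the partition function into a product of $r+1$ pieces, each of binomial form, and the same Laplace analysis shows joint concentration at the straight-line values. Combined with the monotonicity of the path in the vertical coordinate, this gives uniform concentration of the rescaled trajectory on the straight segment. The only mildly delicate step is controlling the subleading factors in Stirling and verifying that the Gaussian fluctuations around $\lambda^\ast$ are of order $\sqrt{N}$ (so that fluctuations of the rescaled trajectory are $O(N^{-1/2})$), but this is the standard Laplace-method estimate and presents no real obstacle away from the boundary values $\lambda=0,\kappa$, which one excludes by assuming $0<\sigma<\tau$.
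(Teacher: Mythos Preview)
Your proposal is correct and follows essentially the same route as the paper: Stirling's approximation applied to the two binomials, identification of the same action $F(\lambda)=S(\lambda)$, the same critical point $\lambda^\ast=\kappa(\tau-\sigma)/\tau$, and the same geometric reading that the crossing point at height $\sigma$ lies on the chord from $(\nu,\tau)$ to $(\nu+\kappa,0)$. One cosmetic remark: the $\lambda$-independent piece coming from $(t-s)^l s^{k-l}$ is of size $N\kappa\log N$ rather than $O(\log N)$ (the paper pulls out an explicit $e^{N\kappa\ln N}$), but since it does not depend on $\lambda$ this has no effect on the Laplace argument; your extra paragraph on iterating over several slices and on the $O(N^{-1/2})$ fluctuations goes slightly beyond what the paper spells out but is consistent with it.
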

\begin{proof}
Recall Stirling's approximation for the factorial gives
\[
x! = \sqrt{2\pi x} x^x e^{-x} \left(1 + O\left(\frac{1}{x}\right)\right)
\] as $x\to \infty$.
We can use this to approximate the binomial coefficients in equation (\ref{eq:singlepath}) in the large $N$ limit. Doing so, we have asymptotically
\begin{equation}
Z_{N\nu,N\kappa}(N\tau)  \sim \frac{1}{2\pi}e^{N\kappa\,\ln(N)} \int_0^\kappa d\lambda \sqrt{\frac{\nu+\kappa}{\lambda \nu(\kappa-\lambda)}} e^{N \, S(\lambda)}
\end{equation}
where
\begin{equation*}
S(\lambda) = \lambda\,\ln(\tau-\sigma) + (\kappa-\lambda)\ln(\sigma) + (\nu +\kappa)\ln(\nu+\kappa) - \lambda\,\ln(\lambda) - \nu\,\ln(\nu) - (\kappa-\lambda)\ln(\kappa-\lambda).
\end{equation*}

This integral can be approximated via Laplace's method. Note that $S$ has its only critical point at
 \[
\frac{(\tau-\sigma)(\kappa-\lambda^*)}{\sigma \lambda^*} = 1
\]
or, rearranging,
\begin{equation} \label{eq:sp}
\lambda^* = \kappa \left(1 - \frac{\sigma}{\tau}\right).
\end{equation}
It is easy to check that this is a maximum.

Let $X,Y$ be coordinates on the rescaled domain $[0, \nu+\kappa]\times [0,\tau]$. Since the integrand is exponentially suppressed away from $\lambda^*$, the most likely position at which our path crosses the slice $Y=\sigma$ is at  $X=\nu+\lambda^*$. Equation (\ref{eq:sp}) then says that most likely path connecting the points $(\nu,\tau)$ to $(\nu+\kappa,0)$, follows the straight line
\[
Y = -\frac{\tau}{\kappa}X+\frac{\tau}{\kappa}(\nu+\kappa).
\]
\end{proof}

\subsection{Outer Boundary} 
Here let us consider the outer boundary of the arctic curve; that is, a section where the boundary between the frozen and disordered region is given by the trajectory of the first or the last path (in either path description). The following analysis is very similar to that of \cite{DG}.

Let $\lambda = (\lambda_1,\ldots ,\lambda_n)$, and consider the bounded lecture hall tableaux of shape $\lambda$ and height $t$. Recall that there is a bijection between the number of BLHT and configurations of non-intersecting, down-right paths with starting points $(n-i,t-\frac{1}{n+1-i})$ and ending points $(n+\lambda_j -j,0)$. 

In what follows, by the $k^{th}$ path we mean the path starting at $(n-k,t-\frac{1}{n+1-k})$ and ending at $(n+\lambda_k -k,0)$. Equivalently, this is the path corresponding to $k^{th}$ row of $\lambda$.

Recall
\begin{align} \label{eq:LHTpf}
Z_\lambda(t)
& = t^{|\lambda|} \prod_{1\le i < j \le n} \frac{\lambda_i - \lambda_j +j-i}{j-i}.
\end{align}
To shorten the notation we write $Z=Z_\lambda(t)$. 

\subsubsection{First Path}
In order to use the tangent method we consider the possible configurations of paths in which the first path ends at the point $(n+\lambda_1+q,-s)$. Let the partition function for this model be called $Z_{qs}$. Define $Z_r$ to be the partition function for BLHT of shape $\mu = (\lambda_1+r,\lambda_2,\ldots,\lambda_n)$. In terms of non-intersecting paths, $Z_r$ is the partition function for configurations in which we have shifted the end point of the first path to the right by $r$.  See Figure \ref{fig:semicircleDiag} for a diagram. With this $Z_{qs}$ can be written as the sum over $r$ of $Z_r$ times the partition function of a single path starting at $(n+\lambda_1 + r - 1,-\frac{1}{n+\lambda_1 + r})$ and ending at $(n+\lambda_1+q-1,-s)$.  We normalize by $Z$ (see eqn. (\ref{eq:LHTpf})) to get
\begin{equation}  \label{eq:pfratio2}
\frac{Z_{qs}}{Z}  = \sum_{r=0}^q \frac{Z_r}{Z} s^{q-r} \binom{n+\lambda_1-1+q}{q-r}.
\end{equation}

\begin{figure}[!htb]
\includegraphics[width=\linewidth]{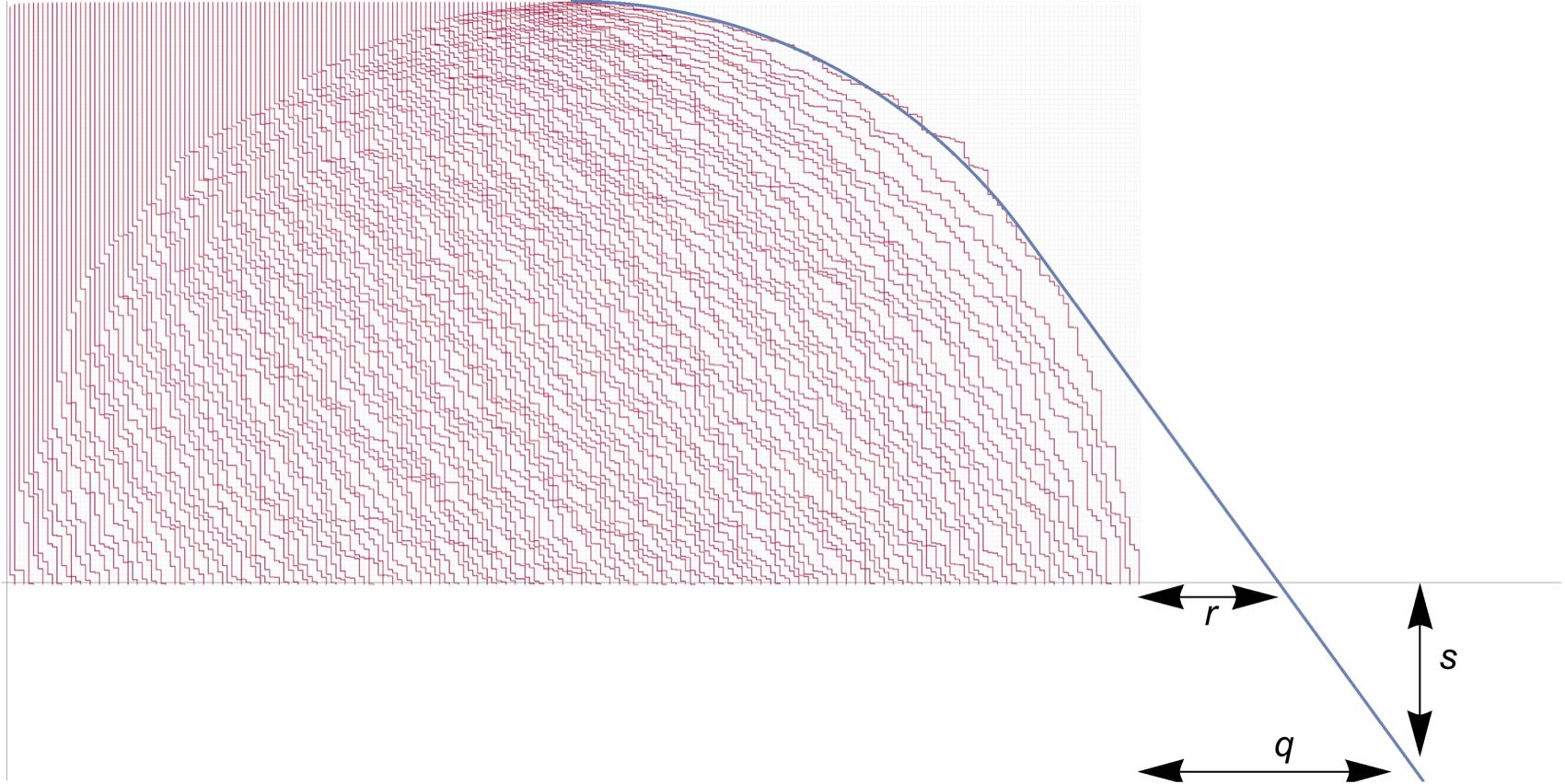}
\caption{A configuration of paths corresponding to a BLHT of shape $\lambda =(n,n-1,\ldots,1)$. The blue curve schematically shows the extended first path ending at $(n+\lambda_1+q-1,-s)$ and passing through $(n+\lambda_1+r-1,0)$.} \label{fig:semicircleDiag}
\end{figure}

From the product formula (\ref{eq:LHTpf}), the ratio of $Z_r$ and $Z$ takes the simple form
\begin{align} \label{eq:pfratio1}
\frac{Z_r}{Z}  & = t^{|\mu| - |\lambda|} \prod_{1\le i < j \le n} \frac{\mu_i - \mu_j +j-i}{\lambda_i - \lambda_j +j-i} \nonumber \\
& = t^r\prod_{i=1,2\le j \le n} \frac{\lambda_1+r - \lambda_j +j-1}{\lambda_1 - \lambda_j +j-1} \prod_{2\le i < j \le n} \frac{\lambda_i - \lambda_j +j-i}{\lambda_i - \lambda_j +j-i} \nonumber \\
& = t^r \prod_{j=2}^n \frac{\lambda_1+r - \lambda_j+j-1}{\lambda_1 - \lambda_j+j-1}.
\end{align}

Using equation (\ref{eq:pfratio1}) we can rewrite equation (\ref{eq:pfratio2}) as
\begin{align} \label{eq:pfratio3}
\frac{Z_{qs}}{Z} & = \sum_{r=0}^q \frac{Z_r}{Z} s^{q-r} \binom{n+\lambda_1-1+q}{q-r} \nonumber \\
& = \sum_{r=0}^q t^r s^{q-r} \prod_{j=2}^n \frac{\lambda_1+r-\lambda_j+j-1}{\lambda_1-\lambda_j +j-1} \binom{n+\lambda_1-1+q}{q-r} \nonumber \\
& =  \sum_{r=0}^q t^r s^{q-r} \prod_{j=2}^n \frac{a_1-a_j +r}{a_1-a_j} \binom{a_1+q}{q-r}
\end{align} 
where $a_i = n+\lambda_i-i$. 

Note that what we have done is write $Z_{qs}$ as a sum over the possible ways the first path can cross the horizontal slice $y=0$. The following lemma describes the most likely location the path will cross this slice as a function of the limiting ratios of $q$ and $s$ to $n$.

\begin{lem} \label{lem:obz}
Consider the limit $n\to \infty$, for parameters scaling as $t= n \tau$, $s= n \sigma$, $r= n \rho$, $q = n z$, and $a_i = \lfloor n \alpha\left(\frac{i}{n}\right)\rfloor$. In this limit, the first path passes through the point $(\alpha(0)+\rho,0)$, where $\rho$ is related to $z$ by
\[
z = \frac{\sigma}{\tau} (\alpha(0) +\rho) e^{-\int_0^1 du \; \frac{1}{\alpha(0)-\alpha(u)+\rho}} + \rho.
\]
\end{lem}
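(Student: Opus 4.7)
The plan is to apply Laplace's method (a saddle-point analysis) to the sum representation \eqref{eq:pfratio3} of $Z_{qs}/Z$. Under the stated scaling each summand has the form $\exp(n S(\rho) + o(n))$ times a $\rho$-independent prefactor, so the sum concentrates at the unique maximizer $\rho^*$ of an effective action $S(\rho)$. Interpreting the dominant index as the most likely rescaled $x$-coordinate at which the first path crosses $y=0$ (which by \eqref{eq:pfratio3} lies at $a_1 + r \sim n(\alpha(0)+\rho)$), the critical-point equation $S'(\rho^*)=0$ will yield the stated implicit relation between $z$ and $\rho$.

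First I would substitute $r = n\rho$ into each factor of \eqref{eq:pfratio3} and take logarithms. The prefactor $t^r s^{q-r}$ contributes $nz\log n + n[\rho\log\tau + (z-\rho)\log\sigma]$. Stirling's formula applied to the binomial with $M = a_1 + q \sim n(\alpha(0)+z)$ and $K = q - r \sim n(z-\rho)$ gives
\[
\log\binom{a_1+q}{q-r} = n\bigl[(\alpha(0)+z)\log(\alpha(0)+z) - (z-\rho)\log(z-\rho) - (\alpha(0)+\rho)\log(\alpha(0)+\rho)\bigr] + O(\log n),
\]
and the coefficient of $n\log n$ cancels identically because $(\alpha(0)+z) = (z-\rho)+(\alpha(0)+\rho)$. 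For the product, the scaling $a_j = \lfloor n\alpha(j/n)\rfloor$ turns the sum of logarithms into a Riemann sum
\[
\sum_{j=2}^n \log\frac{a_1-a_j+r}{a_1-a_j} = n\int_0^1 du\,\log\frac{\alpha(0)-\alpha(u)+\rho}{\alpha(0)-\alpha(u)} + o(n).
\]

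Collecting the $\rho$-dependent terms, the effective action is
\[
S(\rho) = \rho\log\tau + (z-\rho)\log\sigma - (z-\rho)\log(z-\rho) - (\alpha(0)+\rho)\log(\alpha(0)+\rho) + \int_0^1 du\,\log\frac{\alpha(0)-\alpha(u)+\rho}{\alpha(0)-\alpha(u)}.
\]
Differentiating,
\[
S'(\rho) = \log\frac{\tau(z-\rho)}{\sigma(\alpha(0)+\rho)} + \int_0^1 \frac{du}{\alpha(0)-\alpha(u)+\rho},
\]
and $S'(\rho^*)=0$ rearranges, after exponentiation, to $z-\rho^* = \tfrac{\sigma}{\tau}(\alpha(0)+\rho^*)\exp\bigl(-\int_0^1 du/(\alpha(0)-\alpha(u)+\rho^*)\bigr)$, which is precisely the stated relation. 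A short computation shows $S''(\rho) = -\tfrac{1}{z-\rho} - \tfrac{1}{\alpha(0)+\rho} - \int_0^1 du/(\alpha(0)-\alpha(u)+\rho)^2 < 0$, so $\rho^*$ is a strict maximum and Laplace's method applies.

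The main technical obstacle is controlling the Riemann-sum approximation near $u=0$, where the integrand $\log\bigl((\alpha(0)-\alpha(u)+\rho)/(\alpha(0)-\alpha(u))\bigr)$ has a logarithmic singularity (since $\alpha(0)-\alpha(u) \sim -\alpha'(0)\,u$ as $u\to 0$). The integral remains finite by the integrability of $-\log u$, but the $o(n)$ error must be verified to hold uniformly in $\rho$ over a compact neighborhood of $\rho^*$; this uses the regularity hypotheses on $\alpha$ (piecewise differentiable and strictly decreasing, with $\alpha'(0)<0$). A secondary subtlety is the replacement of the discrete sum over integer $r$ by a continuum integral for Laplace's method, which is routine once $S''(\rho^*)<0$ is established, together with the $O(1)$ corrections arising from the floor functions in $a_j=\lfloor n\alpha(j/n)\rfloor$, which are subleading and do not affect the saddle-point equation.
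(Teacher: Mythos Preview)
Your proposal is correct and follows essentially the same route as the paper: Stirling on the binomial, Riemann-sum approximation for the product over $j$, and a saddle-point/Laplace analysis of the resulting action $S(\rho)$, arriving at the identical $S(\rho)$ and critical-point equation. Your write-up is in fact more careful than the paper's, which simply states the asymptotics and locates the critical point; you additionally verify $S''(\rho)<0$ and flag the logarithmic singularity of the integrand at $u=0$ (note the product starts at $j=2$, so the Riemann sum avoids the singular point by a spacing of order $1/n$, and integrability of $-\log u$ handles the limit).
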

\begin{proof}
Taking the above limit in equation (\ref{eq:pfratio3}), the binomial coefficient can be approximated using Stirling's approximation giving
\[
 \binom{a_1+q}{q-r} \sim \frac{1}{\sqrt{2\pi n}}\sqrt{\frac{\alpha(0)+z}{(\alpha(0)+\rho)(z-\rho)}} e^{n \left((\alpha(0)+z) \ln(\alpha(0)+z) - (\alpha(0)+\rho) \ln(\alpha(0)+\rho) -(z-\rho)\ln(z-\rho)\right) },
\]
while the product term can be written 
\[
\prod_{j=2}^n \frac{a_1-a_j +r}{a_1-a_j} = e^{n \left(\frac{1}{n} \sum_{j=2}^n \ln\left( \frac{a_1-a_j +r}{a_1-a_j} \right)\right) } \sim e^{n\int_0^1 du \; \ln\left( \frac{\alpha(0)-\alpha(u) +\rho}{\alpha(0)-\alpha(u)} \right)}.
\] 

All together equation (\ref{eq:pfratio3}) becomes
\begin{align} \label{eq:outerpflimit}
\frac{Z_{qs}}{Z} \sim \sqrt{\frac{n}{2\pi}} e^{nz\; \ln(n)} \int_0^z d\rho \; \sqrt{\frac{\alpha(0) + z}{(\alpha(0)+\rho)(z-\rho)}} e^{n S(\rho)}
\end{align}
where
\[
\begin{split}
S(\rho)  = \rho \; \ln(\tau) + (z-\rho) \ln(\sigma) - (\alpha(0)+\rho)\ln(\alpha(0)+\rho) -(z-\rho) \ln(z-\rho) \\
+ \int_0^1du \; \ln\left( \frac{\alpha(0)-\alpha(u) + \rho}{\alpha(0)-\alpha(u)}\right).
\end{split}
\]

As in Section (\ref{subsec:sp}), the integral is dominated by the contributions from the maximum of $S(\rho)$. This critical point occurs when
\[
\frac{\tau(z-\rho)}{\sigma (\alpha(0) + \rho)} e^{\int_0^1 du \; \frac{1}{\alpha(0)-\alpha(u)+\rho}} =1
\]
or rearranging  
\[
z = \frac{\sigma}{\tau} (\alpha(0) +\rho) e^{-\int_0^1 du \; \frac{1}{\alpha(0)-\alpha(u)+\rho}} + \rho.
\]
\end{proof}

Asymptotically the first path passes through the points $(\alpha(0)+\rho,0)$ and $(\alpha(0)+z,-\sigma)$, where we know $\rho$ in terms of $z$ (and vice versa) from Lemma \ref{lem:obz}. By varying $z$ (or more conveniently $\rho$), we obtain a family of lines which, according to the tangent method, form an envelope of the arctic curve. From these lines we will construct a parameterization of the curve. Let $X$ and $Y$ be continuum coordinates on the domain $[0,\alpha(0)]\times[0,\tau]$ into which we embed our collection of paths. We show
\begin{thm} \label{thm:ob}
Assuming the tangent method holds, the portion of the arctic curve following the first path can be parameterized by
\begin{align}
\begin{split}
X(x) = \frac{x^2I'(x)}{I(x) + x I'(x)} \\
Y(x) =\tau \frac{1}{I(x) + x I'(x)}
\end{split}
\end{align}
with $x\in [\alpha(0),\infty)$.
\end{thm}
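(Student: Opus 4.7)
The plan is to extract the arctic curve as the envelope of the one-parameter family of tangent lines produced by the tangent method, using the relation between $\rho$ and $z$ established in Lemma~\ref{lem:obz}. Since the underlying graph in the extended region is translationally invariant, the most likely free trajectory between the exit point on the $Y=0$ boundary and the shifted endpoint $(\alpha(0)+z,-\sigma)$ is a straight line; by the tangency assumption, this line is tangent to the arctic curve. So for each value of the parameter $\rho$ (equivalently $z$), we get one tangent line, and the envelope of this family is the arctic curve.

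First I would pass to the continuum: by Lemma~\ref{lem:obz}, asymptotically the extended first path exits the original domain at $(\alpha(0)+\rho,0)$ and reaches $(\alpha(0)+z,-\sigma)$, with
\begin{equation*}
z - \rho = \frac{\sigma}{\tau}(\alpha(0)+\rho)\,e^{-\int_0^1 du\,\frac{1}{\alpha(0)-\alpha(u)+\rho}}.
\end{equation*}
Setting $x = \alpha(0)+\rho$, the exponential becomes exactly $I(x)$, so $z-\rho = \sigma x I(x)/\tau$. The straight line through $(x,0)$ and $(x+(z-\rho),-\sigma)$ then has the explicit form
\begin{equation*}
L_x : \quad Y = \frac{\tau}{I(x)} - \frac{\tau X}{x\,I(x)}.
\end{equation*}
The parameter range $\rho \ge 0$ translates to $x \in [\alpha(0),\infty)$, matching the statement.

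Next I would compute the envelope of $\{L_x\}$ by the standard procedure: write $F(X,Y,x) = Y - \tau/I(x) + \tau X/(x I(x))$ and impose $F = 0$ together with $\partial_x F = 0$. A direct differentiation gives
\begin{equation*}
\partial_x F = \frac{\tau I'(x)}{I(x)^2} - \tau X \cdot \frac{I(x)+xI'(x)}{x^2 I(x)^2},
\end{equation*}
which when set to zero yields immediately $X(x) = \dfrac{x^2 I'(x)}{I(x)+xI'(x)}$. Substituting back into $L_x$ and simplifying $1 - X/x = I(x)/(I(x)+xI'(x))$ gives $Y(x) = \dfrac{\tau}{I(x)+xI'(x)}$, completing the parameterization.

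The truly routine part is the envelope computation; the substantive input is Lemma~\ref{lem:obz}, which has already been proved. The only step requiring any care is the justification of the geodesic being a straight line in the extended region (so that the tangent method applies at all) — this is the content of the paragraph before the theorem and hinges on the translation invariance of the lecture hall graph outside the original domain. Thus conditional on the tangency assumption, the proof reduces to the explicit calculation sketched above, and the main obstacle is really just bookkeeping of the limit coordinates and the correct identification of $x$ with $\alpha(0)+\rho$.
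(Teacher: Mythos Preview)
Your proof is correct and follows essentially the same route as the paper: write the tangent line through $(\alpha(0)+\rho,0)$ and $(\alpha(0)+z,-\sigma)$ using Lemma~\ref{lem:obz}, substitute $x=\alpha(0)+\rho$ so the line becomes $\frac{x}{\tau}I(x)Y+X-x=0$ (your form $Y=\tau/I(x)-\tau X/(xI(x))$ is the same equation divided through), and then extract the envelope by solving the system $\{F=0,\ \partial_x F=0\}$. One small caveat on your commentary: the claim that the straight-line geodesic ``hinges on the translation invariance of the lecture hall graph outside the original domain'' is not accurate --- the lecture hall graph is \emph{not} translationally invariant anywhere; the straight-line property is a computed fact (Section~\ref{subsec:sp}) that the paper explicitly remarks is surprising.
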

\begin{proof}
From the two points $(\alpha(0)+\rho,0)$ and $(\alpha(0)+z,-\sigma)$  we have the line
\[
Y = -\frac{\sigma}{z-\rho} (X-\alpha(0)-\rho).
\]
Using Lemma \ref{lem:obz} to eliminate $z$, this becomes
\[
Y = -\frac{\tau}{\alpha(0)+\rho} e^{\int_0^1 du \; \frac{1}{\alpha(0)+\rho - \alpha(u)}} (X-\alpha(0)-\rho)
\]
Rearranging, and letting $x = \alpha(0) + \rho$, we have
\begin{equation} \label{eq:obline}
\frac{x}{\tau} I(x) Y + X-x = 0
\end{equation}
where $I(x) = e^{-\int_0^1 du \; \frac{1}{x-\alpha(u)}}$. Note that $\rho \in [0,\infty)$ implies $x = \alpha(0)+\rho\in[\alpha(0),\infty)$.

Taking the derivative of equation (\ref{eq:obline}) with respect to $x$ gives the system of equations for $(X,Y)$
\begin{align*}
& \frac{x}{\tau}I(x)Y+X-x=0 \\
&\frac{1}{\tau} (I(x)+xI'(x))Y -1 =0
\end{align*}
which can be solved to yield the desired parameterization.
\end{proof}

In what follows, we reuse much of the notation from the preceding section.

\subsubsection{Last Path}
We can consider the same calculation as above on a portion of the arctic curve which follows the last path in the thermodynamic limit. For such a section of the arctic curve to exist, we must have that $\lambda_n$ is of size proportional to $n$. This means that the limiting profile must satisfy $\alpha(1)>0$.  

Suppose $\lambda$ is such a partition. We first consider the case when the endpoint of the last path is shifted to the left by $r$, that is, the path ends at $(\lambda_n-r,0)$, with $r\in[0,\lambda_n]$. In the same manner as the previous section we have
\begin{equation} \label{eq:obfpratio}
\frac{Z_r}{Z} = t^{-r} \prod_{i=1}^{n-1} \frac{\lambda_i-\lambda_n+r+n-i}{\lambda_i-\lambda_n+n-i}.
\end{equation}

Now suppose the last path ends at a point $(\lambda_n +q , -s)$, for $q\ge - \lambda_n$.  Write the total number of configurations as $Z_{qs}$. As in the previous section, using equation (\ref{eq:obfpratio}), we write this partition function as
\begin{align}
\frac{Z_{qs}}{Z} & = \sum_{r=0}^{\lambda_n} \frac{Z_r}{Z} s^{q+r} \binom{\lambda_n+q}{q+r} \nonumber \\
& = \sum_{r=0}^{\lambda_n} t^{-r} s^{q+r} \prod_{i=1}^{n-1} \frac{a_i-a_n-r}{a_i-a_n} \binom{a_n +q}{q+r}.
\end{align} 
See Figure \ref{fig:circleDiag}.

\begin{figure}[!htb]
\includegraphics[width=\linewidth]{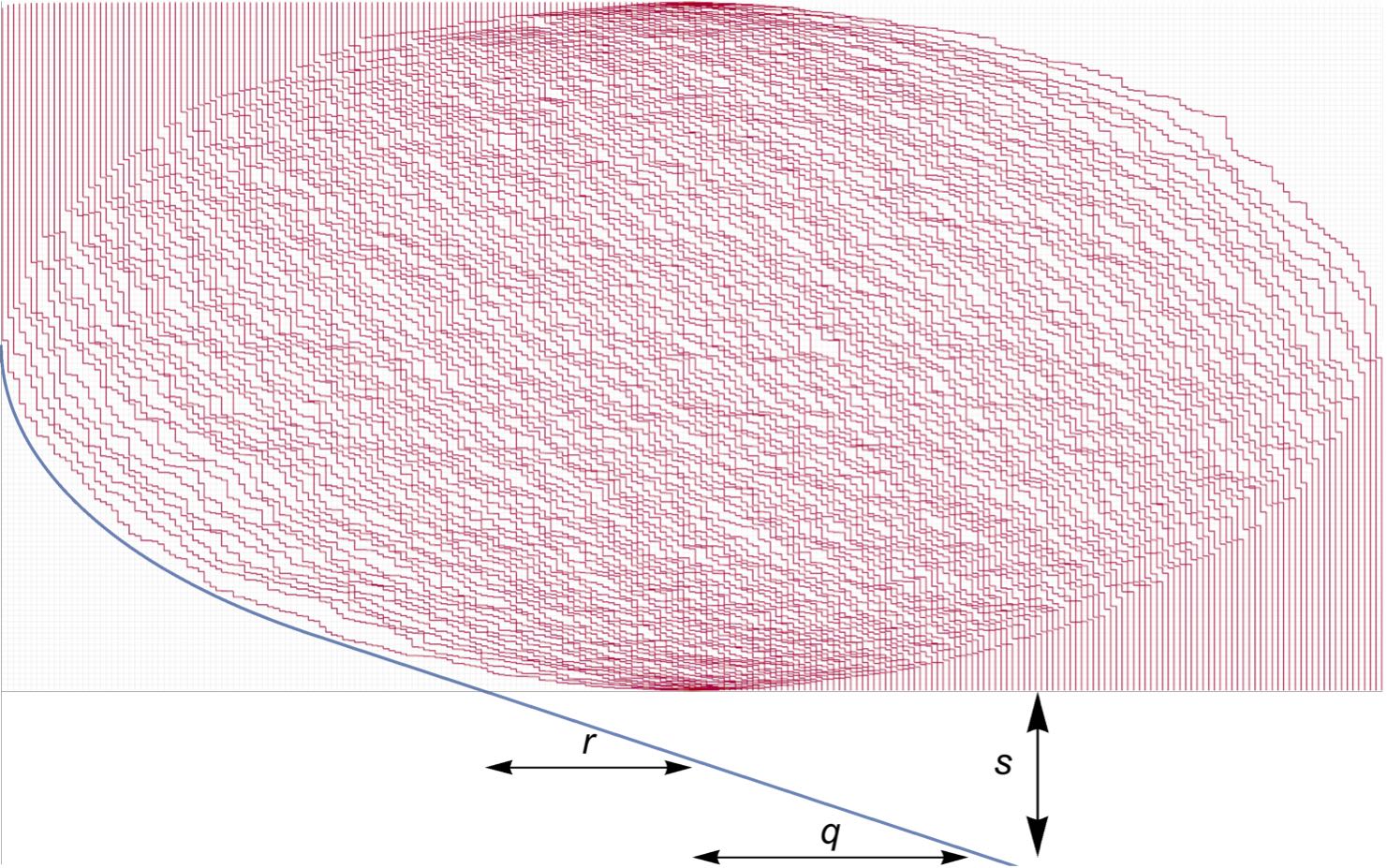}
\caption{A configuration of paths corresponding to a BLHT of shape $\lambda = (n,\ldots,n)$. The blue curve schematically shows the trajectory of the extended last path which ends at $(\lambda_n+q,-s)$ and passes through $(\lambda_n-r,0)$.} \label{fig:circleDiag}
\end{figure}

\begin{lem}
Consider the limit $n\to \infty$, for parameters scaling as $t=N\tau$, $s = n\sigma$, $r=n\rho$, $q=n z$, and $a_i=\lfloor n\alpha\left(\frac{i}{n}\right)\rfloor$. In this limit, the last path passes through the point $(\alpha(1)-\rho,0)$, where $\rho$ is related to $z$ by
\begin{equation} \label{eq:obz2}
z =\frac{\sigma}{\tau} (\alpha(1) - \rho)e^{-\int_0^1 du \frac{1}{\alpha(1)-\rho-\alpha(u)}}-\rho.
\end{equation}
\end{lem}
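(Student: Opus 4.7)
The plan is to mirror, with only cosmetic changes, the proof of Lemma \ref{lem:obz}, starting from the explicit sum expression for $Z_{qs}/Z$ displayed just before the statement. Under the scaling $t=n\tau$, $s=n\sigma$, $r=n\rho$, $q=nz$, $a_i=\lfloor n\alpha(i/n)\rfloor$, the finite sum over $r$ becomes a Riemann integral in $\rho$ to which Laplace's method can be applied to extract the dominant contribution.

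Concretely, I first apply Stirling's approximation to the binomial coefficient $\binom{a_n+q}{q+r}$, which produces a polynomial prefactor times an exponential of $n$ times $(\alpha(1)+z)\ln(\alpha(1)+z) - (\alpha(1)-\rho)\ln(\alpha(1)-\rho) - (z+\rho)\ln(z+\rho)$. I then recognise the Vandermonde-type product as the exponential of a Riemann sum,
\[
\prod_{i=1}^{n-1}\frac{a_i-a_n+r}{a_i-a_n}\sim\exp\!\left(n\int_0^1 du\,\ln\frac{\alpha(u)-\alpha(1)+\rho}{\alpha(u)-\alpha(1)}\right).
\]
Including the $t^{-r}s^{q+r}$ factor and extracting the leading power $e^{nz\ln n}$ as in the outer-boundary computation, I obtain
\[
\frac{Z_{qs}}{Z}\sim\sqrt{\tfrac{n}{2\pi}}\,e^{nz\ln n}\int_0^{\alpha(1)}d\rho\,\sqrt{\tfrac{\alpha(1)+z}{(\alpha(1)-\rho)(z+\rho)}}\,e^{nS(\rho)},
\]
with
\[
S(\rho)=-\rho\ln\tau+(z+\rho)\ln\sigma+(\alpha(1)+z)\ln(\alpha(1)+z)-(\alpha(1)-\rho)\ln(\alpha(1)-\rho)-(z+\rho)\ln(z+\rho)+\int_0^1 du\,\ln\frac{\alpha(u)-\alpha(1)+\rho}{\alpha(u)-\alpha(1)}.
\]

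Applying Laplace's method, the critical-point equation $S'(\rho^*)=0$ reads
\[
\ln\frac{\sigma(\alpha(1)-\rho^*)}{\tau(z+\rho^*)}+\int_0^1\frac{du}{\alpha(u)-\alpha(1)+\rho^*}=0.
\]
Exponentiating and then using $1/(\alpha(u)-\alpha(1)+\rho^*)=-1/(\alpha(1)-\rho^*-\alpha(u))$ turns this into exactly the stated identity relating $z$ and $\rho$. Since the integrand is exponentially concentrated at $\rho^*$, the most probable horizontal coordinate at which the extended last path exits the original domain through the slice $y=0$ is $X=\alpha(1)-\rho^*$, which is the remaining claim of the lemma.

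The main subtlety will be the bookkeeping of signs and the admissible range of $\rho$. Because $\alpha(u)-\alpha(1)$ vanishes at $u=1$, one must verify that $\rho^*$ lies strictly inside $(0,\alpha(1))$ so that the integral $\int_0^1 du/(\alpha(u)-\alpha(1)+\rho^*)$ is finite; this is automatic for generic $\sigma,\tau,z>0$, as the critical-point equation forces $\rho^*\to 0$ only in degenerate limits. A direct computation gives $S''(\rho^*)=-1/(\alpha(1)-\rho^*)-1/(z+\rho^*)-\int_0^1 du/(\alpha(u)-\alpha(1)+\rho^*)^2<0$, confirming that the saddle is a maximum and justifying the Laplace estimate.
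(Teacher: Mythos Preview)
Your proposal is correct and follows essentially the same approach as the paper: Stirling on the binomial, rewriting the product as a Riemann sum, applying Laplace's method, and solving $S'(\rho)=0$ to obtain the stated relation between $z$ and $\rho$. The only differences are cosmetic---you retain the $\rho$-independent term $(\alpha(1)+z)\ln(\alpha(1)+z)$ in $S(\rho)$ and add the second-derivative check and the remark on the range of $\rho$, none of which the paper includes explicitly.
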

\begin{proof}
In the above limit, we have
\[
\frac{Z_{qs}}{Z} \sim \sqrt{\frac{n}{2\pi}}e^{nz\; \ln(n)} \int_0^{\alpha(1)} d\rho \sqrt{\frac{\alpha(1)+z}{(\alpha(1)-\rho)(z+\rho)}}e^{n\;S(\rho)}
\]
where
\[
\begin{split}
S(\rho) = -\rho\; \ln(\tau) +(z+\rho)\ln(\sigma) - (\alpha(1) - \rho) \ln(\alpha(1)-\rho) - (z+\rho)\ln(z+\rho) \\
+ \int_0^1 du \; \ln\left( \frac{\alpha(u)-\alpha(1)+\rho}{\alpha(u)-\alpha(1)} \right)
\end{split}
\]
The critical point occurs when 
\[
\frac{\tau}{\sigma} \frac{z+\rho}{\alpha(1)-\rho} e^{\int_0^1 du \frac{1}{\alpha(1)-\rho-\alpha(u)}} = 1.
\]
Rearranging  gives the desired result.
\end{proof}

Using this we get a parameterization of this section of the arctic curve.
\begin{thm}
Assuming the tangent method holds, the portion of the arctic curve following the last path is parameterized by
\begin{align}
\begin{split}
X(x) = \frac{x^2I'(x)}{I(x) + x I'(x)} \\
Y(x) = \tau \frac{1}{I(x) + x I'(x)}
\end{split}
\end{align}
with $x\in [0,\alpha(1)]$.
\end{thm}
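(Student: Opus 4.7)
The plan is to mimic the derivation carried out in Theorem \ref{thm:ob} for the first path, with the roles of $\alpha(0)$ and $\alpha(1)$ swapped and the shift parameter $\rho$ entering with the opposite sign. Concretely, the preceding lemma tells us that, asymptotically, the extended last path passes through the two points $(\alpha(1)-\rho,0)$ and $(\alpha(1)+z,-\sigma)$, with $z$ and $\rho$ related by equation~\eqref{eq:obz2}. The tangency assumption of the tangent method then asserts that the straight line through these two points is tangent to the arctic curve, and varying $\rho$ sweeps out the envelope whose boundary is the sought portion of the arctic curve.

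First I would write the slope of this line as $-\sigma/(z+\rho)$ and use \eqref{eq:obz2} to rewrite
\[
\frac{\sigma}{z+\rho} \;=\; \frac{\tau}{\alpha(1)-\rho}\,\exp\!\left(\int_0^1 \frac{du}{\alpha(1)-\rho-\alpha(u)}\right).
\]
The natural change of variables is $x = \alpha(1) - \rho$, under which the exponential factor becomes $1/I(x)$ with $I(x) = \exp(-\int_0^1 du/(x-\alpha(u)))$ exactly as in the statement. The equation of the tangent line then collapses to the single clean form
\[
\frac{x\,I(x)}{\tau}\,Y + X - x \;=\; 0,
\]
identical in shape to equation~\eqref{eq:obline}, but arising from a different piece of the boundary. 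Since $\rho$ ranges in $[0,\alpha(1)]$ (the last path cannot be shifted left by more than $\lambda_n$, which scales as $n\alpha(1)$), the corresponding range of the parameter is $x\in[0,\alpha(1)]$.

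Next I would recover the envelope in the usual way: differentiating the line equation with respect to $x$ gives the auxiliary relation $(I(x)+xI'(x))Y/\tau = 1$, so $Y(x) = \tau/(I(x)+xI'(x))$. Plugging this back into the line equation and solving for $X$ yields
\[
X(x) \;=\; x - \frac{x\,I(x)}{I(x)+xI'(x)} \;=\; \frac{x^2 I'(x)}{I(x)+xI'(x)},
\]
which is the claimed parameterization.

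I do not expect any serious obstacle: the lemma preceding the theorem has already done the hard saddle-point work, and the remaining algebra is a routine envelope computation. The only delicate point is bookkeeping — checking the signs carefully so that the sign flip in \eqref{eq:obz2} relative to Lemma~\ref{lem:obz} (i.e., $-\rho$ instead of $+\rho$) is absorbed by the substitution $x=\alpha(1)-\rho$ rather than $x=\alpha(1)+\rho$, and verifying that the resulting range $x\in[0,\alpha(1)]$ is complementary to the range $x\in[\alpha(0),\infty)$ obtained for the first path, so the two arcs of the arctic curve join up consistently into a single curve described by the same formulas from Theorem~\ref{thm:main}.
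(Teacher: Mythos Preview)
Your proposal is correct and follows essentially the same route as the paper: from the two points supplied by the preceding lemma you form the family of tangent lines, substitute $x=\alpha(1)-\rho$ to obtain $\frac{x}{\tau}I(x)Y+X-x=0$, and then differentiate in $x$ and solve the resulting $2\times2$ system to extract $X(x)$ and $Y(x)$, with the range $x\in[0,\alpha(1)]$ coming from $\rho\in[0,\alpha(1)]$. The paper's own proof is terser (it simply states the system and says it can be solved), but the argument is the same.
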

\begin{remark}
Note this is the same parameterization as Theorem \ref{thm:ob} for a different range of parameter, as is normally the case for dimer models (and \emph{not} the case for models which are not free-fermionic). For $\ZZ^2$ invariant graphs, this is a theorem of Kenyon and Okounkov \cite{KO06}. We will see that we the same parameterization works for all portions of the arctic curve of the BLHT, despite the fact that the lecture hall graph is not $\ZZ^2$ invariant. 
\end{remark}

\begin{proof}
From the points $(\alpha(1)+z,-\sigma)$, $(\alpha(1)-\rho,0)$, and equation (\ref{eq:obz2}) we have a family of lines
\[
x\;I(x)Y +X-x=0
\] with $x=\alpha(1)-\rho$ and $I(x) = \frac{1}{\tau} e^{-\int_0^1 du \frac{1}{x-\alpha(u)}}$. Taking the derivative with respect to $x$, we get the system of equations
\begin{align*}
& xI(x)Y+X-x=0 \\
& (I(x)+xI'(x))Y -1 =0
\end{align*}
which can be solved to yield the desired parameterization. Note that since $\rho \in [0,\alpha(1)]$, the range of $x$ is $[0,\alpha(1)]$ as well.
\end{proof}

Before moving on, we prove the following proposition. Recall the quantities $T_{i j}/(n + j-i)$ in the definition of a lecture hall tableaux depend on $n$, so apriori the lecture hall tableaux of shape $(\lambda_1, \dots, \lambda_n)$ and $(\lambda_1, \dots, \lambda_n, 0, \dots, 0)$ are not the same.
\begin{prop} \label{prop:translation}
Assuming the tangent method holds, the arctic curve is unchanged when we extend the partition $\lambda = (\lambda_1,\ldots,\lambda_n)$ to $(\lambda_1,\ldots,\lambda_n,0,\ldots,0)$, where we add $m$ parts of size zero and $m$ scales as $m=nM$ in the thermodynamic limit.
\end{prop}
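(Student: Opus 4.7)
My plan is to apply the parameterization of Theorem \ref{thm:main} directly to the extended partition and verify that it coincides, after the natural change of coordinates induced by the different rescaling factor, with the parameterization for the original partition.

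\textbf{Combinatorial setup.} In the path picture, extending $\lambda$ by $m$ zero parts shifts the starting and ending points of the original $n$ paths to the right by $m$, and introduces $m$ additional paths from $(m-j,t-\tfrac{1}{m-j+1})$ to $(m-j,0)$ for $j = 1,\ldots,m$. Since the lecture hall graph has only down and down-right edges, these additional paths are forced to be vertical. So in the unrescaled plane, extending by zeros simply produces the rigid right-shift of the original configuration together with a frozen strip of vertical paths. With $m = nM$, the new length is $\hat n = n(1+M)$, the new height is $\hat\tau = \tau/(1+M)$, and the new limit profile is
\[
\hat\alpha(u) = \begin{cases} \frac{1}{1+M}\bigl(\alpha((1+M)u)+M\bigr), & u \in [0, \tfrac{1}{1+M}], \\ 1-u, & u \in [\tfrac{1}{1+M}, 1], \end{cases}
\]
and rescaling positions by $1/\hat n$ induces the affine map $(\hat X, \hat Y) = \bigl(\tfrac{X+M}{1+M}, \tfrac{Y}{1+M}\bigr)$ on rescaled coordinates. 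The proposition then reduces to showing that Theorem \ref{thm:main} applied to $(\hat\alpha, \hat\tau)$ yields exactly the image of the original curve under this map.

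\textbf{Transformation of $I$ and of the parameterization.} Set $y = (1+M)x - M$. Substituting $v = (1+M)u$ in the first piece of the integral defining $\hat I(x)$ and integrating the second piece (where $\hat\alpha(u) = 1-u$) explicitly, I expect to obtain
\[
\hat I(x) = I(y)\,\frac{y}{(1+M)x}.
\]
Differentiating, and using $dy/dx = 1+M$ together with $y + M = (1+M)x$, a short calculation yields the key identity $\hat I(x) + x\hat I'(x) = I(y) + y I'(y)$, together with $x^2\hat I'(x) = xy I'(y) + \tfrac{M I(y)}{1+M}$. Substituting these into the formulas of Theorem \ref{thm:main} for $(\hat\alpha, \hat\tau)$ gives $\hat Y(x) = Y(y)/(1+M)$ and $\hat X(x) = (X(y) + M)/(1+M)$, precisely the affine map above.

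The only delicate step is the verification of the identity $\hat I(x) + x\hat I'(x) = I(y) + yI'(y)$; once it is in place, the rest is routine substitution. A brief side remark is needed to handle the portion of the parameterization with $x \in [0, M/(1+M)]$ (arising from the $\hat\alpha(u) = 1-u$ piece of $\hat\alpha$): this part is degenerate and represents the vertical boundary of the frozen strip of forced vertical paths rather than a new arctic curve, so it does not contribute.
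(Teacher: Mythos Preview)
Your argument is correct, but it takes a genuinely different route from the paper's. The paper keeps the rescaling by $1/n$, extends $\alpha$ to $[0,1+M]$ by setting $\alpha(u)=1-u$ for $u\in[1,1+M]$, and reruns the tangent-method computation for the first path with the extended partition. The extra product over $j=n+1,\dots,n+m$ becomes the integral $\int_1^{1+M}\ln\bigl(\tfrac{\alpha(0)+\rho+u-1}{\alpha(0)+u-1}\bigr)\,du$, whose contribution to the saddle-point equation produces the factor $(M+\alpha(0)+\rho)/(\alpha(0)+\rho)$ that exactly cancels the $M$-dependence coming from the modified binomial coefficient; the critical-point equation then collapses to the original one and hence gives the same family of tangent lines. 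Only the first-path portion is treated, with ``similarly'' for the rest.

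You instead rescale by $1/\hat n$, record the new data $(\hat\alpha,\hat\tau)$ on $[0,1]$, apply the parameterization of Theorem~\ref{thm:main} to that data, and verify via the substitution $y=(1+M)x-M$ that the resulting curve is precisely the affine image $(\hat X,\hat Y)=\bigl(\tfrac{X+M}{1+M},\tfrac{Y}{1+M}\bigr)$ of the original. The key identity $\hat I(x)+x\hat I'(x)=I(y)+yI'(y)$ is exactly the observation that $x\hat I(x)=\tfrac{yI(y)}{1+M}$ together with $dy/dx=1+M$, and your formula for $x^2\hat I'(x)$ checks out once one uses $y+M=(1+M)x$. This approach is more structural: it treats every portion of the arctic curve simultaneously rather than one outer path at a time, and it makes transparent that the invariance is really a change-of-variables statement about the function $I$. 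The trade-off is that you are invoking Theorem~\ref{thm:main} as a black box, whereas in the paper this proposition sits in the middle of the derivation of that theorem; since the later dual-path computations do not actually cite this proposition, there is no circularity, but it is worth noting.
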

\begin{proof}
We'll show this for the portion of the arctic curve following the first path. After extending $\lambda$, equation (\ref{eq:pfratio1}) becomes
\begin{align}
\frac{Z_r}{Z} & = t^r \prod_{j=2}^{n+m} \frac{\lambda_1 - \lambda_j +j - 1+r}{\lambda_1 - \lambda_j+j-1} \nonumber \\
& = t^r \prod_{j=2}^{n+m} \frac{a_1 - a_j + r}{a_1-a_j}
\end{align}
where now the products range from 2 to $n+m$. With this equation (\ref{eq:pfratio2}) becomes
\begin{equation}
\frac{Z_{qs}}{Z} = \sum_{r=0}^q t^rs^{q-r} \prod_{j=2}^{n+m} \frac{a_1 - a_j + r}{a_1-a_j} \binom{m+a_1+q}{q-r}.
\end{equation}
In the thermodynamic limit (with $m=n M$, and the rest as before), this is dominated by the maximum of
\[
\begin{split}
S(\rho) = \rho\; \ln(\tau) + (z-\rho)\; \ln(\sigma) -(z-\rho)\; \ln(z-\rho) - (M+\alpha(0)+\rho)\ln(M+\alpha(0)+\rho) \\
+\int_0^1 du\; \ln\left( \frac{\alpha(0)+\rho-\alpha(u)}{\alpha(0)-\alpha(u)}\right) + \int_1^{1+M} du \; \ln\left(\frac{\alpha(0)+\rho +u-1}{\alpha(0) +u-1} \right)
\end{split}
\]
This is given when
\[
\frac{\tau (z-\rho)}{\sigma (M+\alpha(0)+\rho)} e^{\int_0^1du\; \frac{1}{\alpha(0)+\rho -\alpha(u)} }\frac{M+\alpha(0)+\rho}{\alpha(0)+\rho} = 1.
\]
Letting $x=\alpha(0)+\rho$, this results in the same parameterization.

The other portion can be done similarly. See Figure \ref{fig:semicircleNormalDual}.
\end{proof}

\begin{figure} [!htb]
\begin{minipage}[c]{0.45\linewidth}
\includegraphics[width=\linewidth]{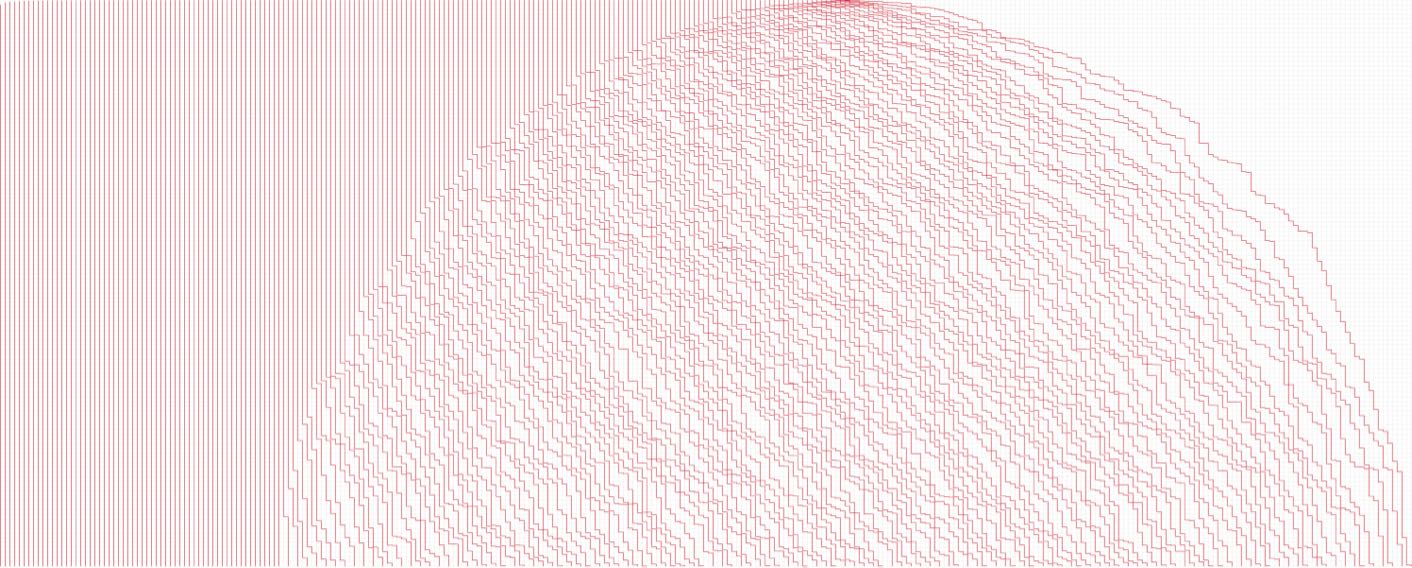}
\end{minipage}
\hfill
\begin{minipage}[c]{0.45\linewidth}
\includegraphics[width=\linewidth]{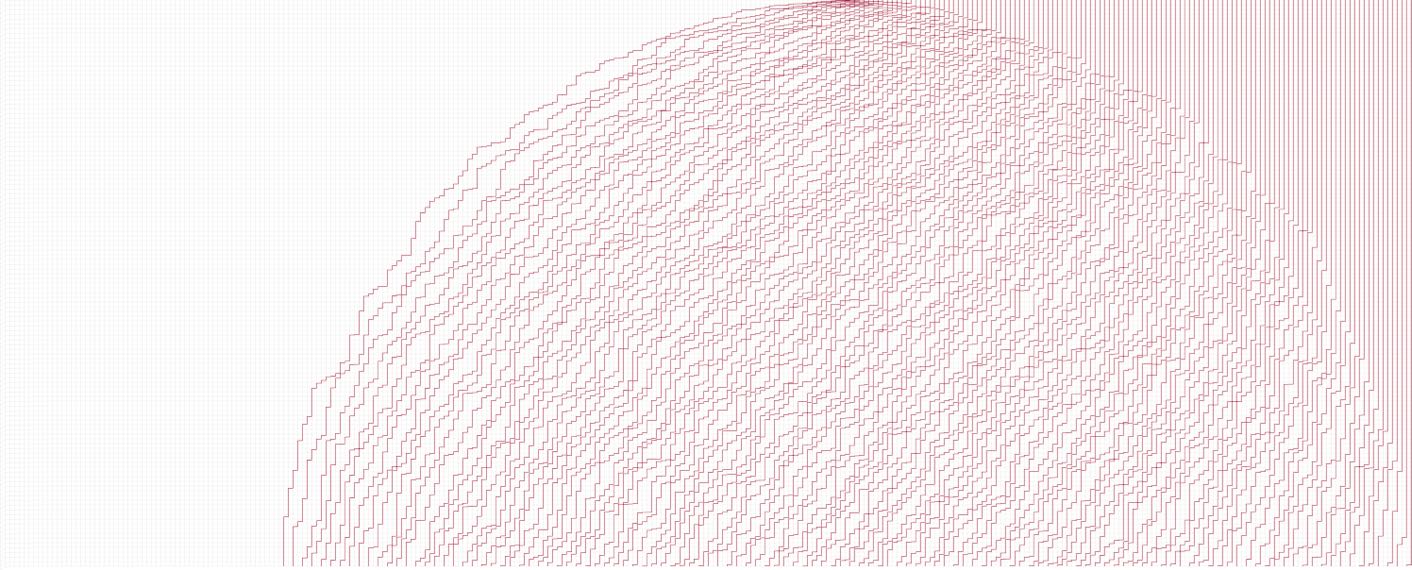}
\end{minipage}%
\caption{On the left, a configuration of paths for $\lambda=(n,n-1,\ldots,1,0,\ldots,0)$, where there are $n/2$ parts of size $0$. On the right, the corresponding dual paths.}
\label{fig:semicircleNormalDual}
\end{figure}

\subsection{Dual Path Formulation} 
Let's consider the analogous calculation in the dual path formulation. In the case, the paths begin at $u_i = (n-1+i-\lambda_i',-\frac{1}{(n+i-\lambda_i')^2})$ and end at $v_j=(n+j-1,t-\frac{1}{n+j-1})$. See Figure \ref{fig:semicircleNormalDual}. Recall that vertical edges that were previously empty now have a path, while vertical edges that previously had a path are now empty. This means that switching to the dual path formulations swaps frozen regions of no paths and frozen regions of vertical paths. In particular, the arctic curve remains the same. From this perspective, portions of the arctic curve on the boundary between an area of frozen vertical paths and a disordered region in the original formulation now follow the trajectory of one of the dual paths. For what follows below we assume $ \lambda $ has $n$ parts all greater than zero. In particular, $\lambda_1'=n$.

By the $k^{th}$ dual path we mean the path beginning at $(n-1+k-\lambda_k', -\frac{1}{(n+k-\lambda_k')^2})$ and ending at $(n+k-1,t-\frac{1}{n+k-1})$. Equivalently, this is the path corresponding to $k^{th}$ column of $\lambda$.

\subsubsection{First Dual Path} To calculate a parameterization for the portion of the arctic curve following the first dual path we must first extend $\lambda$ to $(\lambda_1,\ldots,\lambda_n,0,\ldots,0)$ where we've added $m$ parts of length zero. 

Let the number of BLHT with this shape be $Z$.  In the dual paths picture, moving the start point of the first dual path to the left by $r$ corresponds to changing the first of the $r$ zeros in the extension of our partition to ones; that is, the partition becomes
\[
\mu:=(\lambda_1,\ldots,\lambda_n,\underbrace{1,\ldots,1}_r,\underbrace{0,\ldots,0}_{m-r}).
\] 
We call this partition $\mu$ and the corresponding partition function $Z_r$. Using the product formula for the partition function of the BLHT, we have
\[
\frac{Z_r}{Z}  = t^r \prod_{1\le i<j\le n+m} \frac{\mu_i - \mu_j +j-1}{\lambda_i-\lambda_j +j-i}.
\]
Note that the only terms in the product that are not one come from $1\le i\le n, n<j\le n+r$ and $n<i\le n+r, n+r<j\le n+m$, and the product can be simplified to
\[
\frac{Z_r}{Z}  = t^r \prod_{1\le i\le n < j \le n+r} \frac{\lambda_i - 1 +j-i}{\lambda_i +j-i} \prod_{n< i\le n+r<j\le n+m} \frac{1+j-i}{j-i}.
\]
Fixing $i$, both products are telescoping, giving
\begin{equation*}
\begin{split}
\prod_{n<j\le n+r} \frac{\lambda_i - 1 +j-i}{\lambda_i +j-i}   = \frac{\lambda_i +n-i}{\lambda_i +n+r-i}, \\
 \prod_{ n+r<j\le n+m} \frac{1+j-i}{j-i} = \frac{n+m-1+i}{n+r+1-i}
\end{split}
\end{equation*}
from which we have
\begin{equation}
\frac{Z_r}{Z} = t^r \prod_{i=1}^n \frac{\lambda_i+n-i}{\lambda_i+n+r-i} \prod_{i=n+1}^{n+r} \frac{n+m+1-i}{n+r+1-i}.
\end{equation}

Now consider the possible configuration with the first dual path starting at $(-q,-s-\frac{1}{(m-q+1)^2})$, for some $q$ and $s$ such that $m>q\ge 0$ and $s>0$. See Figure \ref{fig:semicircledualDiag}. Call the partition function $Z_{qs}$. Summing over the possible ways the path can cross the $y=0$ slice, we have
\begin{align}\label{eq:qsfd}
\frac{Z_{qs}}{Z} & = \sum_{r=0}^q \frac{Z_r}{Z} \binom{m-r}{m-q} s^{q-r} \nonumber \\
& = \sum_{r=0}^q t^r s^{q-r}  \prod_{i=1}^n \frac{\lambda_i+n-i}{\lambda_i+n+r-i} \prod_{i=n+1}^{n+r} \frac{n+m+1-i}{n+r+1-i} \binom{m-r}{m-q}  \nonumber \\
& = \sum_{r=0}^q t^r s^{q-r}  \prod_{i=1}^n \frac{a_i}{a_i+r} \prod_{i=n+1}^{n+r} \frac{a_i+m}{a_i+r} \binom{m-r}{m-q} 
\end{align}
where $ \binom{m-r}{q-r} s^{q-r}$ counts the number of configurations of a single path from $(-q,-s-\frac{1}{(m-q+1)^2})$ to $(-r,-\frac{1}{m-r})$.

\begin{figure}[!htb]
\includegraphics[width=\linewidth]{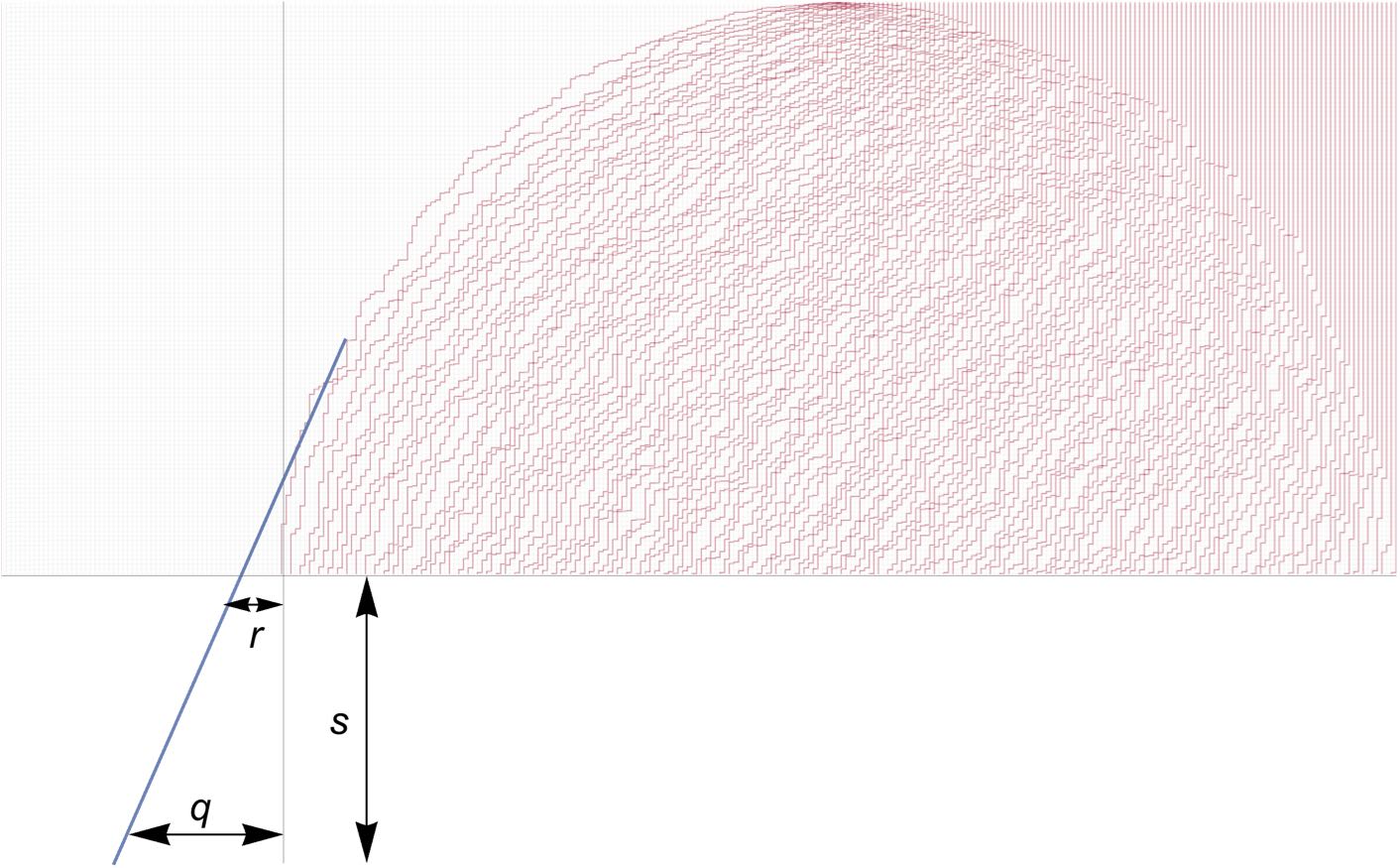}
\caption{A configuration of dual paths corresponding to a BLHT of shape $\lambda = (n,n-1,\ldots,1,0,\ldots,0)$ with $n/2$ parts of size 0. The blue curve represents the trajectory of the extended first dual path, which starts at $(-q,-s-\frac{1}{(n-q+1)^2})$ and passes through $(-r,0)$.}  \label{fig:semicircledualDiag}
\end{figure}

\begin{lem}
Consider the limit $n\to \infty$, for parameters scaling as $t=n\tau$, $s=n\sigma$, $r=n\rho$, $q=n z$, $m=n M$, and $a_i = \lfloor n\alpha\left(\frac{i}{n}\right)\rfloor$. In this limit, the first dual path passes through the point $(-\rho, 0)$, where $\rho$ is related to $z$ by
\begin{equation} \label{eq:obdz1}
z = \rho\frac{\sigma}{\tau} e^{\int_0^1 du \; \frac{1}{\alpha(u)+\rho}} + \rho
\end{equation}
\end{lem}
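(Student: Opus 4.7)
I would follow the same Laplace-method scheme as in Lemma \ref{lem:obz}. Starting from (\ref{eq:qsfd}), I would first make an algebraic simplification: since $i>n$ corresponds to the parts of $\lambda$ that were zero before extension, $a_i+m = n+m+1-i$ and $a_i+r = n+r+1-i$, producing the telescoping identity
$$
\prod_{i=n+1}^{n+r} \frac{a_i+m}{a_i+r} \;=\; \prod_{j=1}^{r} \frac{m+1-j}{r+1-j} \;=\; \binom{m}{r}.
$$
Combining this with the identity $\binom{m}{r}\binom{m-r}{m-q} = \binom{m}{q}\binom{q}{r}$ rewrites (\ref{eq:qsfd}) in the cleaner form
$$
\frac{Z_{qs}}{Z} \;=\; \binom{m}{q}\sum_{r=0}^q \binom{q}{r}\, t^r s^{q-r} \prod_{i=1}^n \frac{a_i}{a_i+r}.
$$

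Next, under the stated scalings, Stirling's formula gives $\binom{q}{r}\sim \exp\!\bigl(n[z\ln z - \rho\ln\rho - (z-\rho)\ln(z-\rho)]\bigr)$ up to a polynomial prefactor, and the remaining product becomes a Riemann sum whose continuum limit is $\exp\!\bigl(n\int_0^1 du\,\ln(\alpha(u)/(\alpha(u)+\rho))\bigr)$. Absorbing all $\rho$-independent factors into a prefactor $C_n$, the sum over $r$ is approximated by the integral
$$
\frac{Z_{qs}}{Z} \;\sim\; C_n \int_0^z d\rho\; e^{n\, S(\rho)},
$$
with
$$
S(\rho) \;=\; \rho\ln\tau + (z-\rho)\ln\sigma + z\ln z - \rho\ln\rho - (z-\rho)\ln(z-\rho) + \int_0^1 du\,\ln\frac{\alpha(u)}{\alpha(u)+\rho}.
$$

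Finally, Laplace's method localizes this integral at the maximizer $\rho^*$ of $S$ on $(0,z)$, which by the tangency heuristic is precisely the most probable $y=0$ crossing, namely $(-\rho^*,0)$. Solving $S'(\rho)=0$ gives
$$
\ln\frac{\tau(z-\rho)}{\sigma\rho} \;=\; \int_0^1 \frac{du}{\alpha(u)+\rho},
$$
which rearranges directly to (\ref{eq:obdz1}). The principal obstacle is the bookkeeping around the extension by zeros---correctly identifying $a_i$ for $i>n$ so as to notice the telescoping that collapses the second product to $\binom{m}{r}$. Once past that step, the saddle-point computation is structurally identical to the one carried out for Lemma \ref{lem:obz}, including the verification (via the sign of $S''(\rho^*)$) that the critical point is genuinely a maximum and is unique in $(0,z)$.
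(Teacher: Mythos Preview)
Your argument is correct and uses the same Laplace-method backbone as the paper, but you organize the combinatorics more cleanly than the paper does. The paper does not perform your telescoping step: it takes the continuum limit of each factor of \eqref{eq:qsfd} separately, obtaining an action
\[
S(\rho)=\rho\ln\tau+(z-\rho)\ln\sigma+(M-\rho)\ln(M-\rho)-(z-\rho)\ln(z-\rho)
+\int_0^1\!\!du\,\ln\frac{\alpha(u)}{\alpha(u)+\rho}
+\int_1^{1+\rho}\!\!du\,\ln\frac{\alpha(u)+M}{\alpha(u)+\rho},
\]
and only then observes that on the extended range $\alpha(u)=1-u$, evaluates the second integral explicitly to $M\ln M-(M-\rho)\ln(M-\rho)-\rho\ln\rho$, and watches the $M$-dependent terms cancel before differentiating. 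Your route---recognizing $\prod_{i=n+1}^{n+r}\frac{a_i+m}{a_i+r}=\binom{m}{r}$ exactly and then using $\binom{m}{r}\binom{m-r}{m-q}=\binom{m}{q}\binom{q}{r}$---factors out all $m$-dependence as the constant $\binom{m}{q}$ \emph{before} any asymptotics. This makes the $M$-independence of the saddle (and hence the content of Proposition~\ref{prop:translation} in this instance) transparent from the start, and reduces the Stirling bookkeeping to the single binomial $\binom{q}{r}$. The paper's version is more mechanical; yours is tidier and gives the same saddle equation $\ln\bigl(\tau(z-\rho)/(\sigma\rho)\bigr)=\int_0^1 du/(\alpha(u)+\rho)$.
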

\begin{proof}
Taking the limit $n\to \infty$, the sum in equation (\ref{eq:qsfd}) above becomes an integral which is dominated by the maximum of
\[
\begin{split}
S(\rho) = \rho\; \ln(\tau) + (z-\rho)\ln(\sigma) + (M -\rho)\ln(M - \rho) - (z-\rho)\ln(z-\rho) \\
+ \int_0^1du\; \ln\left( \frac{\alpha(u)}{\alpha(u)+\rho}\right) + \int_1^{1+\rho}du\; \ln\left(\frac{\alpha(u)+M}{\alpha(u)+\rho}\right) \\ 
\end{split}
\]
Note that for $u\in(1,1+\rho]$, $\alpha(u) = 1-u$. Using this, we compute
\[
\begin{split}
 \int_1^{1+\rho}du\; \ln\left(\frac{\alpha(u)+M}{\alpha(u)+\rho}\right) = \int_1^{1+\rho}du\; \ln\left(\frac{1-u+M}{1-u+\rho}\right) \\
 = M\; \ln(M) - (M - \rho)\ln(M-\rho) - \rho\; \ln(\rho)
 \end{split}
\]
The maximum of $S$ occurs when
\[
\frac{\tau(z-\rho)}{\sigma \; \rho} e^{-\int_0^1du\; \frac{1}{\alpha(u)+\rho}} = 1
\]
Rearranging gives the desired result.
\end{proof}
Finally we have
\begin{thm}
Assuming the tangent method holds, the portion of the arctic curve following the first dual path is parameterized by
\begin{align}
\begin{split}
X(x) = \frac{x^2I'(x)}{I(x) + x I'(x)} \\
Y(x) = \tau \frac{1}{I(x) + x I'(x)}
\end{split}
\end{align}
with $x\in (-\infty,0]$.
\end{thm}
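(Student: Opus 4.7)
The plan is to follow the same envelope construction used for Theorem \ref{thm:ob}, taking as input the two points through which the extended first dual path passes in the thermodynamic limit together with the relation from the preceding lemma. I expect the calculation to produce the very same implicit line equation $\frac{x}{\tau} I(x) Y + X - x = 0$ already encountered, but now for a complementary range of the parameter $x$; the slight subtlety is the sign handling induced by the change of variable $x = -\rho$.

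First, from the lemma the asymptotic trajectory of the first dual path contains the two points $(-\rho,0)$ and $(-z,-\sigma)$, with
\begin{equation*}
z - \rho = \rho\,\frac{\sigma}{\tau}\, e^{\int_0^1 du\;\frac{1}{\alpha(u)+\rho}}.
\end{equation*}
Since the underlying geodesics are straight lines (as justified in Section \ref{subsec:sp}), the tangency assumption tells us that the line through these two points is tangent to the arctic curve. That line is
\begin{equation*}
Y = \frac{\sigma}{z-\rho}\,(X+\rho).
\end{equation*}
Substituting the relation for $z-\rho$ above eliminates $\sigma$ and $z$, leaving a one-parameter family in $\rho$.

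Second, I would make the substitution $x = -\rho$, which is the same change of variable that converts the integrand appearing in the lemma into the function $I(x) = e^{-\int_0^1 du\;\frac{1}{x-\alpha(u)}}$: indeed $\frac{1}{x-\alpha(u)} = -\frac{1}{\alpha(u)+\rho}$, so $I(x) = e^{\int_0^1 du\;\frac{1}{\alpha(u)+\rho}}$. After this substitution the family of lines takes the form
\begin{equation*}
\frac{x}{\tau}\,I(x)\,Y + X - x = 0,
\end{equation*}
which is identical (as a family, after reinterpreting the range of $x$) to the one obtained in the proof of Theorem \ref{thm:ob}. Since $\rho \in [0,\infty)$, the new parameter range is $x \in (-\infty, 0]$, which is exactly the range claimed.

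Finally, the envelope of this family is obtained by differentiating the line equation in $x$ and solving the resulting $2\times 2$ linear system for $(X,Y)$:
\begin{align*}
\frac{x}{\tau} I(x) Y + X - x &= 0,\\
\frac{1}{\tau}\bigl(I(x) + x I'(x)\bigr) Y - 1 &= 0.
\end{align*}
The second equation immediately yields $Y(x) = \tau/(I(x) + x I'(x))$, and substituting back gives $X(x) = x^2 I'(x)/(I(x) + x I'(x))$, as claimed. The only step I would be careful about is the sign bookkeeping in the change of variable $x=-\rho$ and the interpretation of $I(x)$ for negative $x$: the integrand $1/(x-\alpha(u))$ is negative throughout the range of $u$ when $x\le 0$, but it has no singularities there (since $\alpha \ge 0$), so $I(x)$ is well-defined and smooth on $(-\infty,0]$ and the differentiation step is unproblematic.
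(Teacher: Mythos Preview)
Your proposal is correct and follows essentially the same approach as the paper: both write the tangent line through $(-\rho,0)$ and $(-z,-\sigma)$, use the lemma to eliminate $z$, make the substitution $x=-\rho$ to recover the familiar family $\frac{x}{\tau}I(x)Y+X-x=0$, and then take the envelope. Your added remarks on the sign bookkeeping and the regularity of $I(x)$ on $(-\infty,0]$ are welcome clarifications but do not change the argument.
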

\begin{proof}
In the large $n$ limit, the shifted first path passes through the point $(-z,-\sigma)$ and $(-\rho,0)$.  These define the family of lines
\[
Y = \frac{\sigma}{z-\rho} (X+\rho)
\]
Using equation (\ref{eq:obdz1}), and rearranging the above, we have
\[
\frac{x}{\tau}I(x) Y +X-x =0 
\] 
where $x=-\rho\in(-\infty,0]$. Taking the derivative with respect to $x$, we get the system of equations for $(X,Y)$
\begin{align*}
& \frac{x}{\tau}I(x)Y+X-x=0 \\
& \frac{1}{\tau}(I(x)+xI'(x))Y -1 = 0
\end{align*}
which can be solved to yield the desired parameterization.
\end{proof}

\subsubsection{Last Dual Path}
In the case that $\lambda_n'$ is of size proportional to $n$, then this last dual path will be the boundary of a frozen region.  In terms of the original partition, $\lambda_n'$ being macroscopically large means that the first $\lambda_n'$ parts of $\lambda$ are equal to $\lambda_1$. 
Here, for simplicity, we assume $\lambda_1=n$, so that $\alpha(0) = \lim_{n\to\infty} 1+\frac{\lambda_1}{n} -\frac{1}{n} =2$.

We can repeat the above process varying the starting point of the last dual path instead of the first.  We extend $\lambda$ to $(\lambda_1,\ldots,\lambda_n,0,\ldots,0)$. Call the partition function $Z$. Next we decrease $\lambda_n'$ by $r<\lambda_n'$.  This varies the starting point of the $n^{th}$ dual path to the right by $r$. Call the partition function $Z_r$.  In terms of the original partition, this means taking $\lambda_i$ to $\lambda_i-1$ for each $i=\lambda_n',\ldots,\lambda_n'-r+1$. Call this new partition $\mu$. From the product formula (\ref{eq:LHTpf}) we have
\begin{align} \label{eq:LDPprod}
\frac{Z_r}{Z}  & = t^{|\mu|-|\lambda|}\prod_{1\le i<j\le n+m} \frac{\mu_i-\mu_j+j-i}{\lambda_i-\lambda_j+j-i}
\end{align}

The product can be simplified giving
\[
\begin{split}
\frac{Z_r}{Z} = t^{-r} \binom{\lambda_n'}{r} \prod_{\lambda_n'+1\le j \le n+m} \frac{(\lambda_{\lambda_n'} -1) - \lambda_j +j - \lambda_n'}{\lambda_{\lambda_n'-r+1} -\lambda_j + j -(\lambda_n'-r+1)}.
\end{split}
\]

Now extend the first path to start at $(2n-1-\lambda_n'-q,-s-\frac{1}{(2n-\lambda_n'-q)^2})$, with $q\in (-\lambda_n',\infty)$. See Figure \ref{fig:circledualDiag} for a diagram. Call the resulting partition function $Z_{qs}$. This can be written
\begin{align} \label{eq:qsld}
\frac{Z_{qs}}{Z} & = \sum_{r=\max(0,q)}^{\lambda_n'} \frac{Z_r}{Z}  s^{q+r} \binom{m+2n-1-\lambda_n'+r}{m+2n-1-\lambda_n'-q} \nonumber \\
& =  \sum_{r=\max(q,0)}^{\lambda_n'} t^{-r}s^{q+r}  \binom{\lambda_n'}{r}    \prod_{\lambda_n'+1\le j \le n+m} \frac{a_{\lambda_n'}-a_j-1}{a_{\lambda_n'-r+1} -a_j }\binom{m+2n-1-\lambda_n'+r}{m+2n-1-\lambda_n'-q}
\end{align}

\begin{figure}[!htb]
\includegraphics[width=\linewidth]{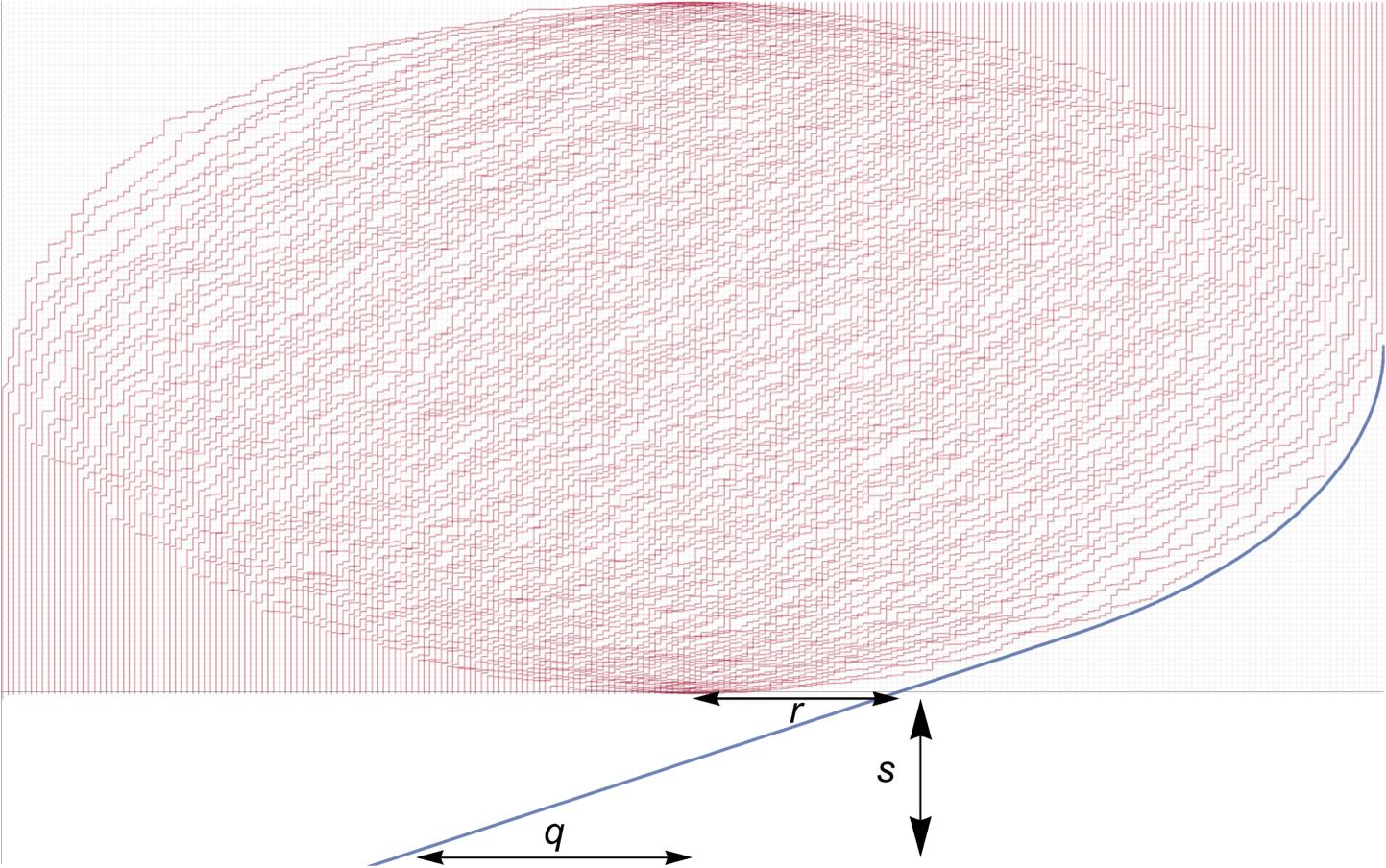}
\caption{A configuration of dual paths corresponding to a BLHT of shape $\lambda =(n,\ldots,n)$. The blue curve represents the trajectory of the extended last dual path, which starts at $(2n-\lambda_n'+q-1, -s-\frac{1}{(2n-\lambda_n'-q)^2})$ and passes through $(2n-\lambda_n'+r-1,0)$. } \label{fig:circledualDiag}
\end{figure}

\begin{lem}
Consider the limit $n\to \infty$, for parameters scaling as $t=n\tau$, $s=n\sigma$, $r=n\rho$, $q=nz$, $m=nM$, $\lambda_n' = n \Lambda$, and $a_i = \lfloor n\alpha(\frac{i}{n})\rfloor$. In this limit, the last dual path passes through the point $(\alpha(\Lambda)+\rho,0)$ where $\rho$ is related to $z$ by
\begin{equation} \label{eq:zdl}
z = \frac{\sigma}{\tau} (\alpha(\Lambda) +\rho) e^{-\int_0^1 du\; \frac{1}{\alpha(\Lambda)+\rho - \alpha(u)}} + \rho
\end{equation}
\end{lem}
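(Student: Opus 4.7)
The plan is to apply Laplace's method to the sum (\ref{eq:qsld}) in direct parallel with the preceding lemma for the first dual path. First, I approximate each of the two binomial coefficients using Stirling's formula: $\binom{\lambda_n'}{r}\sim\exp\bigl(n[\Lambda\ln\Lambda-\rho\ln\rho-(\Lambda-\rho)\ln(\Lambda-\rho)]\bigr)$, and the analogous expression for $\binom{m+2n-1-\lambda_n'+r}{m+2n-1-\lambda_n'-q}$ in terms of $A=M+2-\Lambda+\rho$ and $B=M+2-\Lambda-z$ (with $A-B=\rho+z$). Next I convert the product $\prod_{j=\lambda_n'+1}^{n+m}\tfrac{a_{\lambda_n'}-a_j-1}{a_{\lambda_n'-r+1}-a_j}$ into the Riemann-sum exponential $\exp\bigl(n\int_\Lambda^{1+M} du\,\ln\tfrac{\alpha(\Lambda)-\alpha(u)}{\alpha(\Lambda)+\rho-\alpha(u)}\bigr)$. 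Combining these with the explicit prefactor $t^{-r}s^{q+r}$ and extracting the overall $e^{nz\ln n}$ writes $Z_{qs}/Z$ as $\int d\rho\,A(\rho)e^{nS(\rho)}$ (up to algebraic prefactors) for an explicit action $S(\rho)$.

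The next step is to compute the saddle $\partial_\rho S=0$. The resulting equation takes the form
\[
\ln\tfrac{\sigma}{\tau}+\ln\tfrac{\Lambda-\rho}{\rho}+\ln\tfrac{M+2-\Lambda+\rho}{z+\rho}=\int_\Lambda^{1+M}\frac{du}{\alpha(\Lambda)+\rho-\alpha(u)}.
\]
To reach (\ref{eq:zdl}) I split the integral at $u=1$. On $(1,1+M]$ the extension imposes $\alpha(u)=1-u$, so that piece integrates to $\ln\tfrac{M+2-\Lambda+\rho}{\alpha(\Lambda)+\rho}$ and cancels the $M$-dependent logarithm on the left, consistent with Proposition \ref{prop:translation}. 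On $[0,\Lambda]$ the hypothesis $\lambda_1=n$ forces $\alpha(u)=2-u$, and the principal-value calculation $\mathrm{p.v.}\int_0^\Lambda\tfrac{du}{u-\Lambda+\rho}=\ln\tfrac{\rho}{\Lambda-\rho}$ lets one rewrite $\int_\Lambda^1$ as $\int_0^1-\ln\tfrac{\rho}{\Lambda-\rho}$, absorbing the remaining $\ln\tfrac{\Lambda-\rho}{\rho}$ on the left. What survives is
\[
\ln\tfrac{\sigma}{\tau}+\ln\tfrac{\alpha(\Lambda)+\rho}{z+\rho}=\int_0^1\frac{du}{\alpha(\Lambda)+\rho-\alpha(u)},
\]
which exponentiates to the desired relation between $z$ and $\rho$.

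The main technical subtlety is the principal value interpretation: in the continuum the integrand has a pole at $u=\Lambda-\rho\in(0,\Lambda)$, whereas the finite-$n$ sum is manifestly regular because the discrete $a_j$ never equal $a_{\lambda_n'-r+1}$. The cleanest way to justify the cancellation is to keep $\binom{\lambda_n'}{r}$ grouped with the would-be $j\le\lambda_n'$ factors of the product before applying Stirling, so that the principal-value integral emerges directly as the continuum limit of a manifestly finite quantity, with the divergences from the two sides of the pole cancelling in the Cauchy sense and matching the $\ln\tfrac{\Lambda-\rho}{\rho}$ coming from the binomial prefactor. Once this bookkeeping is in hand, the Laplace maximization is identical in form to that carried out for the first dual path.
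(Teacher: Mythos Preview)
Your approach is essentially the same as the paper's: both apply Laplace's method to the sum~(\ref{eq:qsld}), use the explicit form $\alpha(u)=1-u$ on $[1,1+M]$ to eliminate the $M$-dependence, and use $\alpha(u)=2-u$ on $[0,\Lambda]$ to convert the $\int_\Lambda^1$ integral to $\int_0^1$ while absorbing the $\ln\tfrac{\Lambda-\rho}{\rho}$ coming from $\binom{\lambda_n'}{r}$. The only substantive difference is in bookkeeping: the paper first simplifies the $[1,1+M]$ part of the integral at the level of $S(\rho)$ and then differentiates, whereas you differentiate first and simplify afterwards; and the paper handles the $[0,\Lambda]$ piece by choosing a branch of the complex logarithm (so that $e^{-\int_0^\Lambda}=-\tfrac{\Lambda-\rho}{\rho}$, producing an explicit sign), whereas you phrase the same cancellation via a real principal-value integral. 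Your framing is arguably cleaner, since it ties the principal value directly to the manifestly regular discrete sum and avoids the somewhat delicate branch-choice discussion.
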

\begin{proof}
Taking the limit $n\to \infty$, the sum (\ref{eq:qsld}) is dominated at the maximum of
\begin{equation} \label{eq:sdl}
\begin{split}
S(\rho) &= -\rho\; \ln(\tau) + (z+\rho)\ln(\sigma) + (M+2-\Lambda +\rho)\ln(M+2-\Lambda +\rho) -(z+\rho)\ln(z+\rho) \\
&  - (\Lambda-\rho) \ln(\Lambda-\rho) -\rho\; \ln(\rho) + \int_{\Lambda}^{1+M} du\; \ln\left( \frac{\alpha(\Lambda)-\alpha(u)}{\alpha(\Lambda-\rho) - \alpha(u)}\right)
\end{split} 
\end{equation}
where we have neglected the terms not depending on $\rho$ as they do not effect the location of the critical point.

We note that  $\alpha(u)=2-u$ when $u\in[0,\Lambda]$ and $\alpha(u)=1-u$ for $u\in[1,M]$. It follows that $\alpha(\Lambda)=2-\Lambda$ and $\alpha(\Lambda-\rho)=\alpha(\Lambda) + \rho$ for $\rho\ge 0$. Using these observations we can simplify (\ref{eq:sdl}). The portion of the integral in equation (\ref{eq:sdl}) over $u\in[1,1+M]$ can be expressed as
{\tiny
\begin{align*}
&\int_1^{1+M} du\; \ln\left( \frac{\alpha(\Lambda)-\alpha(u)}{\alpha(\Lambda-\rho) - \alpha(u)} \right) = \int_1^{1+M} du\; \ln\left( \frac{1+u-\Lambda}{1+u-\Lambda +\rho} \right) \\
&= (2-\Lambda+M) \ln(2-\Lambda+M) + (2-\Lambda+\rho)\ln(2-\Lambda+\rho)-(M+2-\Lambda+\rho)\ln(M+2-\Lambda+\rho) - (2-\Lambda)\ln(2-\Lambda).
\end{align*} 
} With this equation  (\ref{eq:sdl}) becomes
\begin{equation} \label{eq:sdl2}
\begin{split}
S(\rho) &= -\rho\; \ln(\tau) + (z+\rho)\ln(\sigma) + (2-\Lambda +\rho)\ln(M+2-\Lambda +\rho) -(z+\rho)\ln(z+\rho) \\
&  - (\Lambda-\rho) \ln(\Lambda-\rho) -\rho\; \ln(\rho) + \int_{\Lambda}^1 du\; \ln\left( \frac{\alpha(\Lambda)-\alpha(u)}{\alpha(\Lambda-\rho) - \alpha(u)}\right)
\end{split} 
\end{equation}
where we have again dropped terms not depending on $\rho$. Note we are left with only the integral over $u\in[\Lambda,1]$. The critical point of (\ref{eq:sdl2}) is given by
\[
\frac{\sigma}{\tau} \frac{\alpha(\Lambda) +\rho}{z+\rho} \frac{ \Lambda-\rho}{\rho} e^{-\int_\Lambda^{1} du\; \frac{1}{\alpha(\Lambda)+\rho - \alpha(u)}} =1.
\] 
Rearranging this gives
\[
z = \frac{\sigma}{\tau} (\alpha(\Lambda) +\rho) \frac{ \Lambda-\rho}{\rho} e^{-\int_\Lambda^{1} du\; \frac{1}{\alpha(\Lambda)+\rho - \alpha(u)}} + \rho.
\]
Finally we note that
\[
e^{-\int_0^\Lambda du\; \frac{1}{\alpha(\Lambda) +\rho - \alpha(u)} }= e^{-\int_0^\Lambda du\; \frac{1}{u-\Lambda +\rho } }= e^{\ln\left(\frac{\rho-\Lambda}{\rho}\right)}.
\] Recall that $\rho<\Lambda$. Choosing the branch of the logarithm along the positive real axis we are left with
\[
e^{-\int_0^\Lambda du\; \frac{1}{\alpha(\Lambda) +\rho - \alpha(u)}} =-\frac{ \Lambda-\rho}{\rho}.
\]
All together we have
\[
z = -\frac{\sigma}{\tau} (\alpha(\Lambda) +\rho) e^{-\int_0^1 du\; \frac{1}{\alpha(\Lambda)+\rho - \alpha(u)}} + \rho
\] 
as desired.
\end{proof}

\begin{thm}
Assuming the tangent method holds, the portion of the arctic curve following the last dual path is parameterized by
\begin{align}
\begin{split}
X(x) = \frac{x^2I'(x)}{I(x) + x I'(x)} \\
Y(x) = \tau \frac{1}{I(x) + x I'(x)}
\end{split}
\end{align}
with $x\in [\alpha(\Lambda),2]$.
\end{thm}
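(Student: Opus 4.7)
The plan is to mirror the arguments already given for the first path, last path, and first dual path, since the preceding lemma provides exactly the analogous input. From that lemma, in the thermodynamic limit the extended last dual path passes with overwhelming probability through the two points $(\alpha(\Lambda)+z,-\sigma)$ and $(\alpha(\Lambda)+\rho,0)$, with $z$ and $\rho$ tied by equation (\ref{eq:zdl}). By the tangency assumption, the segment joining them, continued into the bulk, is tangent to the arctic curve, so as $\rho$ varies over $[0,\Lambda]$ one obtains a one-parameter family of tangent lines whose envelope is the desired portion of the curve.

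Concretely, I would first write the line through the two points in the form
\[
Y \;=\; -\frac{\sigma}{z-\rho}\bigl(X - \alpha(\Lambda) - \rho\bigr),
\]
and then use (\ref{eq:zdl}) to eliminate the ratio $\sigma/(z-\rho)$. Setting $x = \alpha(\Lambda)+\rho$, the integral $\int_0^1 du\,\frac{1}{\alpha(\Lambda)+\rho-\alpha(u)}$ coincides with $\int_0^1 du\,\frac{1}{x-\alpha(u)}$, so the exponential in (\ref{eq:zdl}) turns into exactly $I(x)$ as defined in Theorem \ref{thm:main}. After this substitution the family of tangent lines takes the same canonical form
\[
\frac{x}{\tau}\,I(x)\,Y + X - x \;=\; 0
\]
that already arose in the preceding three parameterization proofs. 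Differentiating this equation with respect to $x$ yields $\frac{1}{\tau}(I(x)+xI'(x))Y - 1 = 0$, and the resulting $2\times 2$ linear system in $(X,Y)$ is solved to produce the stated parameterization.

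It then remains to identify the range of the parameter. Since $r$ ranges over $[0,\lambda_n']$ and $\lambda_n' = n\Lambda$, the rescaled shift satisfies $\rho \in [0,\Lambda]$. Because the first $\lambda_n'$ parts of $\lambda$ are all equal to $\lambda_1 = n$, one has $\alpha(u) = 2-u$ for $u \in [0,\Lambda]$, so $\alpha(\Lambda) = 2-\Lambda$ and $x = \alpha(\Lambda)+\rho$ sweeps the interval $[\alpha(\Lambda),2]$ exactly as required.

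The principal technical subtlety, rather than a serious obstacle, is the careful branch-of-logarithm choice that already surfaced at the end of the proof of the preceding lemma: it is this choice that ensures the sign of $z-\rho$ extracted from (\ref{eq:zdl}) is compatible with a line through a point with positive horizontal shift and negative vertical shift, so that the envelope computation produces a geometrically correct tangent. Once this branch is fixed as in the lemma, the remainder is a routine bookkeeping exercise, and one sees \emph{a posteriori} that the same universal parameterization (\ref{thm:main}) governs the fourth and final boundary arc, confirming the remark following the last-path theorem that a single formula describes every portion of the arctic curve despite the lecture hall graph failing to be $\mathbb{Z}^2$-invariant.
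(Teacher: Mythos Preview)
Your proposal is correct and follows essentially the same approach as the paper: write the line through the crossing point $(\alpha(\Lambda)+\rho,0)$ and the displaced endpoint at height $-\sigma$, use the relation from the preceding lemma to reduce it to the canonical form $\frac{x}{\tau}I(x)Y+X-x=0$ with $x=\alpha(\Lambda)+\rho$, then differentiate in $x$ and solve the resulting linear system, reading off the parameter range from $\rho\in[0,\Lambda]$ and $\alpha(\Lambda)=2-\Lambda$. The only cosmetic discrepancy is that you place the lower endpoint at $(\alpha(\Lambda)+z,-\sigma)$ whereas the paper's proof writes $(\alpha(\Lambda)-z,-\sigma)$; this reflects a sign convention for $q$ that is already inconsistent within the paper (compare the text with the caption of Figure~\ref{fig:circledualDiag}), and both versions collapse to the same line equation once the branch choice from the lemma is invoked, exactly as you note.
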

\begin{proof}
In the large $n$ limit, the last dual path passes through $(\alpha(\Lambda)-z,-\sigma)$ and $(\alpha(\Lambda)+\rho,0)$.
This defines the line
\[
Y= \frac{\sigma}{z+\rho}(X-\alpha(\Lambda)-\rho).
\]
Using equation (\ref{eq:zdl}) and letting $x=\alpha(\Lambda)+\rho$, the above simplifies to
\[
\frac{x}{\tau}I(x)Y+X-x = 0.
\]
As $\rho \in [0,\Lambda]$ we see $x\in[\alpha(\Lambda),2]$. Taking the derivative with respect to $x$, we get the system of equations
\begin{align*}
& \frac{x}{\tau}I(x)Y+X-x=0 \\
& \frac{1}{\tau}(I(x)+xI'(x))Y -1 =0
\end{align*}
which can be solved to yield the desired parameterization.
\end{proof}

\subsection{Examples}
\subsubsection{$\lambda = (n,\ldots,n)$} 
As an example of computing the outer boundary arctic curve consider the case of $\lambda =(n,\ldots,n)$, where $\lambda$ has $n$ parts. In this case, $\alpha(u) = 2-u$, $u\in[0,1]$. We have
\[
I(x) = e^{-\int_0^1 du \frac{1}{x-2+u}} =\frac{x-2}{x-1}
\]
and
\[
I'(x) = \frac{1}{(x-1)^2}.
\]
Plugging this into our parameterization (equation (\ref{thm:main})) we have
\begin{equation}
\begin{split}
X(x) = \frac{x^2}{x^2-2x+2} \\
Y(x) = \frac{\tau (x-1)^2}{x^2-2x+2}
\end{split}
\end{equation}
for $x\in \RR$. Eliminating the parameter we get a formula for the arctic curve
\[
(X - 1)^2 + \left(\frac{2 Y - \tau}{\tau}\right)^2 = 1.
\]
See Figures \ref{fig:circle} and \ref{fig:circlet4} for $\tau = 1$ and $\tau = 4$ respectively.

\begin{figure}[!htb] 
\includegraphics[width=\linewidth]{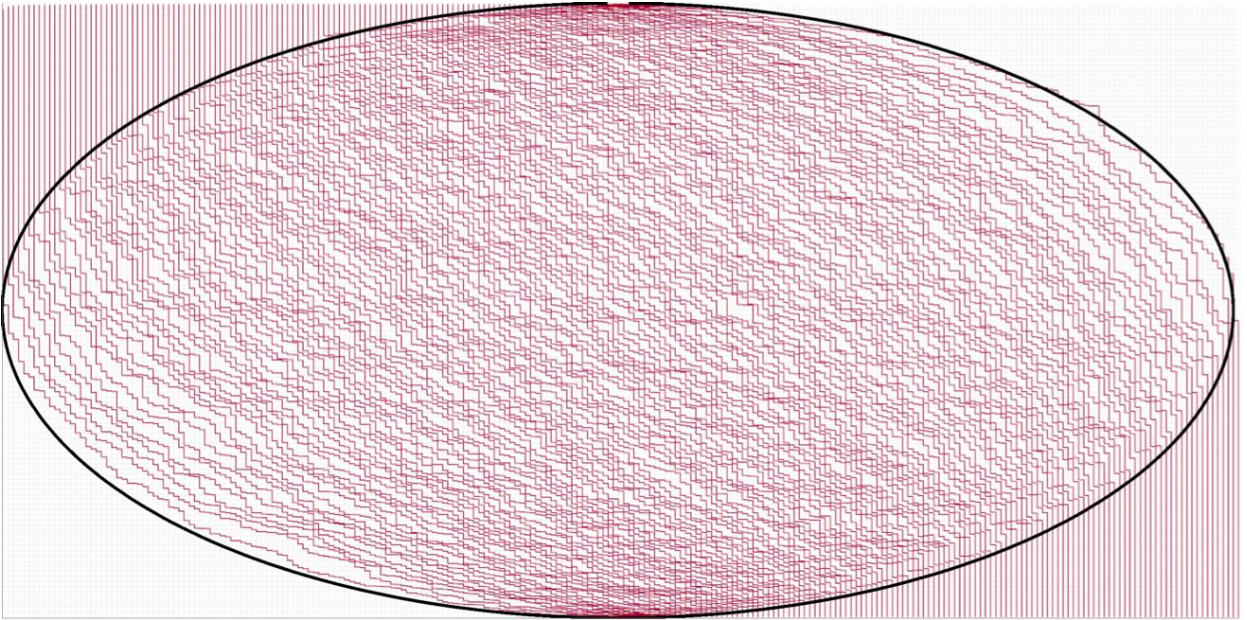}
\caption{A uniformly sampled configuration of paths corresponding to a BLHT of shape $\lambda=(n,\ldots,n)$, with $n=t=120$. In black is the computed arctic curve.}
 \label{fig:circle}
\end{figure}

\begin{figure}[!htb] 
\includegraphics[height=.6\linewidth]{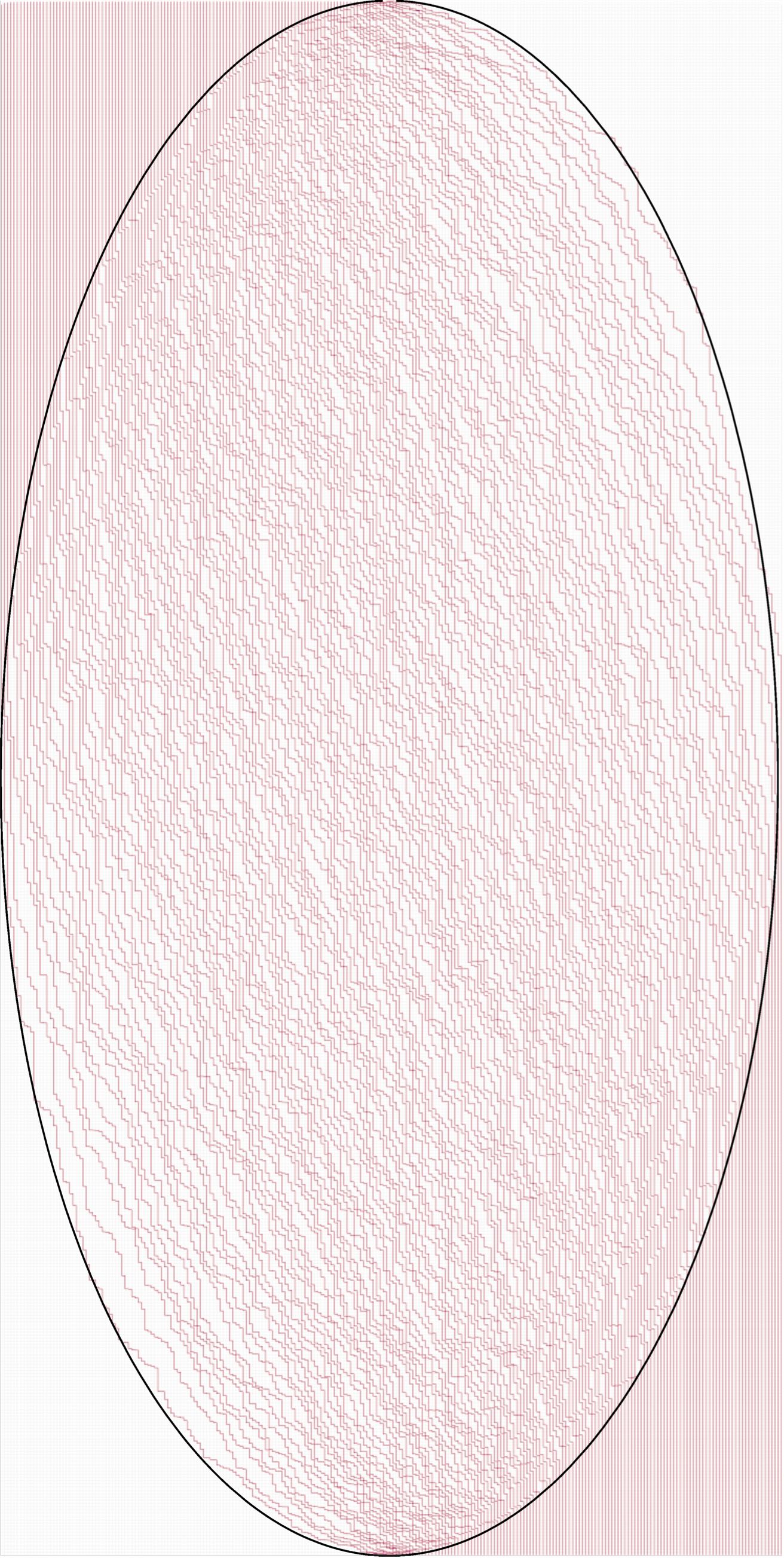}
\caption{A uniformly sampled configuration of paths corresponding to a BLHT of shape $\lambda=(n,\ldots,n)$, with $n=120$ and $t=480$. In black is the computed arctic curve.}
 \label{fig:circlet4}
\end{figure}

\subsubsection{$\lambda = (n,n-1,\ldots,1)$} 
As a second example consider $\lambda = (n,n-1,\ldots,1)$. Here $\alpha(u) = 2-2u$, $u\in[0,1]$. We have
\[
I(x) = e^{-\int_0^1 du \frac{1}{x-2+2u}} =  \sqrt{\frac{x-2}{x}}
\]
and
\[
I'(x) = \frac{1}{x^2} \sqrt{\frac{x}{x-2}}.
\]
This results in the parameterization
\begin{equation}
\begin{split}
X(x) =  \frac{x}{x-1}\\
Y(x) = \frac{\tau \sqrt{x(x-2)}}{|x-1|}
\end{split}
\end{equation}
for $x\in(-\infty,0] \cup [2,\infty)$. This results in the arctic curve
\[
(X-1)^2 +\left( \frac{Y}{\tau}\right)^2 = 1, \hspace{1cm} Y\ge 0.
\]
See Figure \ref{fig:semicircle}.

\begin{figure}[!htb] 
\includegraphics[width=\linewidth]{semicircle120.jpg}
\caption{A uniformly sampled configuration of paths corresponding to a BLHT of shape $\lambda=(n,n-1,\ldots,1)$, with $n=t=120$. In black is the computed arctic curve.}
 \label{fig:semicircle}
\end{figure}

\subsubsection{$\lambda = ((p-1)n,\ldots,(p-1)n)$}
As a generalization of the first example above, we consider $\lambda$ of the form $\lambda = ((p-1)n,  \ldots , (p-1)n)$ where $p\in \mathbb{N}$, $p>1$, is fixed. The limiting profile is $\alpha(u) = p - u$. We see that
\[
I(x) = e^{-\int_0^1 du \frac{1}{x-p+u}} =  \frac{x-p}{x-p+1}
\]
and
\[
I'(x) = \frac{1}{(x-p+1)^2}
\]
with $x\in\RR$. This gives the parameterization
\begin{equation}
\begin{split}
X(x) = \frac{x^2}{x^2-2(p-1)x+p(p-1)}\\
Y(x) = \tau\frac{(x+p-1)^2}{x^2-2(p-1)x+p(p-1)}
\end{split}
\end{equation}
which gives the arctic curve
\[
(X-p+1)^2+\left(\frac{p}{\tau} Y -p+1\right)^2 + \frac{2p-4}{\tau} XY  = p^2-2p+1.
\]
See Figure \ref{fig:ellipse}.

\begin{figure}[!htb] 
\includegraphics[width=\linewidth]{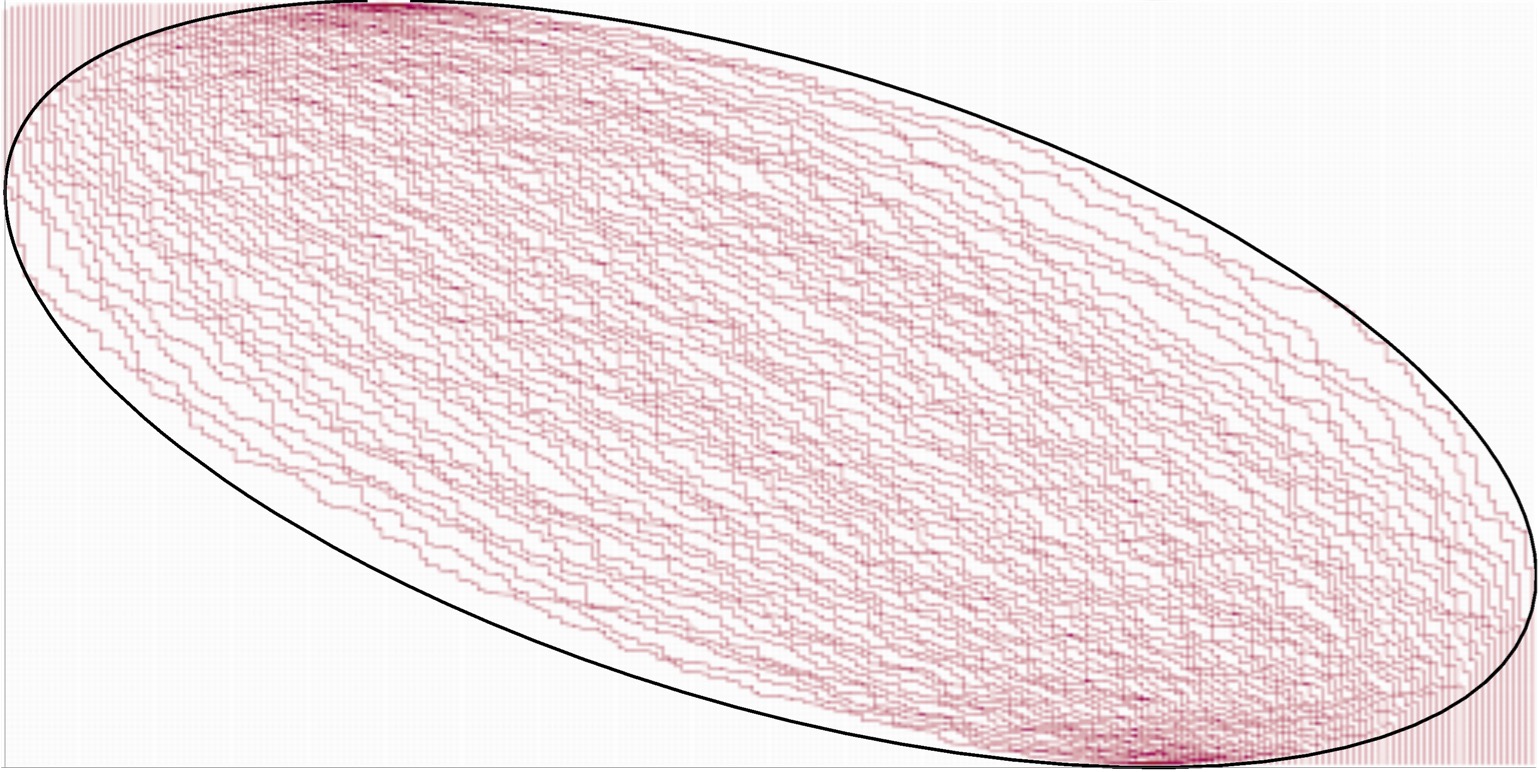}
\caption{A uniformly sampled configuration of paths corresponding to a BLHT of shape $\lambda= ((p-1)n,  \ldots , (p-1)n)$, with $p=4$, $n=60$, $t=120$. In black is the computed arctic curve.}
 \label{fig:ellipse}
\end{figure}

\subsubsection{$\lambda = ((p-1)n, (p-1)(n-1), \ldots , p-1) $}
As a generalization of the second example above, we consider $\lambda$ of the form $\lambda = ((p-1)n, (p-1)(n-1), \ldots , p-1)$ where $p\in \mathbb{N}$, $p>1$, is fixed. In this case, the limiting profile is $\alpha(u) = p(1-u)$. Computing the parameterization we have
\[
I(x) = e^{-\int_0^1 du \frac{1}{x-p+pu}} =  \left(\frac{x-p}{x} \right)^{\frac{1}{p}}
\]
with $x\in(-\infty,0]\cup[p,\infty)$. This gives
\begin{equation}
\begin{split}
X(x) = \frac{x}{x-p+1}\\
Y(x) = \tau\frac{x-p}{x-p+1} \left(\frac{x}{x-p} \right)^{\frac{1}{p}}.
\end{split}
\end{equation}
Eliminating the parameter leaves us with the arctic curve
\[
Y = \tau\frac{X-p}{1-p} \left(\frac{(p-1)X}{X-p} \right)^{\frac{1}{p}}
\]
or
\[
\left((1-p)\frac{Y}{\tau}\right)^{p-1} =  X (X-p)^{p-1} .
\]
Note that this example can provide algebraic curves of degree higher than 2.  See Figure \ref{fig:semithing}. 

\begin{figure}[!htb] 
\includegraphics[width=\linewidth]{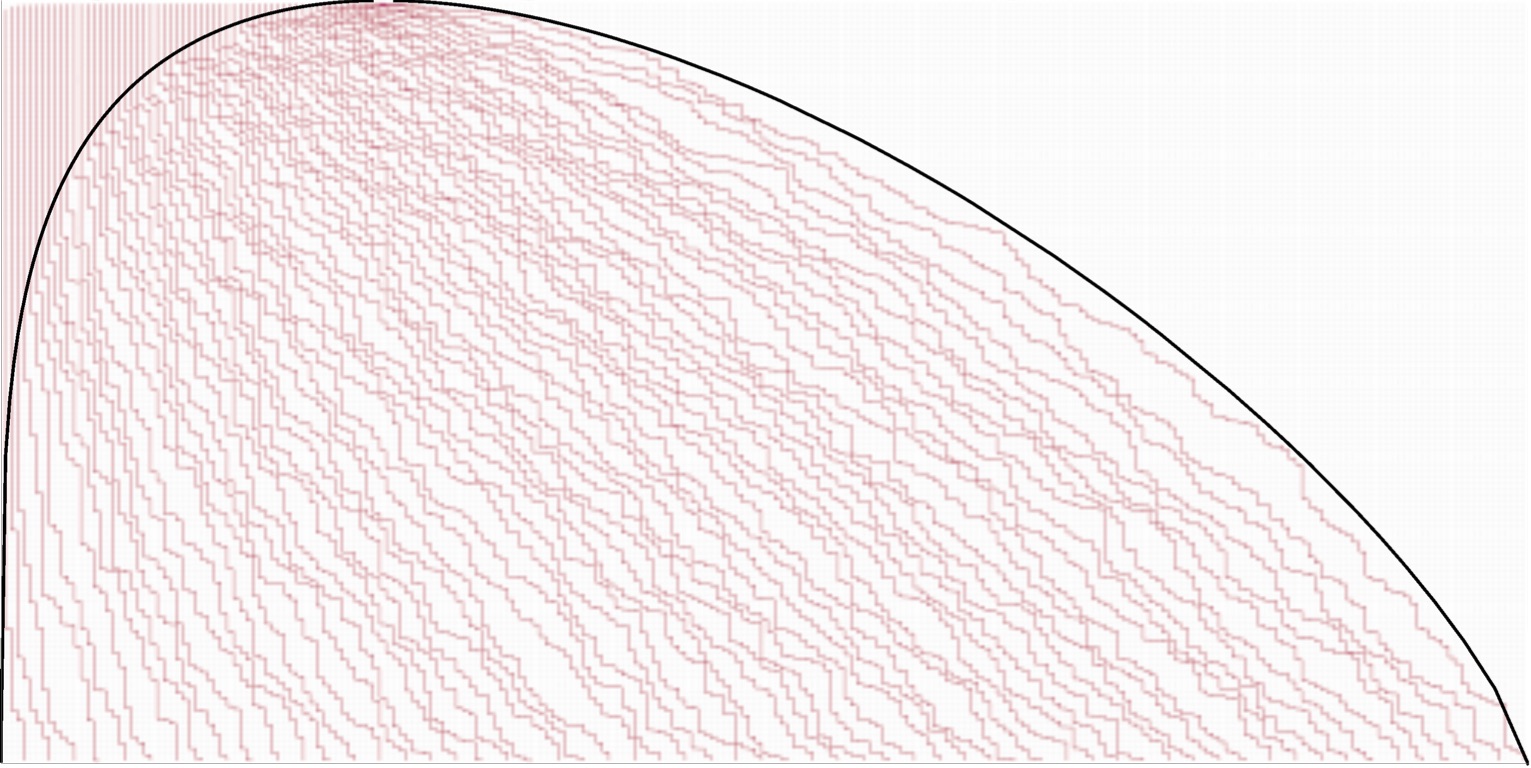}
\caption{A uniformly sampled configuration of paths corresponding to a BLHT of shape $\lambda= ((p-1)n, (p-1)(n-1), \ldots , p-1)$, with $p=4$, $n=60$, $t=120$. In black is the computed arctic curve.}
 \label{fig:semithing}
\end{figure}

\subsection{Freezing Boundaries}
Besides the outer boundary, other portions of the arctic curve can exist at the so called ``freezing boundaries". These occur when the choice of partition freezes a section of paths near the bottom boundary of the domain. When this occurs there will be a new portion of arctic curve separating this frozen region from the disordered region in the bulk. For example, taking $\lambda = (2n,\ldots,2n,n,n-1,\ldots,1)$ where $\lambda$ has $2n$ parts with $n$ having value $2n$, we see a frozen region of no paths resulting in an arctic curve taking the form of a cusp. See Figure \ref{fig:cuspRightDiag}. In general, these frozen regions will come from a macroscopic jump in either the description of the partition $\lambda$, or of the conjugate partition $\lambda'$. That is, either $\lambda_k-\lambda_{k+1}$ or $\lambda'_k-\lambda_{k+1}'$ are linear in $n$, for some row or column of $\lambda$. The following analysis is very similar to that of \cite{DR}.

\begin{remark}
In the analogous problem on the square grid, there are three possible types of frozen region: empty (no paths), horizontal paths, and vertical paths. The parameterization of these freezing boundaries was worked out in \cite{DR}. In our case, the lecture hall tableaux do not appear to develop frozen regions of horizontal paths.
\end{remark}

\subsubsection{Boundary of an empty region}
Consider a BLHT of shape $\lambda$  such that $\lambda$ induces a frozen region of no paths along the bottom boundary of the domain. This means, for some $k$, we must have that $\lambda_{k-1}-\lambda_{k}$ is linear in $n$. In terms of the paths this means that the endpoints of the $(k-1)^{st}$ and $k^{th}$ path satisfy $a_{k-1} -a_{k}$ being linear in $n$.  As the region is empty in the thermodynamic limit, the left-most portion of the arctic curve will follow the path ending at $(n+\lambda_k -k,0)$. By varying the endpoint of this path we will be able to parameterize the arctic curve bounding this frozen region. See Figure \ref{fig:cuspRightDiag} for a diagram.

\begin{figure}
\includegraphics[width=\linewidth]{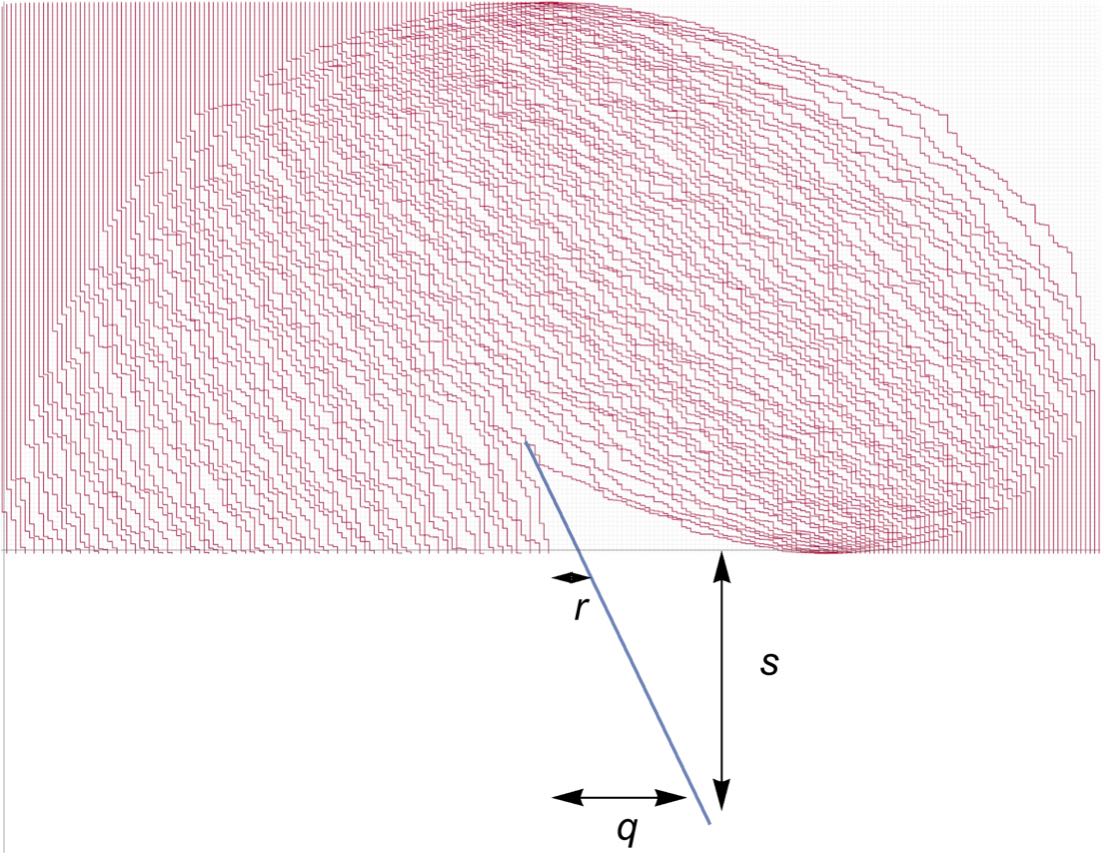}
\caption{A configuration of paths corresponding to a BLHT of shape $\lambda = (2n,\ldots,2n,n,\ldots,1)$. The blue curve represents the trajectory of the extended $k^{th}$ path, which ends at $(n+\lambda_k-k+q,-s)$ and passes through $(n+\lambda_k-k+r,0)$.}
 \label{fig:cuspRightDiag}
\end{figure}

To implement the tangent method, we follow the same procedure as for the outer boundary. Let the total number of configurations be $Z$. Define $\Delta$ through $a_{k-1}-a_{k} \sim n\Delta$. It is the asymptotic size of the jump between the endpoints of the $(k-1)^{st}$ and $k^{th}$ path. Suppose we move the ending point of the $k^{th}$ path to $(n+\lambda_k-k+r,0)$ where $r\in[0,n\Delta)$. Call the new partition function $Z_r$. The ratio of partition functions is
\begin{align}
\frac{Z_r}{Z} & = t^r \prod_{1\le i \le k-1} \frac{\lambda_i - \lambda_k -r+k-i}{\lambda_i - \lambda_k + k - i} \prod_{k+1\le j\le n} \frac{\lambda_k + r -\lambda_j +j-k}{ \lambda_k - \lambda_j +j - k} \nonumber \\
& =  t^r \prod_{1\le j \le n, j\ne k} \frac{\lambda_k + r -\lambda_j +j-k}{ \lambda_k - \lambda_j +j - k}.
\end{align}
Now suppose the $k^{th}$ path is extended to end at $(n+\lambda_k-k+q,-s)$ with $q\in[0,\infty)$. Call the partition function $Z_{qs}$. As before, we can write $Z_{qs}$ as
\begin{align} \label{eq:fbE}
\frac{Z_{qs}}{Z} & = \sum_{r=0}^{\min(q,n\Delta)} \frac{Z_r}{Z} s^{q-r} \binom{n+\lambda_k-k+q}{q-r} \nonumber \\
& =  \sum_{r=0}^{\min(q,n\Delta)} t^r s^{q-r} \prod_{1\le j \le n, j\ne k} \frac{a_k - a_j+r}{ a_k-a_j} \binom{a_k+q}{q-r}.
\end{align}

\begin{lem} \label{lem:fbE}
Consider the limit $n\to \infty$, for parameters scaling as $t=n\tau$, $s=n\sigma$, $r=n\rho$, $q=nz$, $a_i = \lfloor n\alpha(\frac{i}{n})\rfloor$, $a_{k-1}-a_{k} = n\Delta$, and $\frac{k}{n} = v$ . In this limit, the $k^{th}$ path passes though the point $(\alpha(v)+\rho,0)$, with $\rho$ related to $z$ by
\begin{equation} \label{eq:fbEz}
z = (\alpha(v) + \rho) \frac{\sigma}{\tau} e^{-\int_0^1du \frac{1}{\alpha(v)+\rho-\alpha(u)}} +\rho.
\end{equation}
\end{lem}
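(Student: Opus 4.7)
The proof should follow the template of Lemma \ref{lem:obz} very closely, with only minor adjustments to account for the fact that the distinguished path is in the bulk (index $k$ with $k/n \to v \in (0,1)$) rather than at the boundary ($k=1$). The plan is to extract the dominant behavior of the sum in equation (\ref{eq:fbE}) by Stirling's approximation of the binomial coefficient and a Riemann-sum approximation of the logarithm of the product, then apply Laplace's method.

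First I would apply Stirling to the binomial coefficient $\binom{a_k+q}{q-r}$ using $a_k \sim n\alpha(v)$, $q = nz$, $r = n\rho$, obtaining an exponent of the form $n\bigl[(\alpha(v)+z)\ln(\alpha(v)+z) - (\alpha(v)+\rho)\ln(\alpha(v)+\rho) - (z-\rho)\ln(z-\rho)\bigr]$ up to an algebraic prefactor. Next I would rewrite the product as the exponential of a sum, and observe that for $j \ne k$ the ratios $(a_k - a_j + r)/(a_k - a_j)$ scale so that
\[
\sum_{j=1, j\ne k}^{n} \ln\!\left(\frac{a_k - a_j + r}{a_k - a_j}\right) \sim n\int_0^1 du\, \ln\!\left(\frac{\alpha(v) - \alpha(u) + \rho}{\alpha(v) - \alpha(u)}\right),
\]
since removing the single $j=k$ term changes the sum by $O(1)$ and so contributes only subleading corrections. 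Combining everything with $t^r s^{q-r} = e^{n[\rho\ln\tau + (z-\rho)\ln\sigma]}$ yields $Z_{qs}/Z \sim \text{(prefactor)}\int_0^{\min(z,\Delta)} d\rho\, e^{nS(\rho)}$ where $S(\rho)$ is the sum of the three contributions above.

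Then I would find the critical point by differentiating $S$:
\[
S'(\rho) = \ln\tau - \ln\sigma - \ln(\alpha(v)+\rho) + \ln(z-\rho) + \int_0^1 \frac{du}{\alpha(v) + \rho - \alpha(u)} = 0,
\]
which rearranges immediately to
\[
\frac{\tau (z-\rho)}{\sigma(\alpha(v)+\rho)}\, e^{\int_0^1 du/(\alpha(v)+\rho-\alpha(u))} = 1,
\]
i.e., the claimed relation (\ref{eq:fbEz}). Laplace's method then pins the location of the path crossing the horizontal slice at height $0$ to $X = \alpha(v) + \rho^\ast$.

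The main technical subtlety is the interior singularity at $u = v$ in the integrals appearing in $S(\rho)$ and $S'(\rho)$: the integrand $\ln((\alpha(v)-\alpha(u)+\rho)/(\alpha(v)-\alpha(u)))$ has a logarithmic singularity at $u=v$ (integrable), while $1/(\alpha(v)+\rho-\alpha(u))$ remains bounded near $u=v$ as long as $\rho > 0$ and $\alpha$ is differentiable with $\alpha'(v) \ne 0$. One must also check that the excluded $j=k$ term does not spoil the convergence of the discrete sum to the continuous integral (a standard argument bounding the contribution of a single $O(1/\rho)$ term), and verify that the critical point $\rho^\ast$ lies strictly inside $[0, \Delta)$ so that the endpoint of the integration range does not interfere; this is where the assumption that the frozen empty region is genuinely present, and in particular that $\Delta$ is macroscopic, is essential.
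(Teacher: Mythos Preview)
Your proposal is correct and follows essentially the same approach as the paper: Stirling on the binomial, Riemann-sum approximation of the logarithm of the product, and Laplace's method to locate the saddle, yielding the same critical-point equation and hence (\ref{eq:fbEz}). Your additional remarks on the integrable singularity at $u=v$, the negligible effect of omitting the $j=k$ term, and the need for $\rho^\ast\in[0,\Delta)$ go slightly beyond what the paper's proof spells out, but the method is identical.
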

\begin{proof}

In the $n\to \infty$ limit, equation (\ref{eq:fbE}) takes the form
\begin{equation}
\frac{Z_{qs}}{Z} \sim \sqrt{\frac{n}{2\pi}} e^{n\,z\; \ln(n)} \int_0^{\Delta} d\rho \; \sqrt{\frac{\alpha(v)+z}{(\alpha(v)+\rho)(z-\rho)}}e^{n\,S(\rho)} 
\end{equation} where
\begin{equation*}
\begin{split}
S(\rho) = \rho \; \ln(\tau) - (z-\rho)\ln(\sigma) - (\alpha(v) + \rho) \ln(\alpha(v)+\rho) - (z-\rho) \ln(z-\rho) \\
+ \int_0^1 du\; \ln\left(\frac{\alpha(v)+\rho - \alpha(u)}{\alpha(v)-\alpha(u)} \right).
\end{split}
\end{equation*}
The critical point occurs when
\[
\frac{\tau}{\sigma} \frac{z-\rho}{\alpha(v) + \rho} e^{\int_0^1 du \frac{1}{\alpha(v)+\rho -\alpha(u)}} = 1.
\]
Rearranging we arrive at equation (\ref{eq:fbEz}).
\end{proof}
Using Lemma \ref{lem:fbE}, we get a parameterization of the desired portion of the arctic curve.
\begin{thm}
Assuming the tangent method holds, the portion of the arctic curve bounding such a frozen region is parameterized by
\begin{align}
\begin{split}
X(x) = \frac{x^2I'(x)}{I(x) + x I'(x)} \\
Y(x) = \tau\frac{1}{I(x) + x I'(x)}
\end{split}
\end{align}
with $x\in [\alpha(v),\alpha(v)+\Delta]$.
\end{thm}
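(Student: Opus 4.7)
The plan is to follow the same envelope-of-tangent-lines strategy used in the proofs of Theorems~\ref{thm:ob} and its analogs, now applied to the one-parameter family produced by Lemma~\ref{lem:fbE}. Concretely, by the tangency assumption the extended $k^{th}$ path travels in a straight line between the exit point $(\alpha(v)+\rho,0)$ and the shifted endpoint $(\alpha(v)+z,-\sigma)$; as $\rho$ varies, these lines are tangent to the arctic curve, so the curve can be recovered as the envelope.

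The first step is to write down the line through these two points,
\begin{equation*}
Y = -\frac{\sigma}{z-\rho}\bigl(X - \alpha(v) - \rho\bigr).
\end{equation*}
Next I would use the relation \eqref{eq:fbEz} from Lemma~\ref{lem:fbE} to eliminate $\sigma/(z-\rho)$, obtaining
\begin{equation*}
\frac{\sigma}{z-\rho} \;=\; \frac{\tau}{\alpha(v)+\rho}\, e^{\int_0^1 du\,\frac{1}{\alpha(v)+\rho-\alpha(u)}}.
\end{equation*}
Setting $x = \alpha(v)+\rho$ and recognizing $I(x) = e^{-\int_0^1 du\,\frac{1}{x-\alpha(u)}}$, this collapses to $\sigma/(z-\rho) = \tau/(x I(x))$, so the tangent line becomes the clean one-parameter family
\begin{equation*}
\frac{x}{\tau}\,I(x)\,Y + X - x \;=\; 0.
\end{equation*}

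To find the envelope, I would differentiate this equation with respect to $x$, yielding
\begin{equation*}
\frac{1}{\tau}\bigl(I(x) + x I'(x)\bigr) Y - 1 \;=\; 0.
\end{equation*}
Solving the two equations simultaneously for $(X,Y)$ gives $Y(x) = \tau/(I(x)+xI'(x))$ and then $X(x) = x - (x/\tau)I(x)Y(x) = x^2 I'(x)/(I(x)+xI'(x))$, which is the asserted parameterization. Finally, the allowed range $\rho \in [0,\Delta]$ imposed in Lemma~\ref{lem:fbE} translates directly to $x \in [\alpha(v),\alpha(v)+\Delta]$.

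The only nontrivial point in this proof is the reduction leading to $\sigma/(z-\rho) = \tau/(x I(x))$; everything else is mechanical envelope computation of the kind already carried out in the outer-boundary and dual-path cases. In particular, the identical form of the parameterization across all boundary regimes of the arctic curve is not a new phenomenon to establish here but rather a direct consequence of the fact that Lemma~\ref{lem:fbE} produces exactly the same critical-point relation as the earlier lemmas, differing only in the base point $\alpha(v)$ and the admissible range of the parameter. Thus no essentially new input beyond Lemma~\ref{lem:fbE} is needed, and the main (minor) obstacle is simply checking that the branch of $I(x)$ and the range of $x$ are consistent over the entire frozen-region interval $[\alpha(v),\alpha(v)+\Delta]$.
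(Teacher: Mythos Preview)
Your proposal is correct and follows essentially the same approach as the paper: you write the tangent line through $(\alpha(v)+\rho,0)$ and $(\alpha(v)+z,-\sigma)$, use Lemma~\ref{lem:fbE} to reduce it to $\frac{x}{\tau}I(x)Y + X - x = 0$ with $x=\alpha(v)+\rho$, and compute the envelope by differentiating in $x$. If anything, you spell out the elimination step $\sigma/(z-\rho) = \tau/(xI(x))$ more explicitly than the paper does.
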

\begin{proof}
From the points $(\alpha(v)+z,-\sigma)$, $(\alpha(v)+\rho,0)$, and Lemma \ref{lem:fbE} we have a family lines
\[
\frac{x}{\tau}I(x)Y +X-x=0
\] with $x=\alpha(v)+\rho$. Taking the derivative with respect to $x$, we get the system of equations
\begin{align*}
& \frac{x}{\tau}I(x)Y+X-x=0 \\
& \frac{1}{\tau} (I(x)+xI'(x))Y -1 = 0
\end{align*}
which can be solved to yield the desired parameterization. Note that since $\rho \in [0,\Delta)$, the range of $x$ is $[\alpha(v),\alpha(v)+\Delta]$.
\end{proof}

\subsubsection{Boundary of a vertically frozen region} Now suppose we have a frozen region of vertical paths. In terms of the partition $\lambda$, there exists integers $1\le a< b \le n$ such that $\lambda_a = \lambda_{a+1} = \ldots = \lambda_b$, and $b-a$ is proportional to $n$. In terms of the dual partition, this means there exists $k$ such that  $\lambda_k' -\lambda_{k+1}'$ is proportional to $n$. 

In the path description we see a frozen region of vertical paths, which in the dual paths description becomes an empty frozen region whose left boundary follows the $k^{th}$ dual path in the large $n$ limit. See Figure \ref{fig:cuspLeftwDual} for an example such a frozen region and Figure \ref{fig:cuspLeftDiag} for a diagram.

\begin{figure} [!htb]
\begin{minipage}[c]{0.45\linewidth}
\includegraphics[width=\linewidth]{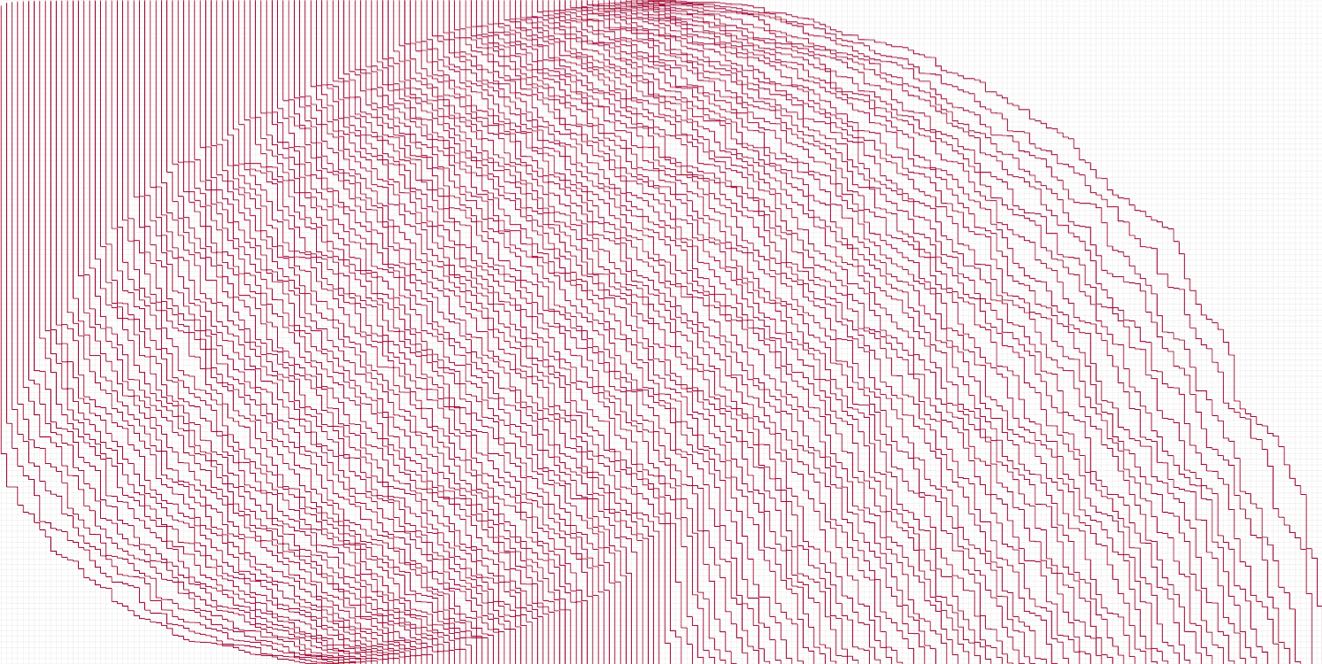}
\end{minipage}
\hfill
\begin{minipage}[c]{0.45\linewidth}
\includegraphics[width=\linewidth]{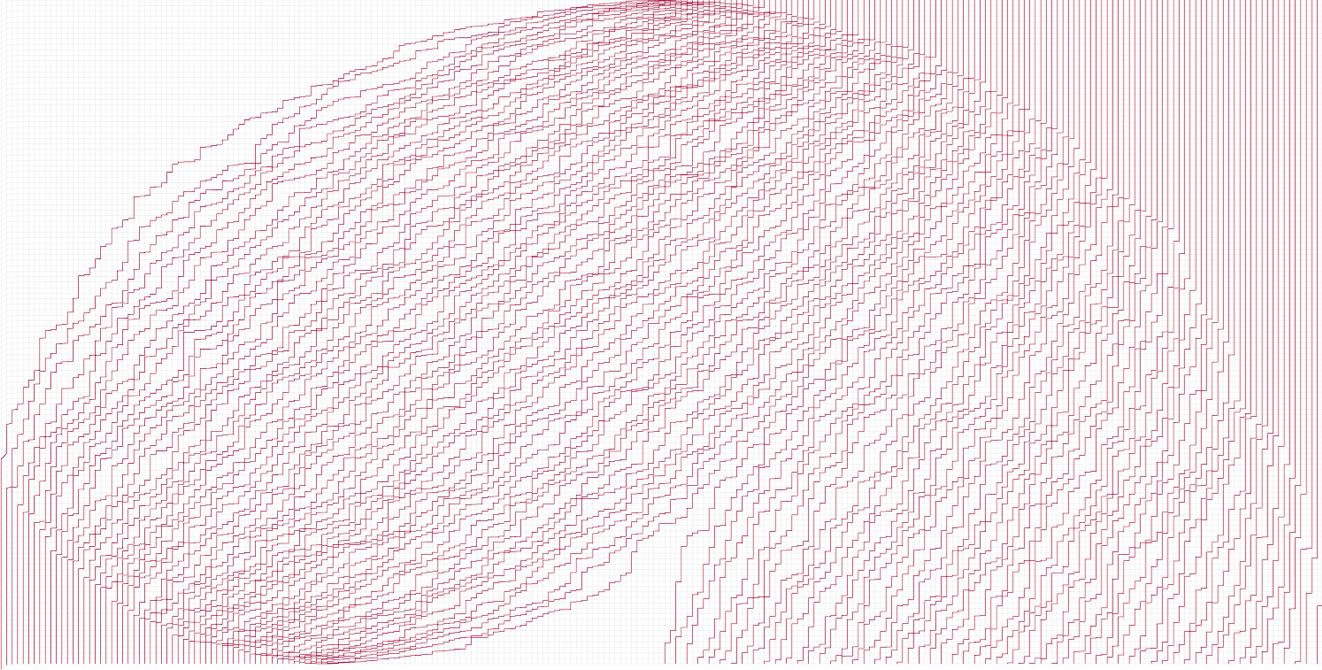}
\end{minipage}%
\caption{A configuration of paths with the corresponding dual paths for a BLHT of shape $\lambda =(2n,2n-1,\ldots,n+1,n,\ldots,n)$.}
\label{fig:cuspLeftwDual}
\end{figure}

\begin{figure}[!htb]
\includegraphics[width=\linewidth]{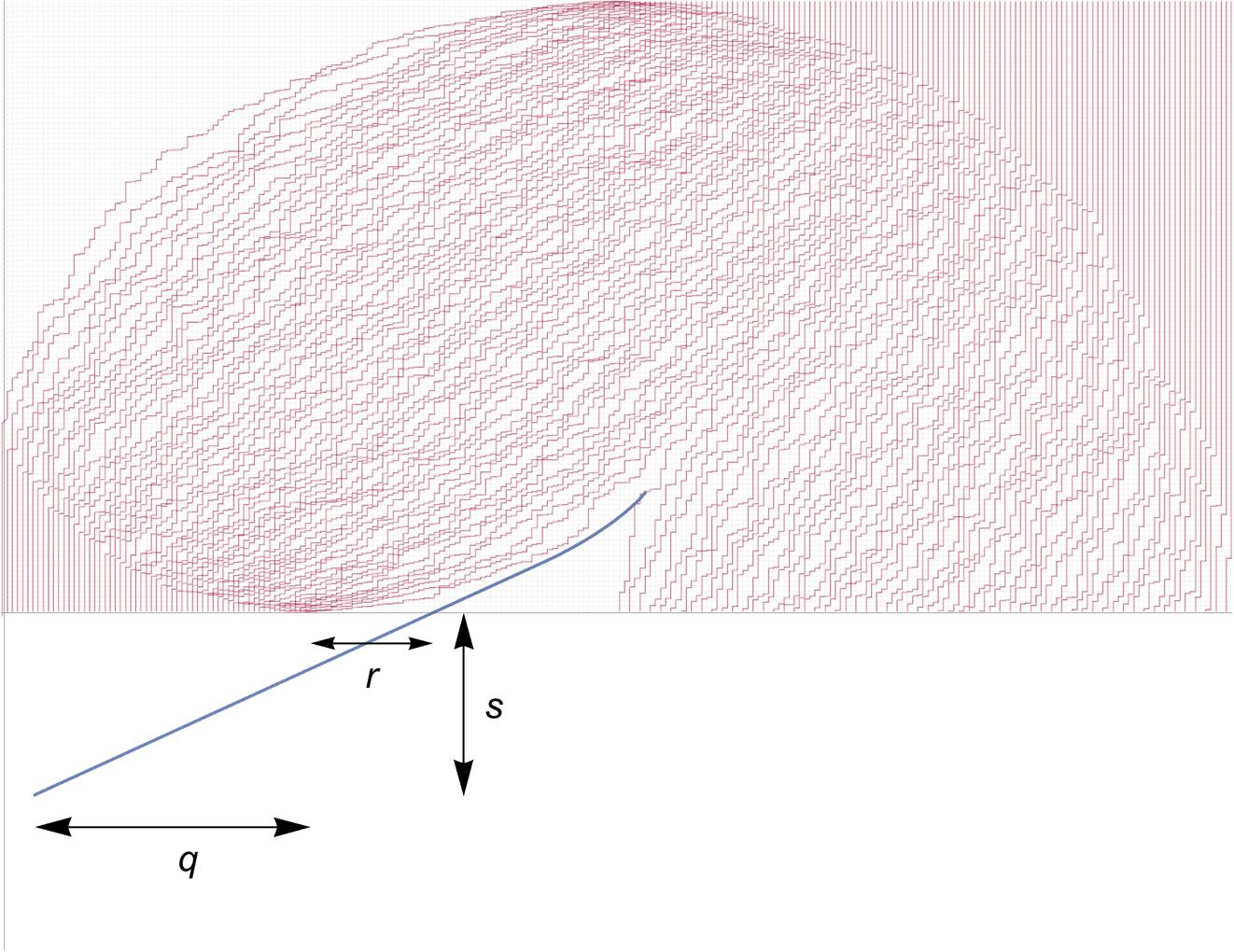}
\caption{A configuration of dual paths corresponding to a BLHT of shape $\lambda =(2n,2n-1,\ldots,n+1,n,\ldots,n)$. The blue curve represents the trajectory of the extended $k^{th}$ dual path, which starts at $(2n-\lambda_k'+q-1,-s-\frac{1}{(2n-\lambda_k'+q)^2})$ and passes through $(2n-\lambda_k'+r-1,0)$.} \label{fig:cuspLeftDiag}
\end{figure}

Let $\lambda$ be a partition such that $\lambda_k'- \lambda_{k+1}'\sim n\Delta$. First extend $\lambda$ to $(\lambda_1,\ldots,\lambda_n,0,\ldots,0)$ by adding $m$ parts of zero to the end of $\lambda$. 
Call $Z_r$ the partition function with the starting point of the $k^{th}$ dual path moved to the right by $r$. 
In terms of the original partition, moving the starting point of the $k^{th}$ dual path to the right by $r$ corresponds to changing $\lambda_i$ to $\lambda_i - 1$ for $i=b,b-1,\ldots,b-r+1$. Call the resulting partition $\mu$. 

From the product formula we have
\[
\begin{split}
\frac{Z_r}{Z} & = t^{|\mu|-|\lambda|} \prod_{1\le i < j \le n+m} \frac{\mu_i-\mu_j +j-i}{\lambda_i-\lambda_j+j-i}\\
& =t^{-r} \prod_{\substack{ 1\le i\le b-r \\ b-r+1 \le j \le b }} \frac{\lambda_i -(\lambda_j-1)+j-i}{\lambda_i -\lambda_j+j-i} \prod_{\substack{b-r+1 \le i\le b \\b+1 \le j\le n+m} } \frac{(\lambda_i-1) -\lambda_j+j-i}{\lambda_i -\lambda_j+j-i} \\
\end{split}
\] where the other other terms in the product are one. Both remaining terms in the product are telescoping and we have
\[
\frac{Z_r}{Z} = t^{-r} \prod_{1\le i \le b-r} \frac{\lambda_i - \lambda_b+b+1 -i }{\lambda_i - \lambda_b+b-r+1-i} \prod_{b+1\le j\le n+m} \frac{\lambda_b-\lambda_j+j-(b+1)}{\lambda_b-\lambda_j+j-(b-r+1)}.
\] We can rewrite the above as follows
\[
\begin{split}
\frac{Z_r}{Z} & = t^{-r} \prod_{\substack{1\le i \le n+m\\ i\ne b-r+1}} \frac{\lambda_i - \lambda_b+b+1 -i }{\lambda_i - \lambda_b+b-r+1-i} \prod_{b-r+2\le i \le b} \frac{\lambda_i - \lambda_b+b-r+1-i}{\lambda_i - \lambda_b+b+1 -i} \\
& = t^{-r}(-1)^{r-1} \prod_{\substack{1\le i \le n+m\\ i\ne b-r+1}} \frac{\lambda_i - \lambda_b+b+1 -i }{\lambda_i - \lambda_b+b-r+1-i}.
\end{split}
\]

Now extend the $k^{th}$ dual path to begin at $(2n-1-\lambda_k'+q,-s-\frac{1}{(2n-\lambda_k'+q)^2})$, with $q\in(-\infty,\Delta]$. Call the corresponding partition function $Z_{qs}$. Note that $\lambda_b=k$ and $\lambda_k'=b$. $Z_{qs}$ can be written
\begin{align}
\frac{Z_{qs}}{Z} & = \sum_{r=\max(q,0)}^{\Delta} \frac{Z_r}{Z} s^{r-q} \binom{m+n-1+k-\lambda_k'+r}{m+n-1+k-\lambda_k'+q} \nonumber \\
& =  \sum_{r=\max(q,0)}^{\Delta}  t^{-r} (-1)^{r-1} \prod_{\substack{1\le i \le n+m\\ i\ne b-r+1}} \frac{\lambda_i - \lambda_b+b+1 -i }{\lambda_i - \lambda_b+b-r+1-i}  s^{r-q} \binom{m+n-1+\lambda_b -b + r}{m+n-1+\lambda_b -b + q} \nonumber \\
& =  \sum_{r=\max(q,0)}^{\Delta}  t^{-r} s^{r-q} (-1)^{r-1} \prod_{\substack{1\le i \le n+m\\ i\ne b-r+1}} \frac{a_i -a_b+1 }{a_i -a_b-r+1}  \binom{m-1+a_b + r}{m-1+a_b + q}.
\end{align}

\begin{lem} \label{lem:fbV}
Consider the limit $n\to \infty$, for parameters scaling as $t=n\tau$, $s=n\sigma$, $r=n\rho$, $q=nz$, $m=nM$, $\lambda_k'-\lambda_{k+1}' = n \Delta$, $b=n\beta$, and $a_i = \lfloor n\alpha(\frac{i}{n}) \rfloor$. In this limit, the $k^{th}$ dual path passes through the point $(\alpha(\beta)+\rho,0)$, with $\rho$ related to $z$ by
\begin{equation}
z = (\alpha(\beta)+\rho) \frac{\sigma}{\tau} e^{-p.v.\int_0^{1}du \frac{1}{\alpha(\beta)+\rho-\alpha(u)}}  +\rho.
\end{equation}
\end{lem}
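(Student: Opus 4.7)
The plan is to mirror the strategy of Lemma \ref{lem:fbE}, carrying the sum for $Z_{qs}/Z$ to its saddle point via Laplace's method, but now the subtle feature is the factor $(-1)^{r-1}$ together with the omission of the index $i=b-r+1$ from the product, which together produce the principal-value integral in the answer. First I would apply Stirling's approximation to the binomial coefficient, obtaining
\[
\binom{m-1+a_b+r}{m-1+a_b+q}\sim \frac{1}{\sqrt{2\pi n}}\sqrt{\tfrac{M+\alpha(\beta)+z}{(M+\alpha(\beta)+\rho)(\rho-z)}}\,e^{n F_{\mathrm{bin}}(\rho)},
\]
where $F_{\mathrm{bin}}$ collects the usual $x\ln x$ contributions. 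In parallel, I would rewrite the product over $i\ne b-r+1$ as
\[
\prod_{i\ne b-r+1}\frac{a_i-a_b+1}{a_i-a_b-r+1}=\exp\!\left(\sum_{i\ne b-r+1}\ln\frac{a_i-a_b+1}{a_i-a_b-r+1}\right),
\]
and approximate the sum by a Riemann integral in the usual way.

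The hard step is passing from the discrete sum over $i\ne b-r+1$ to an integral over $u\in[0,1+M]$: the integrand $\ln\bigl(\alpha(\beta)+\rho-\alpha(u)\bigr)^{-1}$ has a logarithmic singularity at the value $u_*$ where $\alpha(u_*)=\alpha(\beta)+\rho$, and this $u_*$ is precisely the continuum image of the omitted index $b-r+1$. I would therefore split the sum into a piece bounded away from $u_*$, where the Riemann-sum approximation is standard, and a local piece in a window of width $\varepsilon$ around $u_*$. On the local piece, linearize $\alpha$ around $u_*$ and observe that the half-integer spacing of the discrete indices, combined with the factor $(-1)^{r-1}$, produces an alternating tail whose contribution cancels against the one-sided boundary terms from the two outer pieces. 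The surviving finite limit as $\varepsilon\to 0$ is precisely the Cauchy principal value
\[
\mathrm{p.v.}\int_0^{1+M}\! du\,\ln\frac{\alpha(\beta)+\rho-\alpha(u)}{\alpha(\beta)-\alpha(u)}.
\]
(A convenient way to package this rigorously is to note that the finite product $\prod_{i\ne b-r+1}(a_i-a_b-r+1)$ is, up to overall sign $(-1)^{r-1}$, a reflection of a symmetric product, so the sign absorbs the would-be imaginary part of the logarithm and leaves the principal value.)

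Once this is done, the exponent becomes
\[
S(\rho)=-\rho\ln\tau+(r{-}q)\tfrac{1}{n}\ln\sigma\Big|_{\text{scaled}}+F_{\mathrm{bin}}(\rho)+\mathrm{p.v.}\!\int_0^{1}du\,\ln\!\frac{\alpha(\beta)+\rho-\alpha(u)}{\alpha(\beta)-\alpha(u)}+(\text{terms from }u\in[1,1+M])
\]
(the contribution of the extension part $u\in[1,1+M]$ telescopes against $F_{\mathrm{bin}}$, exactly as in the proof of the last-dual-path lemma). I would then apply Laplace's method: the integral over $\rho$ is dominated by the unique interior critical point of $S$, and computing $S'(\rho)=0$ gives
\[
\ln\frac{\sigma(\alpha(\beta)+\rho)}{\tau(z-\rho)}-\mathrm{p.v.}\!\int_0^1\frac{du}{\alpha(\beta)+\rho-\alpha(u)}=0,
\]
which rearranges to the claimed formula
\[
z=(\alpha(\beta)+\rho)\,\tfrac{\sigma}{\tau}\,e^{-\mathrm{p.v.}\int_0^1 du/(\alpha(\beta)+\rho-\alpha(u))}+\rho,
\]
and the exponential suppression of the integrand away from the critical point shows that the $k$-th dual path passes, with probability tending to $1$, through $(\alpha(\beta)+\rho,0)$. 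The main obstacle is the careful $\varepsilon\to 0$ cancellation that produces the principal value; once that bookkeeping is established, the remaining Stirling/Laplace steps are routine copies of the arguments used in Lemmas~\ref{lem:obz} and~\ref{lem:fbE}.
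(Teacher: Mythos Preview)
Your approach is essentially the paper's: Stirling on the binomial, Riemann-sum approximation of the product, Laplace's method, and a cancellation of the $u\in[1,1+M]$ contribution against the binomial terms. The paper handles the $(-1)^{r-1}$ factor slightly differently, writing it as $e^{i\pi r}$ and carrying an explicit $i\rho\pi$ term in $S(\rho)$; upon differentiation this contributes a factor $e^{i\pi}=-1$ to the saddle-point equation, while the two cut integrals $\int_0^{\beta-\rho-\epsilon}+\int_{\beta-\rho+\epsilon}^{1}$ supply the principal value directly. Your packaging (absorb the sign into the product so the log is real, yielding the principal value at once) is equivalent.

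One point to clean up: the ``half-integer spacing / alternating tail'' mechanism you describe is not what is happening. There is no alternation; the denominators $a_i-a_b-r+1$ are negative precisely for the $r-1$ indices $i=b-r+2,\dots,b$ (where $a_i-a_b=b-i$ in the frozen block), and positive elsewhere, so the overall sign is exactly $(-1)^{r-1}$, cancelling the prefactor. Your parenthetical ``symmetric product'' remark is the correct explanation. Also watch the sign in your saddle equation: the setup has $r\ge q$, hence $\rho\ge z$, and the extra $-1$ coming from the sign absorption is what flips $\rho-z$ to $z-\rho$ in the final rearrangement (this is where the paper's $e^{i\pi}$ enters). With those two cosmetic fixes your argument matches the paper's.
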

\begin{proof}
Taking the limit $n\to \infty$
\begin{align*}
\frac{Z_{qs}}{Z}\sim\sqrt{\frac{n}{2\pi}} e^{-nz\,\ln(n)}\int_{\max(z,0)}^\Delta d\rho \sqrt{\frac{M+\alpha(\beta)+\rho}{(M+\alpha(\beta)+z)(\rho-z)}}  e^{n\, S(\rho)}
\end{align*}
where
{\small
\[
\begin{split}
S(\rho) = (\rho-z)\ln(\sigma) - \rho\,\ln(\tau) + (M+\alpha(\beta)+\rho)\ln(M+\alpha(\beta)+\rho) - (M+\alpha(\beta)+z)\ln(M+\alpha(\beta)+z) \\
- (\rho-z)\ln(\rho-z) + i \rho \pi +\int_0^{\beta-\rho-\epsilon} du\, \ln\left(\frac{\alpha(u) - \alpha(\beta)}{\alpha(u) - \alpha(\beta) - \rho}\right)  + \int_{\beta-\rho+\epsilon}^{1+M}du\, \ln\left(\frac{\alpha(u) - \alpha(\beta)}{\alpha(u) - \alpha(\beta) - \rho}\right)
\end{split}
\]
}
with $\epsilon = \frac{1}{n}$ tending to $0$.  

The sole critical point of $S$ occurs when
\[
-\frac{\sigma}{\tau} \frac{M+\alpha(\beta)+\rho}{\rho-z} e^{\int_0^{\beta - \rho -\epsilon} \frac{1}{\alpha(u)-\alpha(\beta)-\rho} du+\int_{\beta - \rho +\epsilon}^{1+M} \frac{1}{\alpha(u)-\alpha(\beta)-\rho} du} = 1.
\]
This can be simplified to 
\[
z = (\alpha(\beta)+\rho) \frac{\sigma}{\tau} e^{-p.v.\int_0^{1}du \frac{1}{\alpha(\beta)+\rho-\alpha(u)}}  +\rho
\]
where we use that 
\[
e^{\int_{1}^{1+M} \frac{1}{\alpha(u)-\alpha(\beta)-\rho} du} = e^{\int_{1}^{1+M} \frac{1}{1-u-\alpha(\beta)-\rho} du} = \frac{\alpha(\beta)+\rho}{M+\alpha(\beta)+\rho}.
\]
\end{proof}

\begin{thm}
Assuming the tangent method holds, the portion of the arctic curve bounding such a frozen region is parameterized by
\begin{align}
\begin{split}
X(x) = \frac{x^2I'(x)}{I(x) + x I'(x)} \\
Y(x) = \frac{\tau}{I(x) + x I'(x)}
\end{split}
\end{align}
where $I(x) = e^{-p.v.\int_0^{1}du \frac{1}{x-\alpha(u)}}$ and $x\in [\alpha(\beta),\alpha(\beta)+\Delta]$. Recall $\lambda_k'=b\sim n\beta$ and $\lambda_k'-\lambda_{k+1}'\sim n\Delta$. 
\end{thm}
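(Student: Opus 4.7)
The plan is to mirror the approach used in the preceding four theorems, adapted to the principal-value integral $I(x) = e^{-\text{p.v.}\int_0^1 du/(x-\alpha(u))}$ that appears here because the extended $k^{th}$ dual path must pass through the bulk of other paths rather than around them. First I would invoke Lemma \ref{lem:fbV}, which tells us that in the large-$n$ limit the extended dual path passes through the two points $(\alpha(\beta)+z, -\sigma)$ and $(\alpha(\beta)+\rho, 0)$, where $z$ and $\rho$ are linked by the critical-point equation
\[
z = (\alpha(\beta)+\rho)\,\frac{\sigma}{\tau}\,e^{-\,\text{p.v.}\int_0^1 du/(\alpha(\beta)+\rho-\alpha(u))} + \rho.
\]

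The next step is to write down the straight line through those two points, namely
\[
Y = \frac{\sigma}{z+\rho - \sigma\cdot 0}(X - \alpha(\beta) - \rho)\quad\text{reorganized as}\quad Y = \frac{-\sigma}{z-\rho}(X-\alpha(\beta)-\rho),
\]
and then substitute for $z-\rho$ using the lemma. Setting $x = \alpha(\beta)+\rho$ and recognizing $I(x)$, the line collapses to the clean implicit form
\[
\frac{x}{\tau} I(x)\, Y + X - x = 0.
\]
This identifies the one-parameter family of lines whose envelope, by the tangency assumption, traces out the relevant arc of the arctic curve.

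To extract the parameterization I would differentiate this implicit equation with respect to $x$, producing the second equation
\[
\frac{1}{\tau}(I(x) + x I'(x))\,Y - 1 = 0,
\]
and solve the resulting $2\times 2$ linear system in $(X, Y)$. This yields precisely
\[
X(x) = \frac{x^2 I'(x)}{I(x) + x I'(x)}, \qquad Y(x) = \frac{\tau}{I(x) + x I'(x)}.
\]
Finally, the range $\rho \in [0, \Delta]$ translates directly into $x \in [\alpha(\beta), \alpha(\beta)+\Delta]$.

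The main obstacle, in my view, is not the calculus of the envelope (which is mechanical and identical to the previous cases) but rather ensuring that the principal-value prescription in $I$ is correctly propagated from Lemma \ref{lem:fbV} into the envelope equation. In the preceding theorems the integrand $1/(x-\alpha(u))$ was singularity-free on $[0,1]$ because $x$ lay outside the range of $\alpha$; here, since $x = \alpha(\beta)+\rho \in [\alpha(\beta), \alpha(\beta)+\Delta]$ and $\beta \in (0,1)$, the integrand has a pole inside the domain of integration at $u = \alpha^{-1}(x)$. One should verify that the derivative $I'(x)$ appearing in the envelope equation is the derivative of the principal-value exponential and that the contributions from the two split intervals in Lemma \ref{lem:fbV} combine, via the symmetric $\epsilon \to 0$ limit, into a genuine principal value; this is what justifies writing $I(x) = e^{-\text{p.v.}\int_0^1 du/(x-\alpha(u))}$ uniformly in the final statement.
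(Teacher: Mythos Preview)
Your proposal is correct and follows essentially the same argument as the paper: invoke Lemma~\ref{lem:fbV} to obtain the two points $(\alpha(\beta)+z,-\sigma)$ and $(\alpha(\beta)+\rho,0)$, write the line through them, substitute the critical-point relation to get $\frac{x}{\tau}I(x)Y + X - x = 0$ with $x=\alpha(\beta)+\rho$, differentiate in $x$, and solve the resulting linear system. Your displayed intermediate expression $Y = \frac{\sigma}{z+\rho - \sigma\cdot 0}(X-\alpha(\beta)-\rho)$ is garbled (the slope between the two points is $\sigma/(\rho-z)$, not $\sigma/(z+\rho)$), but your ``reorganized'' form $Y = \frac{-\sigma}{z-\rho}(X-\alpha(\beta)-\rho)$ is correct and the rest of the computation proceeds exactly as in the paper; your added remarks on the principal-value prescription are sound commentary but not part of the paper's proof.
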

\begin{proof}
From the points $(\alpha(\beta)+z,-\sigma)$, $(\alpha(\beta)+\rho,0)$, and Lemma \ref{lem:fbV} we have a family of lines
\[
\frac{x}{\tau}I(x)Y +X-x=0
\] with $x=\alpha(\beta)+\rho$. Taking the derivative with respect to $x$, we get the system of equations
\begin{align*}
& \frac{x}{\tau}I(x)Y+X-x=0 \\
& \frac{1}{\tau} (I(x)+xI'(x))Y -1 = 0
\end{align*}
which can be solved to yield the desired parameterization. Note that since $\rho \in [0,\Delta]$, the range of $x=\alpha(\beta)+\rho$ is $[\alpha(\beta),\alpha(\beta)+\Delta]$.
\end{proof}

\subsection{Examples}
\subsubsection{$\lambda = (2n,\ldots,2n,n,\ldots,1)$}
As an example of a BLHT whose arctic curve contains a freezing boundary, consider the partition $\lambda=(2n,\ldots,2n,n,\ldots,1)$. The limiting profile is 
\[
\alpha(u) = \begin{cases} 4-2u & 1< u\le 2 \\ 4-u & 0\le u\le 1 \end{cases}.
\]
From this we have
\[
I(x) =\frac{x-4}{x-3} \sqrt{\frac{x-2}{x}}
\]
and
\[
I'(x) = \frac{2x^2-9x+12}{(x-3)^2 x^2} \sqrt{\frac{x}{x-2}}.
\]
Plugging this into equation (\ref{thm:main}), we get
\begin{align*}
X(x) = \frac{x (2x^2-9x+12)}{x^3-7x^2+17x-12} \\
Y(x) =\frac{\tau\; x (x-3)^2}{x^3-7x^2+17x-12} \sqrt{\frac{x-2}{x}}
\end{align*}
with $x\in (-\infty,0] \cup [2,\infty)$. The portion of the arctic curve corresponding to the freezing boundary is $x\in [2,3]$. See Figure \ref{fig:cuspRight}.

\begin{figure}[!htb] 
\includegraphics[width=\linewidth]{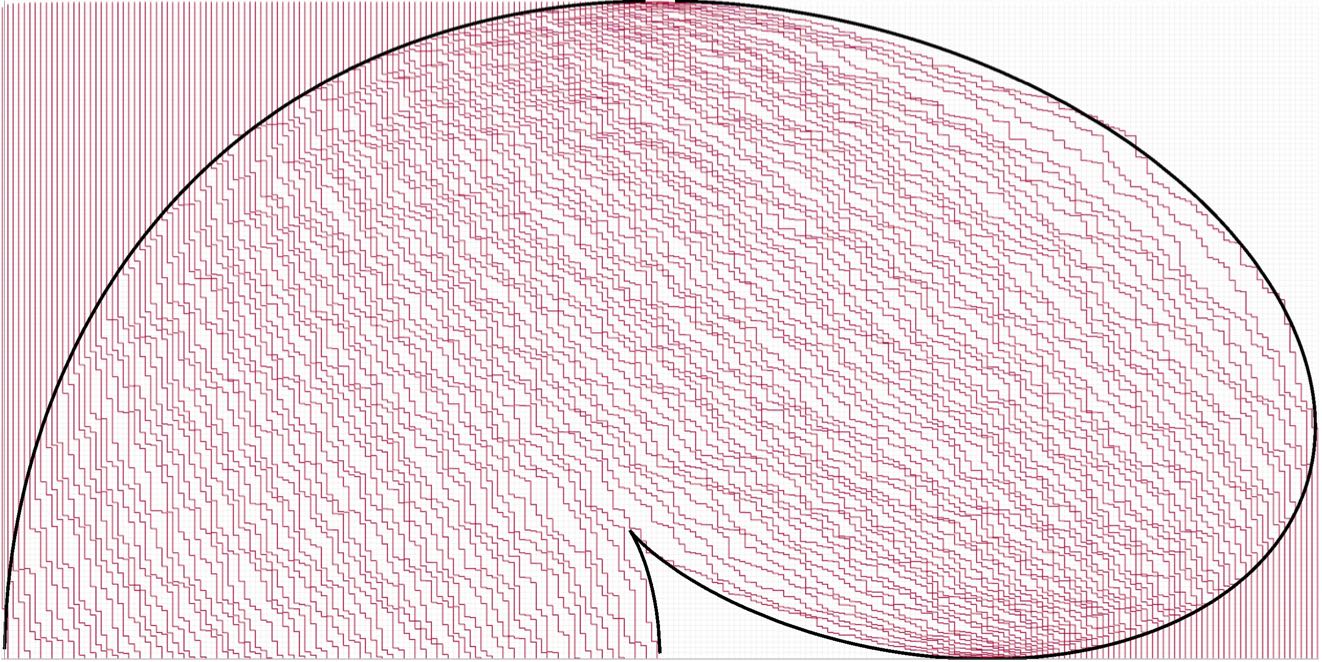}
\caption{A uniformly sampled configuration of paths corresponding to a BLHT of shape $\lambda=(2n,\ldots,2n,n,\ldots,1)$, with $n=60$ and $t=120$. In black is the computed arctic curve.}
\label{fig:cuspRight}
\end{figure}

\subsubsection{$\lambda = (2n,2n-1,\ldots,n+1,n,\ldots,n)$}
In the case $\lambda =  (2n,2n-1,\ldots,n+1,n,\ldots,n)$ the limiting profile is 
\[
\alpha(u) = \begin{cases} 3-u & 1< u\le 2 \\ 4-2u & 0\le u\le 1 \end{cases}.
\]
We have
\[
I(x) = \frac{1}{x-1} \sqrt{(x-4)(x-2)}
\] and
\[
I'(x) = \frac{2x-5}{(x-1)^2} \sqrt{\frac{1}{(x-2)(x-4)}}
\]
The gives the parameterization
\begin{align*}
X(x) =  \frac{x^2(2x-5)}{x^3-5x^2+9x-8}\\
Y(x) = \tau \frac{(x-1)^2}{x^3-5x^2+9x-8} \sqrt{(x-4)(x-2)}
\end{align*}
for $x\in(-\infty,2] \cup [4,\infty)$. The portion of the arctic curve corresponding to the freezing boundary is $x\in [1,2]$.  See Figure \ref{fig:cuspLeft}.

\begin{figure}[!htb] 
\includegraphics[width=\linewidth]{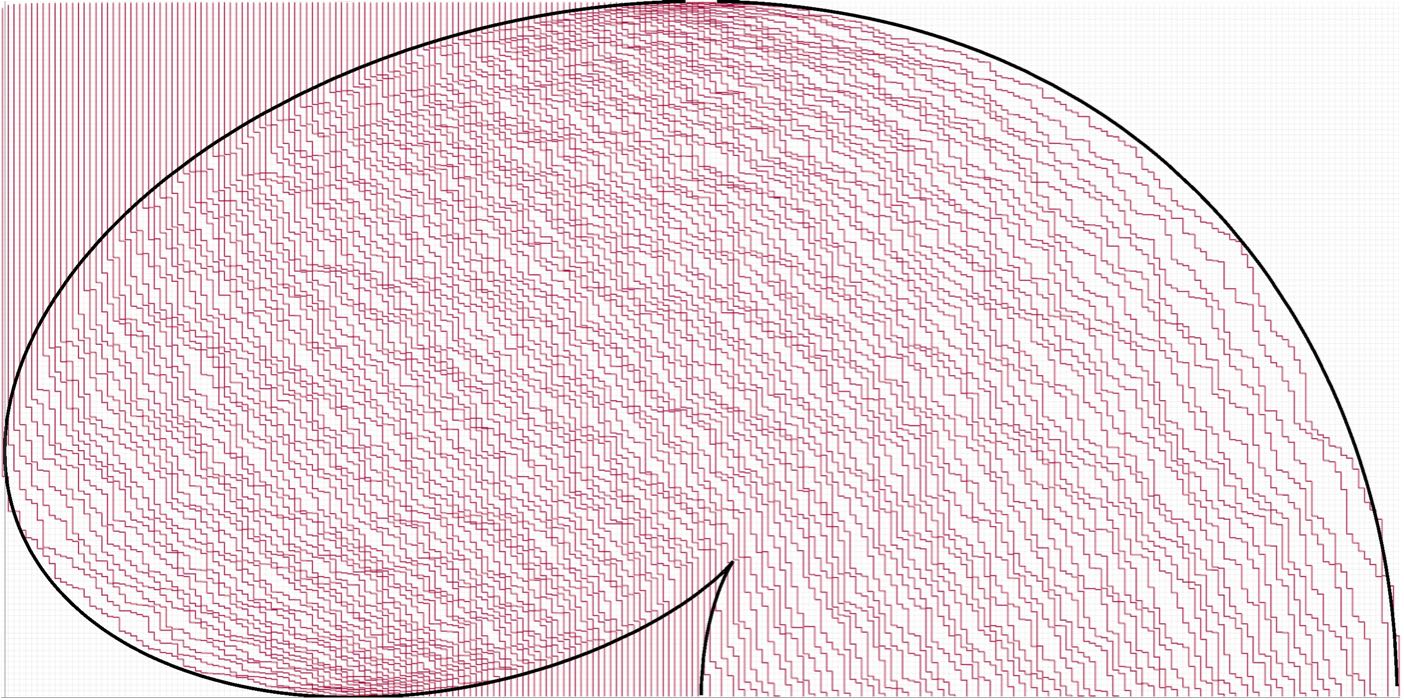}
\caption{A uniformly sampled configuration of paths corresponding to a BLHT of shape $\lambda=(2n,2n-1,\ldots,n+1,n,\ldots,n)$, with $n=60$ and $t=120$. In black is the computed arctic curve.}
 \label{fig:cuspLeft}
\end{figure}

More complex examples can be dealt in a similar way. An example with three internal jumps, for which we do not illustrate the calculations here, is shown in Figure \ref{fig:manycusps}.

\begin{figure}[!htb] 
\includegraphics[width=\linewidth]{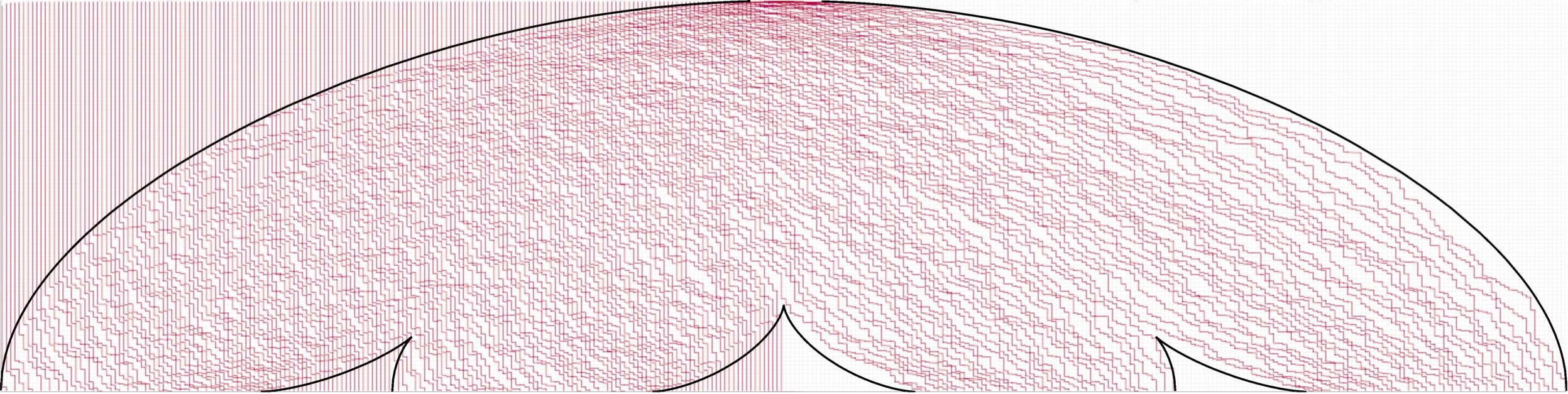}
\caption{A uniformly sampled configuration of paths corresponding to a BLHT of shape $\lambda=(6n,\ldots,5n+1,4n,\ldots,3n+1,2n,\ldots,2n,2n,\ldots,n+1,n,\ldots,n,n,\ldots,1)$, with $n=30$ and $t=60$. In black is the computed arctic curve.}
 \label{fig:manycusps}
\end{figure}

\section{Dimer models and arctic curves}
\label{dimer}

In this section we present a heuristic which gives another way to compute the arctic curves that 
we computed in Section \ref{tangent}.

We use the lecture hall lattice defined in Section \ref{combi}.
Given $\lambda$, recall that the lecture hall lattice ${\mathcal H}_t(\lambda)$ is the graph such that the vertices
are 
white vertices $(i,k+r/(i+1))$ and black vertices $(i,k+r/(i+1))$ for  $0\le i\le \lambda_1+n-1$, 
$0\le r\le i$ and $0\le k<t$. These have an edge in between them. 
Moreover
\begin{itemize}
\item we have a white vertex $(n-i,t)$ and an edge from this vertex to the black vertex
$(n-i,t-1/(n-i+1))$; and
\item we have a black vertex $(n-i+\lambda_i,-1/(n-i+\lambda_i+1))$ and an edge from the white vertex
$(n-i+\lambda_i,0)$ to  this vertex.
\end{itemize}
Finally let us list the other edges.
\begin{itemize}
\item For $0\le i\le \lambda_1+n-1$ and $0\le r\le i$ and $0\le k<t$, there is an edge from the black vertex  
$(i+1,k+r/(i+2))$  to the  white vertex  $(i,k+r/(i+1))$.
\item For $1\le i\ge \lambda_1+n-1$ and $0\le r\le i$ and $0\le k<t$, there is  an edge from the white vertex  $(i,k+(r+1)/(i+1))$ to the black vertex  $(i,k+r/(i+1))$.
\end{itemize}

\begin{figure}
  \centering
\begin{tikzpicture}
\DLL{3}3
\end{tikzpicture}
\caption{The lecture hall lattice  ${\mathcal H}_3$}
\label{t33again}
\end{figure}
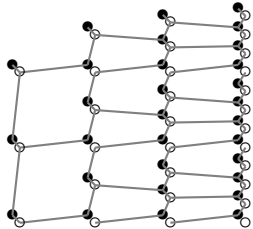

We can give the lecture hall lattice what is known as a \emph{Kasteleyn weighting}, which means an assignment of a sign $s(w, b)$ to each edge $(w,b)$ such that for a cycle $v_1, \dots, v_m, v_{m+1}=v_1$ around any face of the graph, the product of signs $\prod_{i=1}^m s(v_i, v_{i+1})$ is even if $m = 2$ mod $4$ and is odd if $m = 0$ mod $4$ \cite{Ka,K}. 
Thus we construct the Kasteleyn matrix $K$, with rows indexed by black vertices and columns indexed by white vertices, such that
$$K(b, w) =
\begin{cases}
      0 & \text{if $(w,b)$ is not an edge} \\
      -1 & \text{if $b$ and $w$ are in the same position} \\
      1 & \text{otherwise.}   \; \;
\end{cases} 
$$
This choice of signs is indeed a Kasteleyn weighting since the faces of ${\mathcal H}_t(\lambda)$ have either $6$ boundary edges with two negative signs, or $8$ boundary edges with $3$ negatives signs.

This then makes available to us all of the tools of the theory of the Kasteleyn operator; in particular, we have
\begin{theorem}\cite{Ka,K}
  	 	The number of dimer configurations $Z$ is equal  to $$Z = | \det K |.$$
\end{theorem}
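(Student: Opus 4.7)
The statement is the classical Kasteleyn theorem specialized to the lecture hall lattice, so the plan is to follow the standard proof adapted to this bipartite planar setting. First I would expand $\det K$ by the Leibniz formula as
\[
\det K = \sum_{\sigma} \operatorname{sgn}(\sigma) \prod_{w} K(\sigma(w), w),
\]
where $\sigma$ ranges over bijections from the white vertices to the black vertices of ${\mathcal H}_t(\lambda)$. By the definition of $K$, a term is nonzero precisely when $\sigma(w)$ is adjacent to $w$ for every white vertex $w$, i.e.\ when $\{(w,\sigma(w))\}$ is a perfect matching; each such term then has magnitude one. It therefore suffices to show that all nonzero terms share a common sign, so that $|\det K|$ counts perfect matchings, which by the bijection of Section~\ref{combi} is exactly $Z$.

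Next I would fix a reference matching $M_0$ (one exists by the path--dimer bijection) and compare it with an arbitrary other matching $M_1$. The symmetric difference $M_0 \mathbin{\triangle} M_1$ is a disjoint union of simple cycles $C_1, \ldots, C_r$, each of even length $2k_j$ and alternating between edges of $M_0$ and $M_1$. Passing from the Leibniz term for $M_0$ to the one for $M_1$ changes the permutation sign by $\prod_j (-1)^{k_j-1}$ (each cycle contributes a $k_j$-cycle on the white vertices) and changes the edge-sign product by $\prod_j \prod_{e\in C_j} s(e)$. The whole problem reduces to showing that for each cycle $C_j$ the signed edge product equals $(-1)^{k_j+1}$, since then the two changes cancel and the terms for $M_0$ and $M_1$ agree.

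For this I would invoke planarity: $C_j$ bounds a region $R_j$, and I would multiply together the Kasteleyn face relations --- product $+1$ around each hexagonal face and $-1$ around each octagonal face, as recorded in the paragraph preceding the theorem --- for every face enclosed by $R_j$. Interior edges contribute $s(e)^2 = 1$ and cancel, leaving the signed product around $C_j$ weighted by a factor of $(-1)^{V_j}$ coming from the interior vertices, where $V_j$ counts black--white interior pairs. Combining this with the counts of hexagonal versus octagonal interior faces and Euler's relation $F_j - E_j^{\mathrm{int}} + V_j^{\mathrm{tot}} = 1$ for the planar region $R_j$ yields a signed product of $(-1)^{k_j+V_j+1}$ around $C_j$. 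Since both $M_0$ and $M_1$ restrict to perfect matchings of the vertices strictly inside $R_j$, the integer $V_j$ is even, so this reduces to the desired $(-1)^{k_j+1}$.

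The main obstacle is this final parity bookkeeping, because our graph mixes hexagonal and octagonal faces and is not $\mathbb{Z}^2$-invariant: one must carefully track how the two face types contribute to the product when $R_j$ contains a mixture of both, and check that the identity $V_j \equiv 0 \pmod 2$ is preserved despite the irregular local geometry of ${\mathcal H}_t(\lambda)$ (in particular, near the boundary edges added to the decorated lattice). Once this is verified, the Leibniz expansion collapses to $\pm Z$, and $Z = |\det K|$ follows.
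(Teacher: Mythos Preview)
The paper does not prove this statement at all: it is quoted verbatim as a classical result with citations to Kasteleyn and Kenyon, and the only local work the authors do is to verify, in the sentence preceding the theorem, that their choice of signs is a Kasteleyn weighting (two minus signs on each hexagon, three on each octagon). So there is no ``paper's own proof'' to compare against; your proposal supplies strictly more than the paper does.

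Your sketch is the standard argument and is essentially correct. One comment on the ``main obstacle'' you flag: it is not actually an obstacle. The point of the Kasteleyn condition is precisely that once it is checked \emph{face by face}---which the paper has already done for both the hexagonal and octagonal faces---the cycle computation goes through uniformly for any bipartite planar graph, with no dependence on how the face types are mixed or on $\mathbb{Z}^2$-invariance. Concretely, multiplying the face relations over the interior of $C_j$ gives $\prod_{e\in C_j} s(e) = (-1)^{\sum_f (m_f/2+1)}$, and since $\sum_f m_f = 2E^{\mathrm{int}}_j + 2k_j$, Euler's formula for the disk yields $\prod_{e\in C_j} s(e) = (-1)^{k_j + V^{\mathrm{int}}_j + 1}$ regardless of which faces are hexagons and which are octagons. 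The parity $V^{\mathrm{int}}_j \equiv 0 \pmod 2$ then follows, as you say, because the interior vertices are matched among themselves by either of $M_0$, $M_1$ (no matching edge can cross the alternating cycle). The decorated boundary vertices cause no trouble either: they are degree-one vertices whose unique incident edge is forced in every matching, so they never lie on a cycle of $M_0 \mathbin{\triangle} M_1$. In short, the bookkeeping you worry about is already absorbed into the abstract statement ``$K$ carries a Kasteleyn weighting,'' and there is nothing special to check for the lecture hall lattice beyond what the paper already verified.
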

If we know the inverse Kasteleyn  matrix, we can also compute the correlation functions.
  \begin{theorem}\cite{K}
 Given a set of edges $X = ((w_1, b_1), (w_2, b_2), \dots, (w_k, b_k))$, the probability that all of the edges in $X$ occur in a dimer configuration is
  	 	$$(\prod_{i = 1}^k K(b_i, w_i)) \det (K^{-1} (w_i, b_j) )_{1 \leq i, j \leq k} \;\;.$$
\end{theorem}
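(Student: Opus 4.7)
The plan is to derive the formula by a two-step argument: first reduce the probability of seeing the edges in $X$ to a ratio of partition functions, and then apply Jacobi's complementary-minor identity relating minors of $K$ to minors of $K^{-1}$. To begin, I would write
$$
\mathbb{P}(X) = \frac{Z(X)}{Z},
$$
where $Z(X)$ is the signed partition function of dimer covers that contain every edge in $X$. Forcing these $k$ edges to appear removes their endpoints $V(X) = \{w_1, b_1, \dots, w_k, b_k\}$ from the graph, so that configurations contributing to $Z(X)$ are in weight-preserving bijection (up to a common prefactor) with dimer covers of $G \setminus V(X)$. Since each fixed edge contributes the scalar $K(b_i, w_i) = \pm 1$ to every such configuration's signed weight, one obtains $Z(X) = \bigl(\prod_{i=1}^k K(b_i, w_i)\bigr) \cdot Z_X$, where $Z_X$ is the signed partition function on $G \setminus V(X)$. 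By Kasteleyn's theorem applied to this subgraph, $Z_X = \pm \det K_X$, where $K_X$ is the submatrix of $K$ obtained by deleting the rows indexed by $b_1, \dots, b_k$ and the columns indexed by $w_1, \dots, w_k$.

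The second step is Jacobi's identity for complementary minors: for an invertible matrix $M$ and index tuples $I, J$ of equal length,
$$
\det M[I^c, J^c] \;=\; \varepsilon(I,J)\; \det M \cdot \det (M^{-1})[J,I],
$$
where $\varepsilon(I,J) = \pm 1$ is an explicit sign depending on the positions of $I$ and $J$. Applying this with $M = K$, $I = (b_1, \dots, b_k)$, $J = (w_1, \dots, w_k)$ gives
$$
\det K_X \;=\; \varepsilon \cdot \det K \cdot \det\bigl(K^{-1}(w_i, b_j)\bigr)_{1 \le i,j \le k}.
$$
Dividing $Z(X)$ by $Z = |\det K|$ and inserting this expansion produces the formula as stated, modulo an overall sign.

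The main technical obstacle, and the only place where real care is needed, is the reconciliation of three separate sign contributions: (i) the $\pm 1$ Kasteleyn weights of the fixed edges, contributing $\prod_{i=1}^k K(b_i, w_i)$; (ii) the sign $\varepsilon$ in Jacobi's identity; and (iii) the global sign in the Kasteleyn--Pfaffian identification $Z_X = \pm \det K_X$, which depends on the induced Kasteleyn orientation on the subgraph and on the ordering of remaining vertices. The content of the theorem is precisely that these three sign contributions conspire so that the final answer carries just the prefactor $\prod_{i=1}^k K(b_i, w_i)$, with no extra sign. A careful bookkeeping of the permutation signs arising from the Pfaffian/determinant expansion (standard in the dimer literature) discharges this step, and everything else in the proof is just linear algebra combined with Kasteleyn's theorem applied to the full graph and to the subgraph $G \setminus V(X)$.
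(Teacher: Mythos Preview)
The paper does not prove this theorem at all: it is quoted verbatim from Kenyon's lecture notes \cite{K} and stated without proof, as a standard tool from the dimer literature. So there is no ``paper's own proof'' to compare against.

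That said, your sketch is the standard argument one finds in the cited reference and elsewhere: reduce to a ratio of Kasteleyn determinants by deleting the forced vertices, then invoke Jacobi's complementary-minor identity to rewrite $\det K_X / \det K$ as a $k\times k$ minor of $K^{-1}$. Your identification of the sign bookkeeping as the only delicate point is accurate; once the signs are tracked (as in Kenyon's notes), the argument is complete. One small remark: you speak of a ``signed partition function'' $Z(X)$, but in the present setup all dimer configurations carry weight $1$ and the Kasteleyn signs enter only through the determinant expansion, so it is cleaner to phrase the first step as a ratio of unsigned counts and let the signs appear when you identify $Z_X$ with $\pm\det K_X$.
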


In general, if we have a finite bipartite graph with white vertices $W$ and black vertices $B$, then we can consider the Kasteleyn operator $K$ as a linear map from the vector space of functions on $W$ to the space of functions on $B$. For a function $f : W \rightarrow \mathbb{C}$, we may define $K f : B \rightarrow \mathbb{C}$ by

 \begin{equation}\label{eqn:findif}
    (K f)(b) = \sum_{w} K(b, w) f(w) = \sum_{ \text{edges } (w,b)} K(b, w) f(w)
  \end{equation}
which means that $K$ is a local operator.

In what follows we assume both $t,\lambda_1$ grow linearly in $n$ and we rescale the entire graph by $1/n$ to embed it into a finite rectangular region $R \subset \R^2$. Each vertex $(X, Y)$ in the graph $H_t(\lambda)$ has a pair of continuum coordinates $(x, y)  = (X/n, Y/n)$. We will identify vertices with their pair of rescaled coordinates $(x, y)$.

Consider a black vertex $(x, y)$ with corresponding lattice coordinates $(X, Y)$ which are not of the form $(i, k - 1/(i+1))$ for $k$ an integer. In this case if we have a smooth function $f$ on the plane, then

  \begin{equation}\label{eqn:kast} 
  (K f)(x, y)  = f(x-\epsilon, y ) + f(x, y+ \epsilon^2/(x+\epsilon)) - f(x, y)  \;\;.
   \end{equation}

See Figure \ref{fig:j}. (In fact Figure \ref{fig:j} displays a neighborhood of $(x, y)$ with a slightly different embedding of the graph from what we have defined above. The computations that follow are unaffected by this up to first order, so we omit details.)

We guess the form of the inverse of the Kasteleyn matrix using an ansatz developed by Keating, Reshetikhin and Sridhar \cite{KRS}. They developed such an ansatz for the dimer model on the
hexagonal lattice. Based on numerical evidence, we adapt their ansatz to our setting.

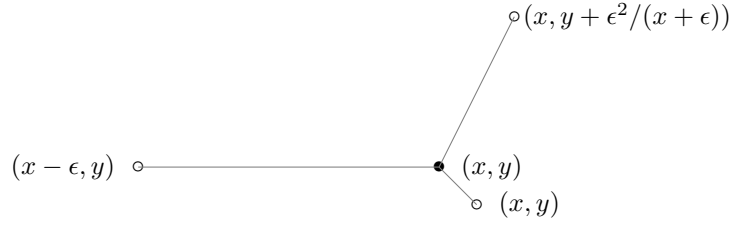
\begin{figure}
\centering
\begin{tikzpicture}  
\node at (0,0){$\bullet$};
\node at (.5,.-.5){$\circ$};
\node at (1,2){$\circ$};
\node at (-4,0){$\circ$};
\draw[gray](0,0)--(.5,-.5);
\draw[gray](0,0)--(1,2);
\draw[gray](0,0)--(-4,0);
\node at (0.7,0){$(x,y)$};
\node at (-4-1,0){$(x-\epsilon,y)$};
\node at (0.7+.5,-0.5){$(x,y)$};
\node at (2.5,2){$(x,y+\epsilon^2/(x+\epsilon))$};
\end{tikzpicture}
\caption{Neighborhood of the black vertex $(x,y)$}
\label{fig:j}
\end{figure}

    The ansatz is that for small $\epsilon$, as a function on pairs of vertices $K^{-1}$ will equal the restriction of the function on the continuous domain $R \times R$ defined by
    $$K^{-1}((x, y), (x', y')) = Re\left( \exp{\left( -(x - x') \frac{\log( \epsilon )}{\epsilon} + \frac{F((x, y), (x', y'))}{\epsilon} \right)} \phi((x, y), (x', y')) \right)$$
    with $F : R \times R \rightarrow \C$ piecewise smooth, and
    $$\phi((x, y), (x', y')) = \sum_{k=0}^\infty \phi^{(k)}((x, y), (x', y')) \epsilon^k$$
    for
     $$  \phi^{(k)} :R \times R \rightarrow \mathbb{C}$$
    piecewise smooth functions.
    
     In what follows we simply assume $K^{-1}$ has this form, and that the above power series in $\epsilon$ converges for $\epsilon$ in some neighborhood of $0$. We expand $K K^{-1}((x, y), (x', y'))$ for $(x, y) \neq (x', y')$ as a power series in $\epsilon$, and deduce a differential equation for $F$ from the fact that each coefficient of the series must exactly equal $0$.

     If $K^{-1}$ has the above form, then we claim that for all $(x', y')$, the function $(x, y) \mapsto F((x, y), (x', y'))$ satisfies the following differential equation for all $(x, y) \neq (x', y')$:
     \begin{equation} \label{eqn:Feq}
     e^{-F_x} + F_y/x = 0 
     \end{equation}
     where $F_x \defeq \frac{\partial F}{\partial x}((x, y), (x', y'))$ and similarly for $F_y$.

     	We compute: By \eqref{eqn:kast} we get
\begin{align*}
0 &= (K K^{-1}) ( (x, y), (x', y') )  \\
&=\sum_{(x'', y'') \sim (x, y)} K((x, y), (x'', y'')) K^{-1}((x'', y''), (x', y')) \\
&= K^{-1}((x - \epsilon, y), (x', y')) + K^{-1}((x, y + \epsilon^2 /(x+\epsilon)), (x', y')) - K^{-1}((x, y), (x', y')) \\
&=Re [  \exp{ \left( -(x - \epsilon - x') \log( \epsilon) / \epsilon + F(x - \epsilon, y , x', y')/\epsilon \right)} \phi((x-\epsilon,y);(x',y'))]\\ 
& \-\ + Re[\exp{ \left( -(x - x') \log(\epsilon)/ \epsilon + F(x, y + \epsilon^2/(x+\epsilon), x', y')/\epsilon \right)}(\phi(x,y+\epsilon^2/(x+\epsilon)),(x',y')) ]\\ 
& \-\ -  Re[\exp{\left( -(x - x') \log(\epsilon)/ \epsilon + F(x, y, x', y')/ \epsilon \right)} \phi((x,y),(x',y'))  ] \;\;.
\end{align*}

We assume that this finite difference equation is satisfied not just for the real part, but for complex part of the ansatz as well (recall we allow $F$ to have an imaginary part). We also assume $\phi^{(0)}((x, y), (x', y')) \neq 0$. So Taylor expanding around $((x, y), (x', y'))$ and simplifying (being careful to observe that any term with $\phi^{(k)}$ for $k \geq 1$ either cancels or contributes at higher order) we obtain 
\begin{align*}
0 
&= \exp{ \left( \log( \epsilon) - F_x + O(\epsilon) \right)} \\
 & \-\ + \exp{\left(  \epsilon F_y/x + O(\epsilon^2) \right)} 
 \\
 & \-\  - 1 \\
&= \epsilon \exp{ \left( - F_x \right)}\left( 1 + O(\epsilon)\right) \\
 & \-\ + \epsilon F_y/x  + O(\epsilon^2 ) \\
&= \epsilon \exp{ \left( - F_x \right)} + \epsilon F_y/x + O(\epsilon^2) \;\;.
\end{align*}
Then at order $\epsilon$ we get \eqref{eqn:Feq}, as desired.

\noindent{\bf Remark.} 
In the simplest case for which we could compute the inverse Kasteleyn's asymptotics analytically, the Ansatz is correct and $F$ indeed satisfies this differential equation. 
Consider the case where $t=n$ and $\lambda=(\kappa n,0^{n-1})$ for some constant $\kappa$.
The inverse Kasteleyn matrix is then
\begin{align*}
K_L^{-1}((X, Y), (X', Y')) &= \frac{P((n-1, n),(X, Y)) \times P((X', Y'),(n(\kappa+1)-1, 0))}{P((n-1, n),(n(\kappa+1)-1, 0))} \\ 
& \;\;\; -  P((X', Y'),(X, Y)) \; \; .
\end{align*}
where $P((a,b),(c,d))$ is the number of south-east paths from $(a,b)$ to $(c,d)$ on the lecture hall graph ${\mathcal G}_t$, which in particular is defined as $0$ if $c < a$ or $d > b$. Indeed, one can easily compute that $K K^{-1} ((X, Y),(X', Y')) = \delta_{((X, Y),(X', Y'))}$, using the following two facts:
First, we have for any pair of vertices $(X, Y), (X',Y')$
\begin{align*}
P((X', Y'),(X, Y)) - \sum_{ \substack{(\tilde{X},\tilde{Y}) \text{ adjacent } (X, Y) : \\
 \; \tilde{X} < X \text{ or } \tilde{Y} > Y } } P((X', Y'),(\tilde{X},\tilde{Y})) 
 &= \delta_{((X, Y),(X', Y'))}
 \end{align*}
and second, at the boundary vertex $(X_0, Y_0) = (n (\kappa + 1)-1, 0)$, we have
$$K_L^{-1}((X_0, Y_0), (X', Y')) = 0 \;\;.$$
As computed before in Section \ref{combi}, if $a,b,c,d\in \mathbb Z$ then
$$
P((a,b),(c,d))= (b-d)^{c-a}{c \choose c-a}.
$$
Using Stirling's approximation, we can compute the asymptotic behavior of $K_L^{-1}((X, Y), (X', Y'))$.
We leave the details of the computation to the interested reader. \\

Now we derive a Burgers equation from \eqref{eqn:Feq} via a variable change. Suppose $F(x, y)$ satisfies \eqref{eqn:Feq}, and let $z = e^{F_y}, w = e^{-F_x}$. Then
   \begin{equation}   \label{eqn:mixed}
z_x/z + w_y/w = 0 
\end{equation}
by equality of mixed partials of $F$ and by equation \eqref{eqn:Feq}, we get
\begin{equation} \label{eqn:P}
\exp(- w x) = z \;\;.
\end{equation}

   Setting $u = - w x$, \eqref{eqn:mixed} becomes
  
  \begin{equation}\label{eq:34}
u u_x + u_y = 0 \;\;.
\end{equation}

This is a remarkable and useful fact because it means that the limiting behavior of lecture hall tableaux can be described by the complex inviscid Burgers equation \cite{KO07}.

To solve the Burgers equation for our two running examples, we first state a lemma which says we can solve this via the method of complex characteristics, see \cite{K}.
We omit its proof, which is a short computation. 
   
\begin{lemma}
\label{lem:burgers}
 Given an analytic function $Q(z)$ and a function $u(x, y)$, then at points $(x, y)$ such that
$$Q(x - y u) + u - 1 = 0$$
$u$ also satisfies the complex Burgers equation
$$u u_x + u_y = 0 \;\;.$$

\end{lemma}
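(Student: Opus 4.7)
The plan is to prove the lemma by straightforward implicit differentiation of the defining relation $Q(x-yu)+u-1=0$. Writing $\xi = x-yu$ for brevity, I would first differentiate both sides with respect to $x$, treating $u=u(x,y)$ as a function of $(x,y)$, to obtain
\begin{equation*}
Q'(\xi)\,(1-y\,u_x) + u_x = 0,
\end{equation*}
which can be solved for $u_x$ as $u_x = Q'(\xi)/(y\,Q'(\xi)-1)$, assuming the denominator does not vanish at the point under consideration.

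Next I would differentiate the same relation with respect to $y$, again via the chain rule, to get
\begin{equation*}
Q'(\xi)\,(-u - y\,u_y) + u_y = 0,
\end{equation*}
so that $u_y = -u\,Q'(\xi)/(y\,Q'(\xi)-1)$.

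With these two expressions in hand, the verification of the PDE is immediate: substituting into $uu_x + u_y$ gives
\begin{equation*}
u\cdot\frac{Q'(\xi)}{y\,Q'(\xi)-1} - \frac{u\,Q'(\xi)}{y\,Q'(\xi)-1} = 0,
\end{equation*}
as desired. The only step requiring any care is the assumption that $y\,Q'(\xi)-1\ne 0$ at the point $(x,y)$; this is the condition that the implicit function theorem applies and that $u$ is actually differentiable there, so there is no real obstacle. Since the manipulations are purely algebraic and do not use analyticity of $Q$ beyond differentiability, the analyticity hypothesis is only needed later when one wants to interpret $u$ as a complex-analytic function of $(x,y)$ in order to apply the method of complex characteristics to the inviscid Burgers equation derived in \eqref{eq:34}.
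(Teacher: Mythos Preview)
Your proof is correct; it is precisely the short computation by implicit differentiation that the paper alludes to but omits. The paper simply states ``We omit its proof, which is a short computation,'' and your argument is the natural way to carry that out.
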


What this means is that if we can choose $Q$ such that the resulting function $u$ satisfies the correct boundary conditions (which depend on the limiting profile of $\lambda$), then we have solved our boundary value problem.

We have found solutions to this equation that agree with the results of the tangent method in various cases where the partition is very simple. In these cases the rescaled graph will be inside of the rectangle $R = [0, 2] \times [0, 1]$.
Using
$Q(z) := a/(z + d) + b -  c z$,
where $a, d, b, c \in \mathbb{R}$ are free parameters,
gives us some simple solutions to the Burgers equation.

To set our boundary conditions, we use numerical evidence and conjectures about 
 \begin{align*}
  \lim_{y \rightarrow 0} \arg(u(x, y)) \\
    \lim_{y \rightarrow 1} \arg(u(x, y)).
 \end{align*}
These should be related to the derivatives of the height function of the model. See Conjecture \ref{hei} in Section \ref{conclu}.

When $\lambda=(n,n-1,\ldots ,2,1)$ and $t=n$ and $n\rightarrow\infty$,
we use 
$$Q(z) = 1 - 1/(4 z) + z \;\;.$$
Solving for $u$ with Lemma \ref{lem:burgers}, after a simple coordinate change we obtain 
\begin{equation}
u(x, y) =      \frac{(x-1) y + \sqrt{(x-2) x + y^2}}{y^2-1}  \;\;.
\end{equation}
The arctic curve is given exactly by the boundary of the region $(x-2) x + y^2 \leq 0$.
So we get
$$(x-1)^2 + y^2 = 1$$
which is a semi circle of radius $1$ centered at $(1, 0)$, as the arctic circle.
See Figure \ref{staircase}.

When $\lambda=(n,n,\ldots ,n,n)$ and $t=n$ and $n\rightarrow\infty$, we can set
$$Q(z) = 1 - \frac{1}{z} + z \;\;.$$
Solving for $u$, we get
\begin{align*}
u(x, y) = \frac{-1 + x - 2 (-1 + x) y - \sqrt{(-1 + x)^2 - 4 y + 4 y^2}}{2 (y - y^2)}
\end{align*}
It is a mechanical check that $u$ indeed satisfies the Burgers equation. Also, the arctic curve is given by the equation
 $$(x-1)^2 - 4 y + 4 y^2 = 0$$
and this ellipse agrees with the arctic curve we observe in our simulations. See Figure \ref{square}.

This can be generalized to $\lambda=((p-1)n,(p-1)n,\ldots ,(p-1)n,(p-1)n)$ and t$=n$
and $n\rightarrow\infty$.
For $p \ge 2$, we get
$$u(x, y) =  \frac{1 + x + p (-1 + y) - 2 x y + 
    \sqrt{(1 + x)^2 + 2 p (1 + x) (-1 + y) + p^2 (-1 + y)^2 - 4 x y}}{2 (y - y^2)} \;\;.$$

We shall stress again that, although these results are conjectural,
they are supported by the fact that they match with our simulations presented in Section \ref{simu} and the computations done with the tangent method in Section \ref{tangent}.

\section{Conclusion, open problems and future work}
\label{conclu}

In this paper, we compute arctic curves for bounded lecture hall tableaux thanks to two methods:
the tangent method \cite{CS} using paths and the ansatz of \cite{KRS} using dimers. Both of these methods
are not fully rigorous. Nevertheless we conjecture that we find the true arctic curves as 
we find the same curves for our two running examples.

It would be interesting to study rigorously the dimer model on this lattice made of hexagons and
octagons. On this lattice (or the lecture hall graph) we can define a height function $h_n(x,y)$ on the faces of the graph.
Given a configuration of $n$ paths starting at $(n-i,t-1/(n-i+1)$
and ending at $(\lambda_i+n-i,0)$, the height of a face is the number of paths to the southwest of the face.
We give an example of the height function of Figure \ref{height} for $\lambda=(2,2)$.

\begin{figure}
  \centering
\begin{tikzpicture}
\BLHLL{3}3
\draw [red,ultra thick](0,2)--(1,2)--(1,3/2)--(2,4/3)--(2,0);
\draw [red,ultra thick](1,5/2)--(2,7/3)--(2,2)--(3,2)--(3,0);
\node at (0.5,0.5){0};
\node at (0.5,1.5){0};
\node at (1.5,1.25){0};
\node at (1.5,.25){0};
\node at (1.5,1.25){0};
\node at (1.5,2.25){1};
\node at (1.5,1.75){1};
\node at (1.5,0.75){0};
\node at (2.5,1/6){\tiny 1};
\node at (2.5,3/6){\tiny 1};
\node at (2.5,5/6){\tiny 1};
\node at (2.5,7/6){\tiny 1};
\node at (2.5,9/6){\tiny 1};
\node at (2.5,11/6){\tiny 1};
\node at (2.5,13/6){\tiny 2};
\node at (2.5,15/6){\tiny 2};
\end{tikzpicture}
\caption{Height function for paths of ${\mathcal G}_2$}
\label{height}
\end{figure}
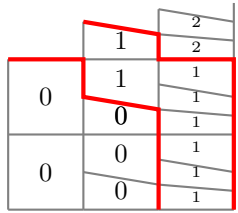

When $n\rightarrow \infty$ and $\lambda_i+n-i=n\alpha(i/n)$, 
let $h(x,y) = \lim h_n(x,y)/n$
be the limiting height function. 
Based on strong numerical evidence and on the structural similarities between this
dimer model and $\mathbb Z^2$
-periodic dimer models, we believe the following conjecture to be true.
\begin{conj}
The function $u(x,y)$ solution of the Burgers equation in Section \ref{dimer} satisfies
$$
Im(u) = \pi \frac{\partial h}{\partial y} 
$$
and
$$
arg(u) = \pi\left(\frac{\partial h}{\partial x}+1\right)
$$
for some branch of the arg.
\label{hei}
\end{conj}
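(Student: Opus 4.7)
The plan is to follow the Kenyon--Okounkov paradigm for $\mathbb{Z}^2$-periodic dimer models, suitably adapted to this non-translation-invariant setting, and to derive both identities from the asymptotics of the inverse Kasteleyn matrix via the ansatz introduced earlier in Section \ref{dimer}. First I would fix the scaling convention $(x,y) = (X/n, Y/n)$ and observe that by the bijection between paths and dimer configurations, the height function $h_n$ can be computed as a sum of indicator functions of edges along a dual lattice path from a reference face to the face $(x,y)$. Concretely, the increment of $h_n$ across an edge equals $1$ minus the indicator that the edge is covered by a dimer (with the appropriate sign depending on edge orientation). After passing to expectations and using the Kasteleyn--Kenyon formula $\mathbb{P}(e) = K(b,w)K^{-1}(w,b)$, the partial derivatives $\partial h/\partial x$ and $\partial h/\partial y$ are then expressed, in the limit $n\to\infty$, as limits of \emph{diagonal} entries of $K^{-1}$ multiplied by the local Kasteleyn weights on edges of the appropriate type.

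Next I would plug in the ansatz from Section \ref{dimer}: $K^{-1}((x,y),(x',y'))$ has the prescribed exponential form with phase $F$ satisfying $e^{-F_x} + F_y/x = 0$. The key analytic step is to extract the on-diagonal (coincident, $(x,y)=(x',y')$) asymptotics. Since $K^{-1}$ has been written as $\text{Re}[\,\exp(\cdots)\phi\,]$ with a rapidly oscillating phase, the standard dimer heuristic (as in \cite{KO06}) is that the diagonal value, properly averaged against the local Kasteleyn weights, produces the \emph{argument} of the complex characteristic at that point, divided by $\pi$. Concretely, I would show that for a ``horizontal'' edge contributing to $\partial h/\partial x$, the local density equals $\frac{1}{\pi}\arg(u(x,y))$ up to the additive constant $-1$ coming from the sign convention for $h$; and for a ``vertical'' edge contributing to $\partial h/\partial y$, the local density equals $\frac{1}{\pi}\mathrm{Im}(u(x,y))$. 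The appearance of $u = -wx$ rather than $w$ itself traces back to the non-uniform geometry of the lecture hall lattice (the factor of $x$ in equation \eqref{eqn:Feq}).

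To close the argument, I would verify the two displayed identities by showing that the Burgers solution $u$ and the height slopes satisfy compatible boundary conditions on $\partial R$, then invoke uniqueness of solutions to the complex Burgers equation with prescribed arc-argument boundary data (Lemma \ref{lem:burgers} and its accompanying implicit parameterization). On the frozen regions, $u$ is real and its argument is an integer multiple of $\pi$, which correctly matches a height function with integer slope $\partial_x h \in \{-1, 0\}$ and $\partial_y h = 0$; on the liquid region, $u$ lives in the upper half-plane (by the choice of branch), so $\arg u \in (0,\pi)$ and $\mathrm{Im}(u) > 0$, consistent with $\partial_x h \in (-1,0)$ and $\partial_y h > 0$. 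Combining the boundary behavior with the PDE, both $u$ and the pair $(\pi(\partial_x h + 1), \pi\partial_y h)$ solve the same characteristic equation via Lemma \ref{lem:burgers}, forcing them to coincide.

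The principal obstacle is making the ansatz for $K^{-1}$ rigorous: the derivation in Section \ref{dimer} assumes both that the power series in $\epsilon$ converges and that the finite-difference equation extends from the real part to the full complex expression. The toy case $\lambda = (\kappa n, 0^{n-1})$ where $K^{-1}$ is computable in closed form via path counts provides supporting evidence, but the general case requires a steepest-descent analysis of the exact $K^{-1}$, which in the lecture hall setting is complicated by the lack of $\mathbb{Z}^2$-invariance (and hence the unavailability of Fourier methods). A possible route is to write $K^{-1}$ as a ratio of path-counting polynomials using a Lindstr\"om--Gessel--Viennot type expansion, then apply saddle-point asymptotics following the strategy of \cite{KRS}; controlling the uniform convergence on compact subsets of the liquid region is the technical heart of the argument.
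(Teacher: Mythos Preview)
The statement you are attempting to prove is explicitly labeled a \emph{conjecture} in the paper, and the paper offers no proof whatsoever. The only justification given is ``strong numerical evidence and \dots structural similarities between this dimer model and $\mathbb{Z}^2$-periodic dimer models.'' So there is no proof in the paper to compare your attempt against: you are trying to prove an open problem.

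Your outline is a sensible research program rather than a proof, and you are candid about this in your final paragraph. The strategy of reading off height gradients from diagonal asymptotics of $K^{-1}$ via the Kenyon--Okounkov mechanism is the natural one, and you correctly locate the central obstruction: the ansatz of Section~\ref{dimer} for $K^{-1}$ is itself heuristic, so every downstream step inherits that non-rigor. Two further points are worth flagging. First, the ansatz is only derived for $(x,y)\neq(x',y')$, and the passage to on-diagonal asymptotics (which is where edge probabilities live) is exactly the delicate step even in the $\mathbb{Z}^2$-periodic case; you gloss this with ``standard dimer heuristic,'' but there is no established analogue here. Second, and more substantively, the conjecture pairs $\arg(u)$ with $\partial_x h$ but $\mathrm{Im}(u)$ with $\partial_y h$. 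This asymmetry is \emph{not} what the usual Kenyon--Okounkov picture produces (there both slopes come from arguments of the spectral variables), and it reflects the anisotropic, non-periodic geometry of the lecture hall lattice---in particular the factor of $x$ in \eqref{eqn:Feq} and the fact that vertical steps at abscissa $x$ have length $\sim 1/x$ rather than a fixed lattice spacing. Your sketch does not account for why one coordinate yields an imaginary part rather than an argument; invoking ``the standard dimer heuristic (as in \cite{KO06})'' does not cover this, and a genuine proof would have to explain it.
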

When we impose that the hexagons and the octagons  have all the same area and shape, we get
a non-planar lattice made of fans of hexagons separated by line of octagons. This observation is due to N. Reshetikhin.
On Figure \ref{hexaocto}, we draw one fan of hexagons and the its line of octagons.

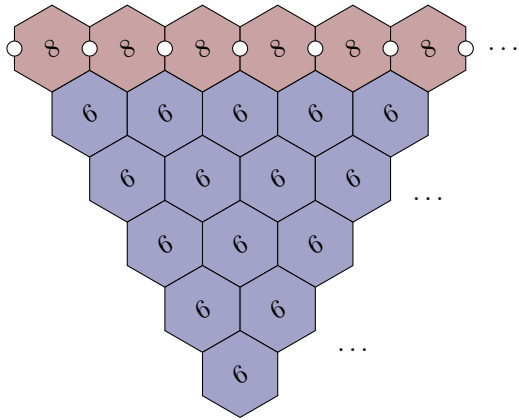
\begin{figure}
\begin{tikzpicture}
 \tikzset{hexa/.style= {shape=regular polygon,regular polygon sides=6,minimum size=1.15470053838cm, draw,inner sep=0,fill=lightgray!85!blue,rotate=30}}
 \tikzset{octagon/.style= {shape=regular polygon,regular polygon sides=6,minimum size=1.15470053838cm, draw,inner sep=0,fill=lightgray!85!red,rotate=30}}

\foreach \j in {1,...,5}{%
\pgfmathtruncatemacro\end{5- \j} 
  \foreach \i in {0,...,\end}{%
  \pgfmathsetmacro\x{\i + (\j * 0.5) + 4} 
    \pgfmathsetmacro\y{-\j * sin(60)} 
  \pgfmathtruncatemacro\k{\j}  
  \node[hexa] (h6\i;\k) at ({\x},{\y}) {6};}  }

\foreach \i in {0,...,5}{%
\pgfmathtruncatemacro\x{\i + 4} 
\pgfmathtruncatemacro\y{0}   
\node[octagon] (h5\i) at ({\x},{\y}) {8}; 
}

\node at (10,0){$\ldots$};
\node at (9,-2){$\ldots$};
\node at (8,-4){$\ldots$};

\foreach \i in {-1,...,5}{%
\pgfmathtruncatemacro\x{\i + 4} 
\pgfmathtruncatemacro\y{0}   
\draw [fill=white] (\x + 0.5,0.0) circle [radius=0.1];
}

\end{tikzpicture}

\caption{Another way of drawing a horizontal strip of the lecture hall lattice}
\label{hexaocto}
\end{figure}

We could also compute the asymptotic behavior of the lecture hall tableaux without the bounded condition. In this case we have 
an infinite number of tableaux. When $q<1$, we can compute the generating function of lecture hall tableaux
of shape $\lambda$ where each tableau gives the contribution $q^{|T|}$ where $|T|=\sum_{i,j} T_{i,j}$.
In \cite{CK}, Corteel and Kim computed the corresponding generating function which is~:
$$
\prod_{1\le i<j\le n} \frac{q^{\lambda_j+n-j}-q^{\lambda_i+n-i}}{q^{i-1}-q^{j-1}}
\prod_{i=1}^n \frac{(-q^{n-i+1})_{\lambda_i}}{(q^{2n-i+1})_{\lambda_i}}.
$$
with $(a)_k=\prod_{i=0}^{k-1}(1-aq^i)$.
The fact that this generating function  has a beautiful product formula makes us think that we could build the right algebraic tools to study the asymptotics of these unbounded tableaux. 
In \cite{DG1}, the authors consider the case of $q$-weighted lozenge tilings for which a single path will asymptotically travel along a geodesic (not necessarily a straight line). It would be interesting to consider the same for lecture hall tableaux.
It would also be really interesting to define a  ``lecture hall 
Schur process''. See for example \cite{OR03} for the classical case
and all the follow up papers.

Last but not least, the tangent method could be made rigorous in our case \cite{Aggarwal19}
at least for the computation of some parts of the curves (i.e. the ones that correspond to the trajectory of the first or the last path). This would require some detailed computations.

\end{document}